\theoremstyle{definition}
\newtheorem{prop}{Proposition}[section]
\theoremstyle{definition}
\newtheorem{thm}[prop]{Theorem}
\theoremstyle{definition}
\newtheorem{lem}[prop]{Lemma}
\theoremstyle{definition}
\newtheorem{lemma}[prop]{Lemma}
\theoremstyle{definition}
\newtheorem{cor}[prop]{Corollary}
\theoremstyle{definition}
\newtheorem{conj}[prop]{Conjecture}
\theoremstyle{definition}
\newtheorem{exa}[prop]{Example}
\theoremstyle{definition}
\newtheorem{rem}[prop]{Remark}
\theoremstyle{definition}
\newtheorem{defn}[prop]{Definition}
\theoremstyle{definition}
\newcommand{\concat}{\mathsf{concat}}
\newcommand{\sgn}{\operatorname{sgn}}
\newcommand{\NN}{\mathbb{N}}
\newcommand{\rect}{\mathsf{rect}}
\newcommand{\rev}{\mathsf{rev}}
\newcommand{\wt}{\mathsf{wt}}
\newcommand{\bumpwt}{\mathsf{wt}^{\mathsf{bump}}}
\newcommand{\crosswt}{\mathsf{wt}^{\mathsf{cross}}}
\newcommand{\bumpchi}{\chi^{\mathsf{bump}}}
\newcommand{\crosschi}{\chi^{\mathsf{cross}}}
\newcommand{\Z}{\mathbb{Z}}
\newcommand{\ZZ}{\mathbb{Z}}
\newcommand{\fG}{\mathfrak{G}}
\newcommand{\fL}{\mathfrak{L}}
\newcommand{\RPD}{\mathsf{RPD}}
\newcommand{\cC}{\mathcal{C}}
\newcommand{\cD}{\mathcal{D}}
\newcommand{\cP}{\mathcal{P}}
\newcommand{\BB}{\mathbb{B}}
\renewcommand{\SS}{\mathbb{S}}
\newcommand{\One}{1\hspace{-1.25mm}1}
\newcommand{\weight}{\mathrm{wt}}
\newcommand{\quand}{\quad\text{and}\quad}
\newcommand{\qquand}{\qquad\text{and}\qquad}
\newcommand{\e}{\mathbf{e}}
\newcommand{\insss}[2]{(#1 \xrightarrow{H} #2)}
\newcommand{\Tab}{\mathsf{tab}}
\newcommand{\TT}{P_{\mathsf{hecke}}}
\newcommand{\TQ}{Q_{\mathsf{hecke}}}
\newcommand{\HW}{\mathsf{HW}}
\newcommand{\ch}{\mathsf{ch}}
\newcommand{\ben}{\begin{enumerate}}
\newcommand{\een}{\end{enumerate}}
\newcommand{\be}{\begin{equation}}
\newcommand{\ee}{\end{enumerate}}
\newcommand{\PP}{\Z_{>0}}
\newcommand{\col}{\mathsf{col}}
\newcommand{\row}{\mathsf{row}}
\newcommand{\revrow}{\mathsf{revrow}}
\newcommand{\SD}{\mathsf{SD}}
\newcommand{\D}{\mathsf{D}}
\newcommand{\cH}{\mathcal{H}}
\newcommand{\DHF}{\mathsf{Decr}}
\newcommand{\svword}{\mathsf{svword}}
\newcommand{\ytabb}[1]{
\ytableausetup{boxsize = 0.5cm,aligntableaux=center}
{\begin{ytableau}  #1  \end{ytableau}}
}
\newcommand{\ptile}{
\begin{tikzpicture}[x=1em,y=1em,thick,color = blue]
\draw[step=1,gray,thin] (0,0) grid (1,1);
\draw[color=black, thick, sharp corners] (0,0) rectangle (1,1);
\draw(0.5,1.0)--(0.5,0.0);
\draw(1.0,0.5)--(0.0,0.5);
\end{tikzpicture}}
\newcommand{\bumptile}{
\begin{tikzpicture}[x=1em,y=1em,thick,rounded corners,color = blue]
\draw[step=1,gray,thin] (0,0) grid (1,1);
\draw[color=black, thick, sharp corners] (0,0) rectangle (1,1);
\draw(1.0,0.5)--(0.5,0.5)--(0.5,0.0);
\draw(0.5,1.0)--(0.5,0.5)--(0.0,0.5);
\end{tikzpicture}}
\newcommand{\ytab}[1]{
\ytableausetup{boxsize = .4cm,aligntableaux=center}
{\begin{ytableau}  #1  \end{ytableau}}
}
\definecolor{darkblue}{rgb}{0.0,0,0.7} 
\definecolor{darkred}{rgb}{0.7,0,0} 
\definecolor{darkgreen}{rgb}{0, .6, 0} 
\newcommand{\definition}[1]{{\color{darkred}\emph{#1}}} 
\newcommand{\cB}{\mathcal{B}}
\newcommand{\sqgln}{\sqrt{\mathfrak{gl}_n}}
\newcommand{\GP}{G\hspace{-0.2mm}P}
\newcommand{\GPdec}{\GP^{[\mathsf{dec}]}}
\newcommand{\ba}{\begin{aligned}}
\newcommand{\ea}{\end{aligned}}
\newcommand{\barr}{\begin{array}}
\newcommand{\earr}{\end{array}}
\def\gl{\mathfrak{gl}}
\def\q{\mathfrak{q}}
\newcommand{\SVT}{\mathsf{SetTab}}
\newcommand{\SVDT}{\mathsf{SetDecTab}}
\newcommand{\SVWordsss}[2]{\mathsf{SetWord}_{#2,#1}}
\newcommand{\IncT}[2]{\mathsf{IncT}_{#2,#1}}
\newcommand{\minlevel}{\operatorname{minlevel}}
\def\gamma{\svword}
\def\fkD{{\mathfrak D}}
\numberwithin{equation}{section}
\begin{document}

\title{Grothendieck positivity for normal square root crystals}
\author{
    Eric MARBERG
    \\
    HKUST \\
    {\tt emarberg@ust.hk}
    \and
    Kam Hung TONG\\
    Hong Kong Polytechnic University \\
    {\tt kam-hung-terry.tong@polyu.edu.hk}
    \and
    Tianyi YU 
    \\
    Universit\'e du Qu\'ebec \`a Montr\'eal\\
    {\tt yu.tianyi@uqam.ca}
}

\date{}

\maketitle

\begin{abstract} 
Normal crystals (also known as Stembridge crystals) are commonly used to establish the Schur positivity of symmetric functions, as their characters are sums of Schur polynomials. In this paper, we develop a combinatorial framework for a novel family of objects called normal square root crystals, which are closely related to symmetric Grothendieck functions, the $K$-theoretic analogue of Schur functions. Among other applications, this tool leads to a new proof of Buch's combinatorial rule for the multiplication of symmetric Grothendieck functions. The definition of a normal square root crystal, originally formulated by the first two authors, largely mirrors that of normal crystals. Our main result is to show that the character of such a crystal is always a sum of symmetric Grothendieck polynomials. The proof relies on an unexpected connection between the raising operators for our crystals and the Hecke insertion algorithm developed by Buch, Kresch, Shimozono, Tamvakis, and Yong.
\end{abstract}

\tableofcontents

\section{Introduction}

\subsection{Overview}

Crystals are combinatorial objects that arise in the representation theory of quantum groups. They were first studied in independent work of Kashiwara \cite{Kashiwara1990,Kashiwara1991} and Lusztig \cite{Lusztig1990a,Lusztig1990b}; for a relevant history, see \cite[\S1]{BumpSchilling}.
A theory of abstract crystals exists for any finite-dimensional Lie algebra,
but in this paper we focus on two families of crystals associated to type $\gl_n$ for a fixed positive integer $n$.

In our setting, a \definition{crystal} consists of a directed graph $\cB$ with edges labeled by indices in the set $\{1,2,\dots,n-1\}$,
along with a weight map $\weight : \cB \to \ZZ^n$ assigning an integer $n$-tuple to each vertex. This data must obey some additional axioms, as explained in Section~\ref{crystal-sect}.
When the vertex set $\cB$ 
is finite, the crystal has a polynomial \definition{character} provided by the weight-generating function
\begin{equation} \textstyle \ch(\cB) := \sum_{b \in \cB} x^{\weight(b)} \in \Z[x_1,x_2,\dots,x_n]
\quad\text{where $x^{(w_1,w_2,\dots,w_n)} := x_1^{w_1} x_2^{w_2}\cdots x_n^{w_n}$.}
\end{equation}
Isomorphisms between crystals correspond to graph isomorphisms that preserve edge labels and vertex weights. Isomorphic crystals therefore have the same character, but the converse is not always true.

As mentioned above, we will consider two families of crystals here, which obey slightly different axioms. 
Each family has a distinguished \definition{standard crystal}.
Moreover, the two families are equipped with the same \definition{tensor product $\otimes$}, an associative operator that 
takes two given crystals $\cB$ and $\cC$
and forms a new crystal $\cB\otimes \cC$ in the same family.
This operator satisfies $\ch(\cB\otimes \cC) = \ch(\cB)\ch(\cC)$ when $\cB$ and $\cC$ are finite.

Our first family consists of all \definition{(seminormal) $\gl_n$-crystals}, following the conventions in \cite{BumpSchilling}. For this well-studied type of crystal, the standard object (which we call the \definition{standard $\gl_n$-crystal})
is the path graph presented in Figure~\ref{gl-fig}. This crystal corresponds to the vector representation of the quantum group $U_q(\gl_n)$, and its vertices are labeled by the numbers $1, 2,\dots, n$.
Our second family is made up of the \definition{square root crystals} (abbreviated as \definition{$\sqgln$-crystals})
introduced much more recently in \cite{MT,Y}. For this type, the standard object (which we call
the \definition{standard $\sqgln$-crystal}),
is a certain graph on all non-empty subsets of $[n] :=\{1,2,\dots,n\}$,
as shown in Figure~\ref{SS3-fig} for the case $n=3$.   
For the precise definitions of these crystals and their tensor product; see Section~\ref{crystal-sect}.

 We are specifically interested in the \definition{normal} 
 crystals within these two families.
A $\gl_n$-crystal (respectively, $\sqgln$-crystal) is \definition{normal} if it is isomorphic to a disjoint union of connected components of tensor powers of the standard $\gl_n$-crystal (respectively, the standard $\sqgln$-crystal). 
Normal $\gl_n$-crystals have many remarkable features. They precisely correspond to crystal bases of quantum group representations \cite{BumpSchilling}. They can also be characterized by a 
 set of local \definition{Stembridge axioms} \cite{Stembridge} and for this reason are sometimes called \definition{Stembridge crystals}.
 
In addition, there is a close connection between normal $\gl_n$-crystals and positivity properties of certain symmetric functions.
Our main result will demonstrate a similar phenomenon for normal $\sqgln$-crystals.
To explain this, recall that
the subring of symmetric elements in $\ZZ[x_1,x_2,\dots,x_n]$
has two well-known $\ZZ$-bases given by the 
\definition{Schur polynomials} $s_\lambda(x_1,x_2,\dots,x_n)$
and \definition{(signless) symmetric Grothendieck polynomials} $G_\lambda(x_1,x_2,\dots,x_n)$.
These bases are both indexed by integer partitions $\lambda = (\lambda_1\geq \lambda_2 \geq \dots \geq \lambda_n\geq 0)$ with at most $n$ parts; see Section~\ref{sym-intro} for the definitions.

In the literature, the terms ``symmetric Grothendieck polynomial'' and ``stable Grothendieck polynomial'' often refer to the function
$\beta^{-|\lambda|}G_\lambda(\beta x_1,\beta x_2,\dots,\beta x_n)$ where $\beta$ is an extra parameter. 
When one sets $\beta=-1$,
these polynomials are representatives for the $K$-theory classes of Schubert varieties in the complex Grassmannian \cite[Thm.~8.1]{Buch2002}, while Schur polynomials are representatives for the cohomology classes.
Schur polynomials and symmetric Grothendieck polynomials have many other interpretations in representation theory and geometry. 

Because of these interpretations, it is often meaningful to prove that generating functions are \definition{Schur positive} (respectively, \definition{$G$-positive}) in the sense of being equal to a $\NN$-linear combination of Schur polynomials (respectively, symmetric Grothendieck polynomials).
When this property holds, one is further interested in finding positive combinatorial formulas for the coefficients.

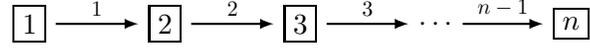
\begin{figure}
\[
    \begin{tikzpicture}[xscale=1.8, yscale=1,>=latex,baseline=(z.base)]
    \node at (0,0.0) (z) {};
      \node at (0,0) (T0) {$\boxed{1}$};
      \node at (1,0) (T1) {$\boxed{2}$};
      \node at (2,0) (T2) {$\boxed{3}$};
      \node at (3,0) (T3) {${\cdots}$};
      \node at (4,0) (T4) {$\boxed{n}$};
      \draw[->,thick]  (T0) -- (T1) node[midway,above,scale=0.75] {$1$};
      \draw[->,thick]  (T1) -- (T2) node[midway,above,scale=0.75] {$2$};
      \draw[->,thick]  (T2) -- (T3) node[midway,above,scale=0.75] {$3$};
      \draw[->,thick]  (T3) -- (T4) node[midway,above,scale=0.75] {$n-1$};
     \end{tikzpicture}
\]
\caption{The \definition{standard $\gl_n$-crystal}, 
for which the weight map is $\weight(\boxed{i})=\e_i\in\ZZ^n$.
}\label{gl-fig}
\end{figure}

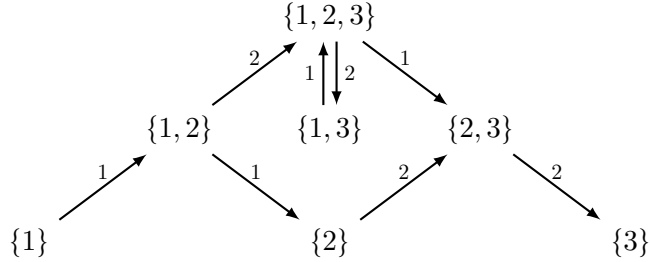
\begin{figure}[h]
\centerline{
\begin{tikzpicture}[xscale=2, yscale=0.75,>=latex,baseline=(z.base)]
    \node at (0,0.0) (z) {};
      \node at (0,0) (T1) {$\{1\}$};
      \node at (1,2) (T12) {$\{1,2\}$};
      \node at (2,0) (T2) {$\{2\}$};
      \node at (3,2) (T23) {$\{2,3\}$};
      \node at (4,0) (T3) {$\{3\}$};
      \node at (2,4) (T123) {$\{1,2,3\}$};
      \node at (2,2) (T13) {$\{1,3\}$};
      \draw[->,thick,blue]  (T1) -- (T12) node[midway,above,scale=0.75] {$1$};
      \draw[->,thick,blue]  (T12) -- (T2) node[midway,above,scale=0.75] {$1$};
      \draw[->,thick,red]  (T2) -- (T23) node[midway,above,scale=0.75] {$2$};
      \draw[->,thick,red]  (T23) -- (T3) node[midway,above,scale=0.75] {$2$};
      \draw[->,thick,red]  (T12) -- (T123) node[midway,above,scale=0.75] {$2$};
      \draw[->,thick,red]  (T123.285) -- (T13.75) node[midway,right,scale=0.75] {$2$};
      \draw[->,thick,blue]  (T13.105) -- (T123.255) node[midway,left,scale=0.75] {$1$};
      \draw[->,thick,blue]  (T123) -- (T23) node[midway,above,scale=0.75] {$1$};
     \end{tikzpicture}
}
     \caption{The standard $\sqrt{\gl_3}$-crystal, for which the weight map is $\weight(S) = \sum_{i \in S} \e_i \in \ZZ^3$.}\label{SS3-fig}
     \end{figure}

An element of $\gl_n$- or $\sqgln$-crystal is \definition{highest weight} if it is a source vertex in the associated crystal graph. 
Let $\HW(\cB)$ denote the set of highest weight elements in a crystal $\cB$.
When $\cB$ is normal, the weight of every 
element of $\HW(b)$ is guaranteed to be a partition with at most $n$ parts; see \cite[\S2.4]{BumpSchilling} for the $\gl_n$-case and \cite[Lem.~4.11]{MT} for the $\sqgln$-case.

It is known \cite[Thms.~3.2 and 8.6]{BumpSchilling}
that if $\cB$ is a finite normal $\gl_n$-crystal, then
\begin{equation}
    \textstyle \ch(\cB) = \sum_{b \in \HW(\cB)} s_{\weight(b)}(x_1,x_2,\dots,x_n).
\end{equation}
Via this identity, normal $\gl_n$
crystals can be a useful tool for demonstrating Schur positivity: given a generating function over some discrete set of objects, one just needs to give this set an appropriate crystal structure.
For examples of this approach, see \cite{HawScr,MorseSchilling}.

Our main result demonstrates a similar property for the characters of normal $\sqgln$-crystals, which was predicted by the first two authors as \cite[Conj.~4.37]{MT}:

\begin{thm}\label{ch-thm}
If $\cB$ is a finite normal $\sqgln$-crystal
then 
$\textstyle \ch(\cB) = \sum_{b \in \HW(\cB)} G_{\weight(b)}(x_1,x_2,\dots,x_n).$
\end{thm}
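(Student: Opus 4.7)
The plan is to reduce the theorem to the analysis of a single connected component. By the definition of normality, $\cB$ decomposes as a disjoint union of connected components, each isomorphic to a connected component of some tensor power $\SS^{\otimes N}$ of the standard $\sqgln$-crystal $\SS$. Since characters and highest weight sets both add over disjoint unions, it suffices to prove that for a single connected component $\cC$ of $\SS^{\otimes N}$, we have $\ch(\cC) = G_\lambda$ where $\lambda = \weight(b)$ for the distinguished highest weight element $b$ of $\cC$. In particular, we must also show that each connected component contains exactly one highest weight element.

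An element of $\SS^{\otimes N}$ is naturally a sequence $(S_1, S_2, \dots, S_N)$ of nonempty subsets of $[n]$, whose weight is $\sum_i \sum_{j \in S_i} \e_j \in \ZZ^n$. The bridge between the crystal structure and the symmetric Grothendieck polynomials is the Hecke insertion algorithm of Buch--Kresch--Shimozono--Tamvakis--Yong, adapted so as to accept set-valued input. This produces a weight-preserving bijection between sequences in $\SS^{\otimes N}$ and pairs $(\TT, \TQ)$ consisting of an increasing tableau $\TT$ and a set-valued recording tableau $\TQ$ of the same shape, in such a way that the weight of the input sequence equals the content of $\TQ$.

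The central step --- and the main obstacle --- is to establish the unexpected compatibility between the crystal raising and lowering operators on $\SS^{\otimes N}$ and Hecke insertion: namely, that two sequences lie in the same connected component if and only if they produce the same insertion tableau $\TT$. This compatibility is delicate because the operators act on set-valued data, so the necessary local analysis of how an $e_i$ or $f_i$ modifies $\TQ$ while leaving $\TT$ fixed is more intricate than in the classical normal $\gl_n$ setting. Granting this, each connected component $\cC$ is parametrized via $\TQ$ by the set of set-valued semistandard tableaux of a fixed shape $\lambda$ (the shape of the common insertion tableau), and has exactly one highest weight element, which corresponds to the canonical $\TQ$ of shape $\lambda$ recording the identity sequence.

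It then follows that $\ch(\cC) = \sum_\TQ x^{\wt(\TQ)}$, where the sum runs over set-valued semistandard tableaux of shape $\lambda$. By the standard combinatorial definition of the symmetric Grothendieck polynomials, this is exactly $G_\lambda(x_1, \dots, x_n)$, and moreover $\lambda = \weight(b)$ for the unique highest weight $b \in \cC$. Summing over the connected components of $\cB$, equivalently over $b \in \HW(\cB)$, yields the desired identity $\ch(\cB) = \sum_{b \in \HW(\cB)} G_{\weight(b)}(x_1, \dots, x_n)$.
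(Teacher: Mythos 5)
Your reduction to connected components of $\SS_n^{\otimes N}$ and the use of Hecke insertion as a weight-preserving bijection onto pairs $(\TT,\TQ)$ are both in the spirit of the paper, but the step you call central --- ``two sequences lie in the same connected component if and only if they produce the same insertion tableau $\TT$'', together with its consequence that each component has exactly one highest weight element --- is false, not merely unproved. Unlike the $\gl_n$ case, there exist \emph{connected} normal $\sqgln$-crystals with several highest weight elements (see \cite[\S4.4]{MT}, as noted in Section~\ref{crystal-sect}); by the definition of normality such a crystal is a full subcrystal of some $\SS_n^{\otimes m}$, so some connected component of a tensor power of the standard crystal contains two highest weight elements $b\neq b'$. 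Since highest weight elements satisfy $\TT(b)=\Tab(b)$ and $\Tab$ is injective, the insertion tableau is not constant on that component; moreover its character must be $G_{\weight(b)}+G_{\weight(b')}+\cdots$, which is not a single $G_\lambda$, so no parametrization of the whole component by set-valued tableaux of one fixed shape can exist. Your argument as written would prove the wrong formula $\ch(\cC)=G_\lambda$ on such a component. (Separately, even where your claim is salvageable you only say ``granting this''; that compatibility is the actual content of the theorem, not a routine adaptation of the classical RSK fact.)

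The correct replacement, and the route the paper takes, is strictly weaker than constancy of $\TT$ on components: one constructs the rectification operator $\rect$ (a fixed composition of the full raising operators $E_i$) and proves two nontrivial facts --- that $\rect$ always lands on a highest weight element of the same component (Theorem~\ref{rect-thm}, which needs normality-specific lemmas such as Lemma~\ref{eE-lem}), and that $\TT(S)=\Tab(\rect(S))$ (Theorem~\ref{P: Insertion = rect}, proved via a careful column-by-column analysis of Hecke insertion, Lemmas~\ref{hecke-lem} and \ref{tech-lem}). Consequently the set of insertion tableaux arising from a full subcrystal $\cB$ is exactly $\{\Tab(b): b\in\HW(\cB)\}$, and Hecke insertion restricts to a bijection from $\cB$ onto pairs $(P,Q)$ with $P$ in this set and $Q$ semistandard set-valued (Theorem~\ref{main-thm}); summing $x^{\wt(Q)}$ over the fibre of each such $P$ gives one $G_{\weight(b)}$ per highest weight element, which is the statement of Theorem~\ref{ch-thm}. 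To repair your proposal you would need to abandon the uniqueness claim and supply these two ingredients (or equivalents) in its place.
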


The proof of this theorem relies on an unexpected connection between highest weight elements in normal square root crystals and the \definition{Hecke insertion algorithm} from \cite{BKSTY}; see
Section~\ref{gp-sect}.

Theorem~\ref{ch-thm} gives a way of using square root crystals to prove Grothendieck positivity properties. The rest of this introduction outlines some applications along these lines. These sample applications include new proofs of known results from \cite{Buch2002,BKSTY}, as well as one new identity.

\subsection{Expanding skew symmetric Grothendieck functions}\label{sym-intro}

Combined with some prior results in \cite{MT,Y},
Theorem~\ref{ch-thm} leads to a quick proof of Buch's combinatorial rule for the positive $G$-expansion of any 
\definition{skew symmetric Grothendieck polynomial}. We recall the definition of this generalization of $G_\lambda(x_1,x_2,\dots,x_n)$ below.

The \definition{Young diagram} of a  partition $\lambda=(\lambda_1\geq \lambda_2\geq \dots \geq 0)$ is the set of boxes 
\begin{equation}\label{D-eq}
    \D_\lambda := \{(i,j) \in \PP\times \PP : 1\leq j \leq \lambda_i\}.
\end{equation}
If $\D_\lambda \subseteq \D_\nu$ 
then one writes 
$\lambda\subseteq \nu$
and defines $\D_{\nu/\lambda} := \D_\nu \setminus \D_\lambda$.

A \definition{set-valued tableaux} of shape $\nu/\lambda$
is a filling of the boxes in $\D_{\nu/\lambda}$ by non-empty subsets of $[n]$.
The \definition{distributions} of such a tableau are formed by replacing each set-valued entry by one of its elements.
A set-valued tableau $T$ is \definition{semistandard} if
all of its distributions have weakly increasing rows and strictly increasing columns.
For example,
\begin{equation}\label{t-ex}
\ytableausetup{boxsize=0.7cm}
T\ =\ \begin{ytableau}
\none & 134 & 45 & 5 \\
45 & 5\\
89
\end{ytableau}
\end{equation}
is a semistandard set-valued tableau of shape $\nu/\lambda=(4,2,1)/(1)$ drawn in English notation.
If $T$ is a set-valued tableau then we write $(i,j) \in T$ to indicate that box $(i,j)$ is filled in $T$, and we let $T_{ij}$ denote the entry in this box. For example, if $T$ is as in \eqref{t-ex} then $T_{22}= \{5\}$ and $T_{31} = \{8,9\}$.

Let $\SVT_n(\nu/\lambda)$ denote the family of all semistandard set-valued tableaux of shape $\nu/\lambda$, and write $\SVT_n(\nu)$ in place of $\SVT_n(\nu/\emptyset)$
Define also the \definition{weight} of $T$ to be 
\begin{equation}
    \label{t-wt}
\textstyle \weight(T) := \sum_{(i,j) \in T} \sum_{k \in T_{ij}} \e_k \in \ZZ^n
\end{equation}
where $\e_1,\e_2,\dots,\e_n$ is the standard basis of $\ZZ^n$. 
The weight of the tableau in \eqref{t-ex} when $n=9$ is $\weight(T) = (1,0,1,3,4,0,0,1,1)$ .

Let $x_1,x_2,\dots,x_n$ be commuting variables and set $x^\alpha= x_1^{\alpha_1}x_2^{\alpha_2}\cdots x_n^{\alpha_n}$ for any tuple $\alpha 
\in \ZZ^n$.

\begin{defn} If $\lambda\subseteq\nu$ are partitions then let
$
G_{\nu/\lambda}(x_1,x_2,\dots,x_n)=
\sum_{T \in \SVT_n(\nu/\lambda)} x^{\weight(T)}.
$\
\end{defn}

When $\lambda=\emptyset$ is the empty partition, the \definition{skew symmetric Grothendieck polynomial}
defined in this way gives $G_{\nu}(x_1,x_2,\dots,x_n)$ from the previous section.
One may then define $s_\nu(x_1,x_2,\dots,x_n)$ as the homogeneous term of $G_\nu(x_1,x_2,\dots,x_n)$ of lowest degree.

If $T$ is a set-valued tableau with $m$ boxes, then its \definition{column reading word}
is the $m$-tuple of sets $\col(T)$
obtained by reading the entries $T$ up each column (in English notation), iterating over the columns from left to right. 
For any tuple of sets of integers $S=(S_1,S_2,\dots,S_m)$, let $w(S)$ be the word 
formed 
by replacing each $S_i$ by its entries in increasing order.
If $T$ is the tableau in \eqref{t-ex} then \[
\col(T) = (\{8,9\}, \{4,5\},\{5\},\{1,3,4\},\{4,5\},\{5\})
\quand
w(\col(T)) = 
89455134455.\]
Finally, following \cite{Buch2002}, we define a positive integer sequence $w=w_1w_2\cdots w_q$ to be a \definition{reverse lattice word} if the integer vector $\weight(w_iw_{i+1}w_{i+2}\cdots w_q)$ is weakly decreasing for each $i \in [q]$,
where
$\weight(w) := \sum_{i=1}^q \e_{w_i}$
The word $89455134455$ does not have this property, but $45132431121$ is an example.

The following result is implicit in \cite{MT,Y}, but we present it here for future reference.

\begin{thm}\label{svt-thm}
The set $\SVT_n(\nu/\lambda)$ has a normal $\sqgln$-crystal structure
    with weight map \eqref{t-wt} such that $T\in\SVT_n(\nu/\lambda)$
    is highest weight if and only if $w(\col(T))$ is a reverse lattice word.
\end{thm}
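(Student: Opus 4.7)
The plan is to derive Theorem~\ref{svt-thm} by combining results already developed in \cite{MT} and \cite{Y}, organizing the argument into three steps: transport the $\sqgln$-crystal structure to $\SVT_n(\nu/\lambda)$ via the column reading map, verify normality, and then interpret the highest weight condition combinatorially.

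First, I would embed $\SVT_n(\nu/\lambda)$ into $\BB^{\otimes m}$, where $\BB$ denotes the standard $\sqgln$-crystal (whose vertex set is the collection of non-empty subsets of $[n]$) and $m$ is the number of boxes of $\nu/\lambda$. The embedding sends $T \mapsto \col(T) = (S_1, S_2, \ldots, S_m)$, which is weight-preserving by \eqref{t-wt} since the weight of $\col(T)$ as a tensor product element is exactly $\sum_j \sum_{k \in S_j} \e_k$. One then defines the crystal operators on $\SVT_n(\nu/\lambda)$ by pullback along $\col$, once it is verified that the image of $\col$ is closed under the raising and lowering operators of $\BB^{\otimes m}$. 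This closure---namely, that applying a crystal operator to the column reading word of a semistandard set-valued tableau yields the column reading word of another such tableau of the same shape---is exactly the content of the constructions in \cite{MT, Y}. The passage from straight to skew shapes is formal, as the crystal operators modify subset entries inside fixed boxes without altering the underlying shape.

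Second, because $\SVT_n(\nu/\lambda)$ is realized in this way as a subcrystal of $\BB^{\otimes m}$ consisting of a union of connected components, it is normal by definition. Third, $T$ is highest weight if and only if every raising operator annihilates $T$, equivalently $\col(T)$ is a highest weight element of $\BB^{\otimes m}$. The main step is to translate this vanishing---governed by the tensor product signature rule for $\sqgln$-crystals---into the reverse lattice condition on the flattened word $w(\col(T))$. Expanding each $S_j$ in increasing order to form $w(\col(T))$ and unwinding the signature rule, the condition that every raising operator kills $\col(T)$ collapses (as in the classical $\gl_n$ setting) to the requirement that at every suffix of $w(\col(T))$ the count of $i$ dominates the count of $i+1$. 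This is precisely the reverse lattice word condition from \cite{Buch2002}.

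The main obstacle is verifying closure of $\col(\SVT_n(\nu/\lambda))$ under the $\sqgln$ crystal operators and confirming that the $\sqgln$ signature rule, despite differing from the $\gl_n$ version at the level of individual subset vertices, still collapses to the usual Yamanouchi condition on $w(\col(T))$. Both verifications reduce to statements already established in \cite{MT, Y} for straight shapes, so the full proof amounts to unpacking those results in the present framework.
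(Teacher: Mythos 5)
Your proposal is correct and follows essentially the same route as the paper: transporting the crystal structure to $\SVT_n(\nu/\lambda)$ via the column reading word into the $m$-fold tensor power of the standard $\sqgln$-crystal, citing the closure results of \cite[\S4]{Y} for normality, and invoking \cite[Lem.~4.11, Thm.~4.17]{MT} (equivalently, the signature-rule analysis you sketch) to identify the highest weight elements with tableaux whose flattened column word is a reverse lattice word.
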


\begin{proof}
Let $m= |\nu/\lambda| := |\D_{\nu/\lambda}|$.
Via the column reading word, we can interpret each tableau in $\SVT_n(\nu/\lambda)$ as 
an element of the $m$-fold tensor power of the standard $\sqgln$-crystal.
The results in
\cite[\S4]{Y} 
show that this set of elements is preserved by the associated crystal operators, which allows us to transfer a normal $\sqgln$-crystal structure to $\SVT_n(\nu/\lambda)$ with the weight map \eqref{t-wt}.
By \cite[Lem.~4.11 and Thm.~4.17]{MT}, the highest weight elements of this crystal are as described.
\end{proof}

We can now present our first application of Theorem~\ref{ch-thm}.

\begin{cor}[\cite{Buch2002}] 
If $\lambda\subseteq\nu$ are partitions then 
$\textstyle
G_{\nu/\lambda}(x_1,\dots,x_n) = \sum_\mu a^\nu_{\lambda\mu} G_\mu(x_1,\dots,x_n)$
where the sum is over all partitions $\mu$ with at most $n$ parts
and $a^\nu_{\lambda\mu} $ is the number of set-valued tableaux $T\in \SVT_n(\nu/\lambda)$ for which $w(\col(T))$ is a reverse lattice word of weight $\mu$.
\end{cor}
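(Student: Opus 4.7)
The plan is to show this corollary as a direct consequence of Theorem~\ref{ch-thm} and Theorem~\ref{svt-thm}, since all the nontrivial combinatorics has already been absorbed into those two statements. First I would observe that by the very definition of $G_{\nu/\lambda}$ together with the definition of the crystal character, one has
\[
G_{\nu/\lambda}(x_1,\dots,x_n) \;=\; \sum_{T\in\SVT_n(\nu/\lambda)} x^{\weight(T)} \;=\; \ch\bigl(\SVT_n(\nu/\lambda)\bigr),
\]
where the weight map on the right is the one in \eqref{t-wt}. Since the boxes of $\D_{\nu/\lambda}$ are finitely many and each is filled by a non-empty subset of $[n]$, the set $\SVT_n(\nu/\lambda)$ is finite.

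Next I would invoke Theorem~\ref{svt-thm} to endow $\SVT_n(\nu/\lambda)$ with a normal $\sqgln$-crystal structure compatible with this weight map. Then Theorem~\ref{ch-thm} applies and yields
\[
\ch\bigl(\SVT_n(\nu/\lambda)\bigr) \;=\; \sum_{T\in \HW(\SVT_n(\nu/\lambda))} G_{\weight(T)}(x_1,\dots,x_n).
\]
By the normality statement in \cite[Lem.~4.11]{MT} recalled earlier, $\weight(T)$ is a partition with at most $n$ parts for every highest weight $T$, so the symbols $G_{\weight(T)}$ on the right are legitimate symmetric Grothendieck polynomials.

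Finally, using again the description of highest weight elements given in Theorem~\ref{svt-thm}, I would group the sum according to the common value $\mu=\weight(T)$: the number of $T\in \SVT_n(\nu/\lambda)$ that contribute to the coefficient of $G_\mu$ is exactly the number of semistandard set-valued tableaux of shape $\nu/\lambda$ whose column reading word $w(\col(T))$ is a reverse lattice word of weight $\mu$, i.e.\ $a^\nu_{\lambda\mu}$. Assembling these steps gives the claimed identity.

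There is no real obstacle to overcome here, since both the crystal structure on $\SVT_n(\nu/\lambda)$ and the Grothendieck positivity of the character of any finite normal $\sqgln$-crystal are established in the quoted theorems. The only minor point worth flagging in the write-up is making explicit why the indexing set $\mu$ can be restricted to partitions with at most $n$ parts, which follows from the normality-plus-highest-weight constraint rather than from any fresh argument.
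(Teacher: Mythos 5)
Your argument is correct and is essentially the paper's own proof, which simply applies Theorem~\ref{ch-thm} to the normal $\sqgln$-crystal structure on $\SVT_n(\nu/\lambda)$ furnished by Theorem~\ref{svt-thm}; your write-up only spells out the bookkeeping (character identification, finiteness, grouping highest weight elements by weight) that the paper leaves implicit.
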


\begin{proof}
Apply Theorem~\ref{ch-thm}
to the normal $\sqgln$-crystal given in Theorem~\ref{svt-thm}.
\end{proof}

\begin{rem}\label{limit-rem}
If one lets  $n\to\infty$, then the polynomials
$G_{\nu/\lambda}(x_1,x_2,\dots,x_n)$
and $G_{\mu}(x_1,x_2,\dots,x_n)$
converge in the sense of formal power series to well-defined symmetric functions $G_{\nu/\lambda}$ and $G_\mu$.
After taking this limit and adjusting signs (the symmetric function written ``$G_\lambda$'' in \cite{Buch2002}
corresponds in our notation to $(-1)^{|\lambda|} G_\lambda(-x)$) the preceding corollary becomes \cite[Thm.~9.3]{Buch2002}.
\end{rem}

\subsection{A new proof of the Littlewood--Richardson rule of Buch}

As another application of Theorem~\ref{ch-thm}, we can give an alternate proof of the \definition{Littlewood--Richardson rule} of Buch that decomposes the product of two symmetric Grothendieck polynomials.

If $\lambda$ is a partition with largest part $p$ and $\mu$ is a partition with $q$ parts, then let $\lambda\ast\mu$
be the skew shape formed by
    shifting the Young diagram of $\lambda$ down by $q$ rows, then shifting the Young diagram of $\mu$ to the right by $p$ columns, and finally taking the union; equivalently:
    \begin{equation}
     \lambda\ast\mu := (p+\mu_1,p+\mu_2,\dots,p+\mu_q,\lambda_1,\lambda_2,\lambda_3,\dots)/(p^q).
 \end{equation}
After letting $n\to \infty$ and adjusting signs
as in Remark~\ref{limit-rem},
 the following recovers \cite[Thm.~5.4]{Buch2002}:

\begin{cor}[\cite{Buch2002}] 
    If $\lambda$ and $\mu$ are partitions then
    \[\textstyle G_\lambda(x_1,x_2,\dots,x_n) G_\mu(x_1,x_2,\dots,x_n)= \sum_{\nu} c_{\lambda\mu}^\nu G_\nu(x_1,x_2,\dots,x_n)\]
    where the sum is over all partitions $\nu$ with at most $n$ parts 
    and $c_{\lambda\mu}^\nu$ is the number of set-valued tableaux $T\in \SVT_n(\lambda \ast \mu)$ for which $w(\col(T))$ is a reverse lattice word of weight $\nu$.
\end{cor}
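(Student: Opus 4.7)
The plan is to reduce this statement directly to the skew $G$-expansion corollary from the previous subsection, using a factorization argument. The key geometric point is that the skew shape $\lambda \ast \mu/(p^q)$ splits into two disjoint pieces: the $\mu$-part lies in rows $1,\ldots,q$ and columns $p+1,\ldots,p+\mu_1$, while the $\lambda$-part lies in rows $q+1,q+2,\ldots$ and columns $1,\ldots,p$. These pieces share no common row and no common column, so any $T \in \SVT_n(\lambda \ast \mu)$ decomposes uniquely as a pair $(T_\lambda,T_\mu) \in \SVT_n(\lambda) \times \SVT_n(\mu)$, and the semistandard condition on $T$ is equivalent to semistandardness of $T_\lambda$ and $T_\mu$ separately, since no row or column inequality links the two pieces.

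Under this bijection, the weight is additive: $\weight(T)=\weight(T_\lambda)+\weight(T_\mu)$. Summing over all $T$ then gives
\[
G_{\lambda\ast\mu/(p^q)}(x_1,\dots,x_n)=G_\lambda(x_1,\dots,x_n)\,G_\mu(x_1,\dots,x_n).
\]
Furthermore, when one reads columns from left to right, the first $p$ columns contain only $\lambda$-part entries and the remaining columns contain only $\mu$-part entries, so $\col(T)$ is the concatenation of $\col(T_\lambda)$ with $\col(T_\mu)$; consequently $w(\col(T)) = w(\col(T_\lambda))\cdot w(\col(T_\mu))$.

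Applying the previous corollary to the skew shape $\lambda\ast\mu/(p^q)$ (with outer shape $\lambda\ast\mu$ and inner shape $(p^q)$) expresses $G_{\lambda\ast\mu/(p^q)}$ as $\sum_\nu c_{\lambda\mu}^\nu\, G_\nu$, where by definition $c_{\lambda\mu}^\nu$ is the number of $T\in\SVT_n(\lambda\ast\mu)$ whose column reading word is a reverse lattice word of weight $\nu$. Combining this with the product factorization above yields the stated identity. No serious obstacle arises: the argument amounts to the observation that the naive combinatorial model for the product $G_\lambda\cdot G_\mu$ as a pair of independent set-valued tableaux is exactly the combinatorial object that the skew rule from the previous subsection already expands, so Theorem~\ref{ch-thm} does all the work through its skew corollary.
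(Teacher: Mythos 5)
Your proof is correct, but it is packaged differently from the paper's. The paper's proof is a two-line crystal argument: by construction of the tensor product one has the isomorphism $\SVT_n(\lambda)\otimes\SVT_n(\mu)\cong\SVT_n(\lambda\ast\mu)$ of normal $\sqgln$-crystals, so multiplicativity of characters gives $G_\lambda G_\mu=\ch(\SVT_n(\lambda\ast\mu))$, and then Theorem~\ref{ch-thm} together with the highest-weight description in Theorem~\ref{svt-thm} is applied directly to $\SVT_n(\lambda\ast\mu)$. You instead prove the identity $G_\lambda(x_1,\dots,x_n)G_\mu(x_1,\dots,x_n)=G_{\lambda\ast\mu}(x_1,\dots,x_n)$ by the elementary splitting bijection $\SVT_n(\lambda\ast\mu)\leftrightarrow\SVT_n(\lambda)\times\SVT_n(\mu)$ (valid exactly because the two pieces of $\lambda\ast\mu$ share no row and no column, and weights add), and then quote the skew expansion corollary for the shape $\lambda\ast\mu$, whose coefficient is literally $c_{\lambda\mu}^\nu$. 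Both arguments ultimately rest on the same application of Theorem~\ref{ch-thm} to the crystal $\SVT_n(\lambda\ast\mu)$; the difference is that your product step needs only the set-level bijection and weight additivity, with no reference to crystal operators or to the fact that the crystal structure on $\SVT_n(\lambda\ast\mu)$ is the tensor product structure, which makes it slightly more elementary and closer to Buch's original derivation, whereas the paper's route showcases the crystal formalism ($\ch(\cB\otimes\cC)=\ch(\cB)\ch(\cC)$) that the article is promoting. Your remark that $\col(T)$ is the concatenation of $\col(T_\lambda)$ and $\col(T_\mu)$ is true but not actually needed, since the statement's definition of $c_{\lambda\mu}^\nu$ already refers to reverse lattice words of tableaux on the full shape $\lambda\ast\mu$.
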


\begin{proof}
For the $\sqgln$-crystal structure
 in Theorem~\ref{svt-thm} and the tensor product defined in Section~\ref{crystal-sect}, it holds by construction that 
$\SVT_n(\lambda) \otimes \SVT_n(\mu) \cong \SVT_n(\lambda\ast\mu)$.
Given this observation and Theorem~\ref{ch-thm},
both sides of the desired identity are the character of $\SVT_n(\lambda\ast\mu)$.
\end{proof}

\subsection{Symmetric Grothendieck functions of permutations}\label{pi-sect}

There is a generalization of $G_\lambda(x_1,x_2,\dots,x_n)$ indexed by permutations rather than partitions.
These polynomials arise as the stable limits of $K$-theory representatives for \definition{Schubert varieties} in the complete flag variety \cite{LS82,FL}. They were introduced in \cite{FK} and are known to be $G$-positive \cite{BKSTY}.
Here, we outline how to derive this positivity property from Theorem~\ref{ch-thm}.

Let $S_\infty$ be the group of permutations of $\PP$ with finite support. For each $m \in \PP$ we view $S_m \subset S_\infty$ as the finite subgroup of permutations fixing all points outside $[m]$.
The \definition{Demazure product} on $S_\infty$ is the unique associative binary operation $\circ : S_\infty\times S_\infty \to S_\infty$
that satisfies 
$s_i\circ s_i = s_i$ for $s_i:=(i,i+1) \in S_\infty$
and $w\circ s_i = ws_i$ for $w \in S_\infty$ with $w(i)<w(i+1)$.
 A \definition{Hecke word} for $w \in S_\infty$ is a finite sequence of positive integers $i_1i_2\cdots i_k$ with $w=s_{i_1} \circ s_{i_2}\circ \cdots \circ s_{i_k}.$
 
 Let $\cH(w)$ be the set of all Hecke words for $w$. 
As explained in \cite{BKSTY}, each set $\cH(w)$ is an equivalence class  for the transitive closure of the symmetric relation on integer sequences with
\begin{equation}\label{hecke-rel}
  \ba  \cdots p\cdots &\sim \cdots pp\cdots &&\text{for all $p \in \ZZ$} \\
  \cdots pqp\cdots &\sim \cdots qpq\cdots &&\text{for $p \neq q$} \\
  \cdots pq\cdots &\sim \cdots qp\cdots &&\text{for $|p-q|>1$.}
  \ea
  \end{equation}
 Here, the subwords masked by corresponding ellipses ``$\cdots$'' are required to be identical.

 A \definition{decreasing Hecke factorization} of $w \in S_\infty$ is an $n$-tuple $a=(a^1, a^2,\dots,a^n)$
of strictly decreasing words $a^i$ such that the concatenation $\concat(a):= a^1a^2 \cdots a^n$ belongs to $\cH(w)$. 
 Let 
 \begin{equation}\label{wt-eq2} \weight(a) := (\ell(a^1),\ell(a^2),\dots,\ell(a^n)) \in \ZZ^n\end{equation}
and write $\DHF_n(w)$
for the set of all decreasing Hecke factorizations of $w \in S_\infty$.

\begin{defn} For each permutation $w \in S_\infty$ let
$G_w(x_1,x_2,\dots,x_n) = \sum_{a \in \DHF_n(w)} x^{\weight(a)}$.
\end{defn}

\begin{rem}\label{beta-rem} This definition differs slightly from prior literature by omitting signs or more general bookkeeping parameters. These can be recovered
by making appropriate variable substitutions. 
For example, what is denoted ``$G_w$'' in \cite{BKSTY}
is expressed in our notation as $\sgn(w) G_w(-x)$.
\end{rem}

  \begin{exa}\label{pi-ex}
 The Hecke words of $w=1432\in S_4$ 
 consist of all words in the alphabet $\{2,3\}$ 
 except those
 in which every 2 precedes every 3, or vice versa.
 The set
  $
  \DHF_2(w) = 
  \{ (2,32), (32,3), (32,32)\} $ 
  is finite,
  and we have 
  $G_{w}(x_1,x_2) = x_1x_2^2 + x_1^2 x_2 + x_1^2x_2^2 = G_{\lambda}(x_1,x_2)$ for $\lambda=(2,1)$.
 \end{exa}

An \definition{increasing tableau} is a filling of the Young diagram of a  partition by positive integers such that all rows and columns are strictly increasing.
Given $a=(a^1,a^2,\dots,a^n) \in \DHF_n(w)$,
let $\Tab(a)$ be tableau
formed by placing the reversal of $a^i$ in row $i$.
For example, if $a=(32,3)$ then $\Tab(a) = \ytab{2 & 3 \\ 3}$. In general, this tableau need not be increasing or have partition shape. However:

\begin{thm}\label{dhf-thm}
The set $\DHF_n(w)$ has a normal $\sqgln$-crystal structure
    with weight map \eqref{wt-eq2} such that $a \in \DHF_n(w)$
    is highest weight if and only if $\Tab(a)$ is an increasing tableau.
\end{thm}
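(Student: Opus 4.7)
The plan is to proceed in direct analogy with the proof of Theorem~\ref{svt-thm}: embed $\DHF_n(w)$ into a tensor power of the standard $\sqgln$-crystal, verify that this subset is preserved by the crystal operators, and then identify its highest weight elements.

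For the embedding, I would identify each factorization $a=(a^1,\ldots,a^n) \in \DHF_n(w)$ with the set-valued word $(A^1,\ldots,A^n)$, where $A^i$ is the set of entries of $a^i$. Since each $a^i$ is strictly decreasing, this identification is a bijection between $\DHF_n(w)$ and a collection of $n$-tuples of subsets; a small convention handles the possibly empty factors (for instance, padding with distinguished identity elements, or working inside a suitably enlarged tensor product). Under this identification, the weight map \eqref{wt-eq2} coincides with the restriction of the natural tensor product weight, since $|A^i| = \ell(a^i)$. Using the explicit formulas for the $\sqgln$-crystal operators on set-valued words developed in \cite{Y}, I would then show that $\DHF_n(w)$ is preserved by both the raising operators $e_i$ and the lowering operators $f_i$. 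Because the factors are encoded as subsets, the decreasingness of each $a^i$ is automatic; the content of the claim is that each operator preserves the Hecke equivalence class of $\concat(a)$, i.e., that $\concat(e_i(a))$ and $\concat(f_i(a))$ lie in $\cH(w)$ whenever they are defined. I expect this to reduce to a case analysis, checking that every local move on an svword can be realized as a composition of the Hecke moves in \eqref{hecke-rel} on the concatenated word. Once closure is established, $\DHF_n(w)$ is a union of connected components of a normal crystal and hence inherits a normal $\sqgln$-crystal structure.

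To characterize the highest weight elements, I would invoke the Hecke insertion of \cite{BKSTY}: sending $a \in \DHF_n(w)$ to the insertion/recording pair $(P,Q)$ of $\concat(a)$ gives a bijection between $\DHF_n(w)$ and pairs in which $P$ is an increasing tableau in the Hecke class of $w$ and $Q \in \SVT_n(\mathrm{shape}(P))$ records the block structure of the factorization. Granting that this bijection is crystal equivariant with respect to the structure on the right-hand side induced, via the $Q$-component, by Theorem~\ref{svt-thm}, the highest weight elements of $\DHF_n(w)$ correspond to the pairs $(P,Q)$ in which $Q$ is the unique highest weight recording tableau of its shape and weight (the ``superstandard'' filling with $Q_{ij}=\{i\}$ throughout row $i$). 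Inverting Hecke insertion for such a pair should return the factorization $a$ with $\Tab(a) = P$, an increasing tableau; and conversely, if $\Tab(a)$ is increasing then $\Tab(a)$ is itself the insertion tableau of $\concat(a)$ and the recording tableau is superstandard, so $a$ is highest weight.

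I expect the main obstacle to be the verification that the $\sqgln$-crystal operators preserve the Hecke equivalence class. The local action of these operators on set-valued words is intricate, and one has to carefully track each possible local change and certify that it can be effected by the moves in \eqref{hecke-rel} on the concatenated word. An alternative would be to establish the crystal equivariance of Hecke insertion directly, which would in effect merge the closure and highest weight steps into a single argument; but this does not obviously reduce the total amount of case analysis required.
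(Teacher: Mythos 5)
Your plan has genuine gaps, starting with the embedding itself. Identifying $a=(a^1,\dots,a^n)$ with the tuple $(A^1,\dots,A^n)$, where $A^i$ is the set of letters of $a^i$, puts you in the wrong crystal: those sets are subsets of $[m]$ (the alphabet of Hecke words for $w\in S_{m+1}$), so this tuple naturally lives in $(\One_m\sqcup\SS_m)^{\otimes n}$, and the operators of \cite{Y} applied to it are indexed by the \emph{letter values} $1,\dots,m-1$, giving a $\sqrt{\mathfrak{gl}_m}$-structure whose tensor-product weight is the content of $\concat(a)$ in $\ZZ^m$ --- not the map \eqref{wt-eq2}, contrary to your claim that it restricts correctly. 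The theorem requires operators indexed by the factor positions $1,\dots,n-1$ with weight $(\ell(a^1),\dots,\ell(a^n))$, which forces the transposed encoding the paper actually uses: $\svword(a)=(S_1,\dots,S_m)\in\SVWordsss{n}{m}$ with $S_j=\{i\in[n]: m+1-j\in a^i\}$, so that the sets record factor indices and the positions record letters.

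Even after fixing the encoding, the two steps you defer are precisely the hard content, and your proposed routes for them are not proofs. (a) Closure --- that a crystal operator, which can move a letter between adjacent factors or insert/delete a letter (the operators change the length of $\concat(a)$), preserves the Hecke class --- is not handled in the paper by a local case analysis against \eqref{hecke-rel}, and it is far from routine: the operators are defined by a global parenthesis-pairing rule, not by local moves. The paper instead proves it by combining the rectification operator (Theorem~\ref{rect-thm}), the identity $\Tab(\rect(S))=\TT(S)$ (Theorem~\ref{P: Insertion = rect}), Lemma~\ref{rts-lem}, and the fact that Hecke insertion preserves Hecke classes, to show each element's permutation agrees with that of its rectification and hence that the fibers $\svword(\DHF_n(w))$ are preserved. (b) The ``crystal equivariance'' of Hecke insertion that you grant is essentially Theorem~\ref{main-thm}/Theorem~\ref{P: Insertion = rect}, i.e.\ the main technical result of Section~\ref{gp-sect}; assuming it assumes the substance of what must be proved. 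Granting it, your superstandard-$Q$ criterion is correct (the only semistandard set-valued tableau of shape $\lambda$ and weight $\lambda$ is the superstandard one), but it is an unnecessary detour: with the correct encoding one has $\Tab(a)=\Tab(\svword(a))$, and Proposition~\ref{tab-highest} immediately says highest weight is equivalent to $\Tab(a)$ being increasing, which is exactly how the paper concludes.
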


We defer the proof of this theorem to Section~\ref{Tab-sect},
where we can give a succinct argument using some of the lemmas derived on the way to Theorem~\ref{ch-thm}.

We have already defined the column reading word of a (set-valued) tableau.
The \definition{row reading word} of a tableau $T$ is 
the word $\row(T)$ formed by reading the rows from left to right, starting with the bottom row in English notation.
The \definition{reverse reading word} $\revrow(T)$ is the reversal
of $\row(T)$.

\begin{cor}[\cite{BKSTY}]
\label{bksty-cor}
    If $w \in S_\infty$ then 
    $\textstyle 
    G_{w}(x_1,x_2,\dots,x_n) = \sum_\lambda c_{w\lambda} G_\lambda(x_1,x_2,\dots,x_n)$
    where the sum is over all partitions $\lambda$ with at most $n$ parts and 
    $c_{w\lambda}$ is the number of increasing tableaux $T$ of shape $\lambda$ with $\revrow(T)\in \cH(w)$.
\end{cor}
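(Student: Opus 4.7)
The plan is to combine the two main ingredients already in hand, Theorem~\ref{ch-thm} and Theorem~\ref{dhf-thm}, and then reinterpret the sum over highest weight elements as a sum over increasing tableaux.

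First, I would invoke Theorem~\ref{dhf-thm} to equip $\DHF_n(w)$ with a normal $\sqgln$-crystal structure whose character equals $G_w(x_1,\dots,x_n)$ by definition. Since Hecke words for a fixed $w$ use a bounded alphabet and each of the $n$ factors in a decreasing Hecke factorization is a strictly decreasing word in that alphabet, $\DHF_n(w)$ is finite. Then Theorem~\ref{ch-thm} yields
\[
G_w(x_1,\dots,x_n) \;=\; \ch(\DHF_n(w)) \;=\; \sum_{a \in \HW(\DHF_n(w))} G_{\weight(a)}(x_1,\dots,x_n).
\]
Because the crystal is normal, the weight of every highest weight element is a partition with at most $n$ parts; this is the general fact cited in the introduction and ensures that each summand on the right is a genuine $G_\lambda$ indexed by a partition $\lambda$ with $\ell(\lambda)\le n$.

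Next, I would identify highest weight elements with increasing tableaux via the assignment $a \mapsto \Tab(a)$. By Theorem~\ref{dhf-thm}, $a \in \HW(\DHF_n(w))$ if and only if $\Tab(a)$ is an increasing tableau. Unpacking the definitions: since $a^i$ is strictly decreasing, its reversal (placed in row $i$ of $\Tab(a)$) is strictly increasing, so rows are automatically strictly increasing; ``$\Tab(a)$ is an increasing tableau'' additionally forces columns to be strictly increasing and the row lengths $\weight(a) = (\ell(a^1),\dots,\ell(a^n))$ to form a partition $\lambda$, consistent with the normality observation above. Reading off rows bottom-to-top from left to right gives $\row(\Tab(a)) = \rev(a^n)\cdots\rev(a^2)\rev(a^1)$, so
\[
\revrow(\Tab(a)) \;=\; a^1 a^2 \cdots a^n \;=\; \concat(a) \;\in\; \cH(w).
\]
Conversely, any increasing tableau $T$ of shape $\lambda$ with $\ell(\lambda)\le n$ and $\revrow(T)\in\cH(w)$ is obtained in this way from a unique $a\in\DHF_n(w)$ (by splitting $\revrow(T)$ into row-blocks and reversing each), so the map $a\mapsto\Tab(a)$ is a bijection between highest weight elements of weight $\lambda$ and such increasing tableaux.

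Finally, I would group the previous display by shape: for each partition $\lambda$ with at most $n$ parts, the number of $a\in\HW(\DHF_n(w))$ with $\weight(a)=\lambda$ equals the number $c_{w\lambda}$ of increasing tableaux $T$ of shape $\lambda$ with $\revrow(T)\in\cH(w)$. Substituting yields the claimed expansion. There is no real obstacle here beyond bookkeeping; the only subtlety is verifying the identification $\revrow(\Tab(a))=\concat(a)$ between the reverse reading word of $\Tab(a)$ and the concatenation of the factors of $a$, since this is what transports ``being a Hecke word for $w$'' between the two sides of the bijection.
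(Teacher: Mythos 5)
Your proposal is correct and follows essentially the same route as the paper: apply Theorem~\ref{dhf-thm} to get the normal $\sqgln$-crystal structure on $\DHF_n(w)$, use the identity $\revrow(\Tab(a))=\concat(a)$ to match highest weight elements of weight $\lambda$ with increasing tableaux counted by $c_{w\lambda}$, and conclude via Theorem~\ref{ch-thm}. The extra details you supply (finiteness of $\DHF_n(w)$ and the explicit inverse of $a\mapsto\Tab(a)$) are accurate but just elaborate the bookkeeping the paper leaves implicit.
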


\begin{proof}
As $\revrow(\Tab(a)) = \concat(a)$, 
it follows from Theorem~\ref{dhf-thm} that $c_{w\lambda}$ counts the highest weight elements in $\DHF_n(w)$ of weight $\lambda$.  
The corollary therefore holds by Theorem~\ref{ch-thm}. 
\end{proof}

The reading words $\row(T)$ and $\col(T)$ are equivalent under the relation \eqref{hecke-rel} when $T$ is increasing  \cite[Lem.~2.7]{Mar2019}, and it holds that $\revrow(T) \in \cH(w)$ if and only if
$\row(T) \in \cH(w^{-1})$.
Thus, one could also define $c_{w\lambda}$ in Corollary~\ref{bksty-cor} as 
the number of increasing tableaux $T$ of shape $\lambda$ with $\col(T)\in \cH(w^{-1})$.
This formulation of Corollary~\ref{bksty-cor} becomes  
\cite[Thm.~1]{BKSTY}
after taking the power series limit $n\to \infty$ and adjusting signs as in Remark~\ref{limit-rem}.

\subsection{Generating functions of set-valued decomposition tableaux}

Finally, we explain one new identity that can be proved using Theorem~\ref{ch-thm}.
Assume $\lambda = (\lambda_1 > \lambda_2 > \dots \geq 0)$ is a \definition{strict} integer partition.
A \definition{shifted tableau} of shape $\lambda$ is a filling of the \definition{shifted diagram} $\SD_\lambda = \{ (i,i+j-1) : (i,j ) \in \D_\lambda\}$
by numbers in $[n]$.

Let $T$ be a shifted tableau whose rows read left-to-right 
are \definition{hook words}, meaning integer sequences $w_1w_2\cdots w_N$ 
with $ w_1 \geq w_2 \geq \dots \geq w_m < w_{m+1} < w_{m+2} <\dots <w_N$ for some $1\leq m \leq N$.
Following \cite{GJKKK,MT}, we define $T$ to be a   \definition{(semistandard) decomposition tableau} if none of the following patterns occur in consecutive rows: 
\[
 \ytabb{
 \none & a  & \cdots & \  \\
 \none & \none & \cdots & b  
 }
\quad
 \ytabb{
\ & \cdots & a & \cdots & \  \\
\none & \cdots  & c & \cdots & b  \\
 }
 \quad\text{for $a\leq b\leq c$}
  \quad\text{or}\quad
 \ytabb{
y & \cdots & z  \\
 \none & \cdots  &x     
 }
 \quad
 \ytabb{
\ & \cdots & y & \cdots & z \\
 \none & \cdots  &  & \cdots & x  
 }\quad\text{for }x<y<z.
\]
The tableaux here are drawn in English notation. The leftmost boxes are on the main diagonal and the boxes $\ytabb{\cdots}$ with ellipses indicate zero or more intervening columns.

 A \definition{set-valued decomposition tableau} of shape $\lambda$ is a filling of $\SD_\lambda$ by non-empty subsets of $[n]$ whose distributions are all decomposition tableaux. An example of shape $\lambda=(3,2)$ for $n=5$ is
 \[
 \ytableausetup{boxsize = 0.7cm}
T=\begin{ytableau}
45 & 3 & 234  \\
\none & 12 & 3
\end{ytableau}.
 \]
 Let $\SVDT_n(\lambda)$ be the family of all set-valued decomposition tableaux of shape $\lambda$.
 We define the \definition{reverse row reading word} 
of such a tableau $T$
to be the sequence of sets $\revrow(T)$ formed 
by reading its rows from right to left, starting with the top row in English notation.
Also let $\weight(T)$ be as in \eqref{t-wt}.
 Our example has $\revrow(T) = (\{2,3,4\},\{3\},\{4,5\},\{3\},\{1,2\})$
and $\weight(T) =(1,2,3,2,1)$.

\begin{defn}
For each strict $\lambda$ partition let
$
\GPdec_\lambda(x_1,x_2,\dots,x_n) := \sum_{T \in \SVDT_n(\lambda)} x^{\weight(T)}.
$
\end{defn}
Cho and Ikeda have conjectured  \cite[Conj.~3.2]{MT} that this polynomial coincides with the \definition{$K$-theoretic Schur $P$-function} $\GP_\lambda(x_1,x_2,\dots,x_n)$ introduced in \cite{IkedaNaruse}, which is the weight-generating function for a different family of \definition{(semistandard) set-valued marked shifted tableaux}.

It was shown in \cite[Cor.~3.11]{MT} that  $\GPdec_\lambda(x_1,\dots,x_n) $ at least has the unitriangular form $\GP_\lambda(x_1,\dots,x_n) + \sum_{|\mu|>|\lambda|} \ZZ \GP_\mu(x_1,\dots,x_n)$.
Here, we prove that $\GPdec_\lambda(x_1,\dots,x_n) $ is $G$-positive:

\begin{cor}\label{GPdec-cor}
    If $\lambda$ is a strict partition then 
   $ \textstyle\GPdec_\lambda(x_1,x_2,\dots,x_n) = \sum_\mu g_{\lambda\mu} G_\mu(x_1,x_2,\dots,x_n)$
    where the sum is over all partitions $\mu$ with at most $n$ parts and $g_{\lambda\mu}$ is the number of tableaux $T \in \SVDT_n(\lambda)$ for which $w(\revrow(T))$ is a reverse lattice word of weight $\mu$.
\end{cor}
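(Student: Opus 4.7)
The plan is to follow the same blueprint as Theorem~\ref{svt-thm} and Corollary~\ref{bksty-cor}: equip $\SVDT_n(\lambda)$ with a normal $\sqgln$-crystal structure having the weight map \eqref{t-wt}, whose highest weight elements are precisely the tableaux $T$ for which $w(\revrow(T))$ is a reverse lattice word, and then invoke Theorem~\ref{ch-thm}.

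Concretely, via the reverse row reading word I would embed $\SVDT_n(\lambda)$ as a subset of the $|\lambda|$-fold tensor power of the standard $\sqgln$-crystal, and then show that the image is stable under the tensor-product crystal operators $e_i$ and $f_i$. Granted this stability, the embedding transports a normal $\sqgln$-crystal structure onto $\SVDT_n(\lambda)$ with the weight map \eqref{t-wt}. By \cite[Lem.~4.11 and Thm.~4.17]{MT}, the highest weight elements of any tensor power of the standard $\sqgln$-crystal are precisely those whose associated integer word is a reverse lattice word of partition weight, so the highest weight elements of $\SVDT_n(\lambda)$ are exactly the tableaux counted by $g_{\lambda\mu}$. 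Theorem~\ref{ch-thm} then yields the claimed $G$-positive expansion.

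The main obstacle is the stability claim. My approach would be to first reduce to the case of ordinary (non-set-valued) decomposition tableaux by the distribution argument employed in \cite[\S4]{Y}: the action of a tensor crystal operator on a set-valued object can be described, up to bookkeeping about which box receives the toggled letter, via its action on an appropriate distribution, so stability for $\SVDT_n(\lambda)$ will follow from stability for the set of ordinary decomposition tableaux of shape $\lambda$ inside the corresponding tensor power. For this plain case, each $e_i$ or $f_i$ toggles at most one entry of $\revrow(T)$ between the letters $i$ and $i+1$, and one must check, via a finite but somewhat intricate case analysis using the explicit signed-bracketing definition of the $\sqgln$-crystal operators, that neither of the two forbidden two-row patterns in the definition of a decomposition tableau can be created by such a toggle; the hook-word structure of the rows should be the key structural constraint making this verification go through. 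This case analysis is the technical heart of the argument.
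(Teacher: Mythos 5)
Your overall strategy is the right one, and it matches the paper's: put a normal $\sqgln$-crystal structure on $\SVDT_n(\lambda)$ with weight map \eqref{t-wt} whose highest weight elements are exactly the tableaux with $w(\revrow(T))$ a reverse lattice word, then apply Theorem~\ref{ch-thm}. The difference is that the paper does not re-derive the crystal structure at all: it simply cites \cite[Thm.~4.18]{MT}, which states precisely that $\SVDT_n(\lambda)$ carries such a structure with that highest weight characterization, and then the corollary is immediate. You instead propose to reconstruct that result by embedding $\SVDT_n(\lambda)$ via $\revrow$ into a tensor power of $\SS_n$ and proving stability under the crystal operators--but that stability verification, which you yourself identify as ``the technical heart,'' is never carried out. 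Since this is the entire mathematical content beyond Theorem~\ref{ch-thm}, the proposal as written has a genuine gap.

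Two further points make the sketched route shakier than you suggest. First, your description of the operators--``each $e_i$ or $f_i$ toggles at most one entry of $\revrow(T)$ between the letters $i$ and $i+1$''--is not how the $\sqgln$-operators act: by Definition~\ref{SS-ex} and Proposition~\ref{sv-e-def}, an operator may also \emph{add} the letter $i$ (resp.\ $i+1$) to a set-valued entry or \emph{delete} $i+1$ (resp.\ $i$) from one, changing the weight by $\pm\e_i$ or $\mp\e_{i+1}$ and changing the sizes of the sets. Consequently the proposed reduction ``to ordinary decomposition tableaux via distributions'' is not a routine bookkeeping step either, since the set of distributions itself changes under the operators; this is exactly the kind of difficulty the arguments in \cite{Y} and \cite[\S4]{MT} are designed to handle. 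Second, the case analysis you defer (that neither forbidden pattern in the definition of a decomposition tableau can be created) would have to cover all four kinds of moves just described, not merely an $i\leftrightarrow i+1$ swap. If you do not want to carry out that analysis, the honest fix is to quote \cite[Thm.~4.18]{MT}, as the paper does, after which your final appeal to Theorem~\ref{ch-thm} is correct.
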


\begin{proof}
Prior work of the first two authors \cite[Thm.~4.18]{MT}
shows that $\SVDT_n(\lambda)$
    has a normal $\sqgln$-crystal structure 
    with weight map \eqref{t-wt} such that $T\in\SVDT_n(\lambda)$
    is highest weight if and only if $w(\revrow(T))$ is a reverse lattice word.
    Apply Theorem~\ref{ch-thm} to this crystal.
\end{proof}

We mention that it is known \cite[Thm.~3.27]{MaSc} that the polynomial $\GP_\lambda(x_1,x_2,\dots,x_n)$ is $G$-positive. Thus,
one approach to proving \cite[Conj.~3.2]{MT} would be to show that the relevant $G$-expansion
of
$\GP_\lambda(x_1,x_2,\dots,x_n)$
is also $\sum_\mu g_{\lambda\mu} G_\mu(x_1,x_2,\dots,x_n)$.

\begin{exa}
We can use Corollary~\ref{GPdec-cor} to  decompose $\GPdec_\lambda(x_1,\dots,x_n)$  when $\lambda = (m)$ has one part. Let $k = \max\{1, m-n+1\}$. One can show as an exercise from \cite[\S4]{MT} that the highest weight elements in the $\sqgln$-crystal $\SVDT_n((m))$ are the tableaux of shape $(m)$ of the form
\[
\ytableausetup{boxsize = 0.6cm} \begin{ytableau}
    1 & 1 & \cdots & 1 & 2 & 3& \cdots & j
\end{ytableau}
\quand
\ytableausetup{boxsize = 0.6cm} \begin{ytableau}
    1 & 1 & \cdots & 1 & 12 & 3& 4 &  \cdots & j
\end{ytableau}
\quad\text{ with }j\leq n.\]
It then follows from Corollary~\ref{GPdec-cor}  that 
\begin{equation}
\textstyle
    \GPdec_{(m)}(x_1,\dots,x_n) = \sum_{i = k}^m G_{(i,1^{m-i})}(x_1,\dots,x_n) + \sum_{i = k+1}^m G_{(i, 1^{m+1-i})}(x_1,\dots,x_n).
\end{equation}
We know of at least one proof that $\GP_{(m)}(x_1,\dots,x_n)$ has the same $G$-expansion (see Figure~\ref{GP-fig} for a bijective argument), so this verifies \cite[Conj.~3.2]{MT} for one-row strict partitions.

\end{exa}

\newcommand{\downmapsto}{\rotatebox[origin=c]{-90}{$\mapsto$}\mkern2mu}

\begin{figure}[h]
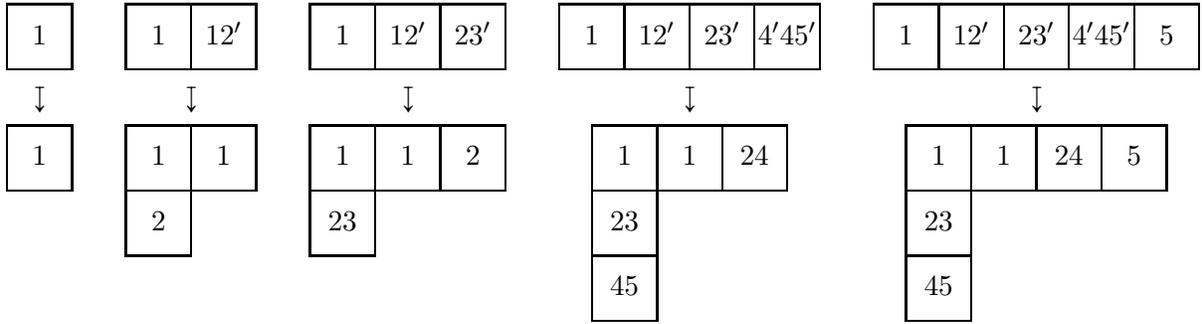

\[
\barr{ccccccccc}
\ytableausetup{aligntableaux=top,boxsize = 0.85cm}
\begin{ytableau}
    1  
\end{ytableau}
&&
\begin{ytableau}
    1 & 12' 
\end{ytableau}
&&
\begin{ytableau}
    1 & 12' & 23'
\end{ytableau}
&&
\begin{ytableau}
    1 & 12' & 23' & 4'45' 
\end{ytableau}
&&
\begin{ytableau}
    1 & 12' & 23' & 4'45' & 5
\end{ytableau}
\\
\downmapsto && \downmapsto && \downmapsto &&\downmapsto && \downmapsto
\\[-10pt]\\
\begin{ytableau}
    1
\end{ytableau}
&&
\begin{ytableau}
    1 & 1\\
    2
\end{ytableau}
&&
\begin{ytableau}
    1 & 1 & 2\\
    23
\end{ytableau}
&&
\begin{ytableau}
    1 & 1 & 24\\
    23 \\
    45
\end{ytableau}
&&
\begin{ytableau}
    1 & 1 & 24 & 5\\
    23 \\
    45
\end{ytableau}
\earr
\]
\caption{
The polynomial $\GP_{(m)}(x_1,\dots,x_n)$ is the weight-generating function for all fillings $T$ of a one-row Young diagram with $m$ boxes  
by nonempty subsets of $\{1'<1<2'<2<\dots<n'<n\}$
such that $\max(T_{1j}) \leq \min(T_{1,j+1})$ for all $j \in[m-1]$, with strict inequality if $\max(T_{1j})$ is primed \cite[\S9.1]{IkedaNaruse}.
One can inductively construct a weight-preserving bijection between such tableaux and the union
 $\bigsqcup_{i=k}^m \SVT_{(i, 1^{m-i})}\sqcup  \bigsqcup_{i=k+1}^m \SVT_{(i, 1^{m+1-i})}$
for $k=\max\{1,m-n-1\}$.
Rather than presenting a formal definition, we show an  example of this bijection to illustrate the main idea. 
}\label{GP-fig}
\end{figure}

\subsection{Future directions and outline}

This work is not the end of the story for square root crystals. It would be very interesting to find connections between normal square root crystals and representation theory. Another open problem is to determine if there are local axioms in the style of \cite{GHPS,Stembridge} that classify which square root crystals are normal. 

We also expect that there are interesting versions of square root crystals for other types. 
There is at least a theory of (normal) square root crystals \cite{MT}  associated to the queer Lie superalgebra $\q_n$. Such crystals have the same relationship to the \definition{$\q_n$-crystals} introduced in work of Grantcharov et al. \cite{GJKKK} as $\sqgln$-crystals do to $\gl_n$-crystals. The first two authors conjectured a ``shifted'' analogue of Theorem~\ref{ch-thm} for these objects, in which $G$-positivity is replaced by  \definition{$\GP$-positivity}; see \cite[Conj.~4.36]{MT}.

A brief outline of the rest of this article is as follows.
Section~\ref{prelim-sect}
reviews the formal definitions of $\gl_n$- and $\sqgln$-crystals, and then proves some new results about highest weight elements in normal $\sqgln$-crystals.
Section~\ref{gp-sect}
contains the proofs of Theorems~\ref{ch-thm} and \ref{dhf-thm}. These proofs are based on a nontrivial reformulation of the \definition{Hecke insertion algorithm} from \cite{BKSTY}, which is explained in Section~\ref{hecke-sect}.
Section 4, finally, discusses a square root analogue of \definition{Demazure crystals}
together with a positivity conjecture  
about the characters of these objects.

\subsection*{Acknowledgments}

This work was partially supported by Hong Kong RGC grant 16304122. 
We thank Valentin Buciumas, Zachary Hamaker, Takeshi Ikeda, Joel Lewis, Brendan Pawlowski, Travis Scrimshaw, Mark Shimozono, and Alexander Yong  
for useful conversations.

\section{Square root crystals}\label{prelim-sect}

Throughout, $n$ is a fixed positive integer, while $[n]=\{1,2,\dots,n\}$ and $\NN = \{0,1,2,3,\dots\}$.
This section first reviews the definitions of $\gl_n$- and $\sqgln$-crystals
from \cite{BumpSchilling,MT,Y}.
Section~\ref{svw-sect} recalls a useful model for normal $\sqgln$-crystals
and presents some related technical lemmas.
Section~\ref{rect-sect}, finally, 
contains a new result
showing how highest weight elements in normal $\sqgln$-crystals
can be obtained by applying a certain rectification operator.

\subsection{Abstract crystals}\label{crystal-sect}

We adopt the following setup throughout.
Suppose $\cB$ is a set with maps $\weight  :  \cB\to \ZZ^n$
and 
$e_i,f_i  :  \cB \to \cB \sqcup \{0\}$  for $i \in [n-1]$,
where $0 \notin \cB$.
Define $\varepsilon_i, \varphi_i  :  \cB \to \{0,1,2,\dots\}\sqcup \{\infty\}$ 
by 
\[
\label{var-eq}
\varepsilon_i(b) := \sup\left\{ k\geq 0 : (e_i)^k(b) \neq 0\right\}
\quand
\varphi_i(b) := \sup\left\{ k \geq 0: (f_i)^k(b) \neq 0\right\}.
\]

Write $\e_1,\e_2,\dots,\e_n$ for the standard basis of $\ZZ^n$.
The following definition is classical \cite{BumpSchilling}:
\begin{defn}
\label{crystal-def}
The set $\cB$ is a
 \definition{(seminormal) $\gl_n$-crystal}
if 
for all $i \in [n-1]$ and $b,c \in \cB$ we have
\ben
\item[(a)] both $\varepsilon_i(b)$ and $\varphi_i(b)$ are finite with $\varphi_i(b) - \varepsilon_i(b) = \weight(b)_i - \weight(b)_{i+1}$, and 

\item[(b)]  $e_i(b) = c $ if and only if $ f_i(c) = b$, 
in which case $ \weight(c) = \weight(b) + \e_{i} -\e_{i+1}$.
\een
\end{defn}

When these conditions hold, we refer to $\weight$ as the \definition{weight map} of $\cB$ and to $e_i$ and $f_i$ as  \definition{raising and lowering crystal operators}.
The \definition{crystal graph} of $\cB$ is the directed graph with vertex set $\cB$ that has a labeled edge $b\xrightarrow{i}c$ whenever $f_i(b) = c\neq 0$.
If this graph is weakly connected then we say that $\cB$ is \definition{connected}.
The \definition{highest weight elements} of $\cB$ are the source vertices in the crystal graph satisfying $e_i(b)=0$ for all $i \in [n-1]$.

Two crystals are \definition{isomorphic} if there is a weight-preserving graph 
isomorphism between their crystal graphs.
A \definition{full subcrystal} of $\cB$ is the set of vertices in some union of weakly connected components of the crystal graph of $\cB$. Such a set becomes a crystal by restricting the weight map and crystal operators. A \definition{connected component} of $\cB$ is a connected full subcrystal.

\begin{defn}\label{trivial-ex}
The \definition{trivial $\gl_n$-crystal} $\One_n=\{\emptyset\}$ has a single element of weight $0 \in \ZZ^n$, with 
all crystal operators acting as $e_i(\emptyset) = f_i(\emptyset) =0$.
The \definition{standard $\gl_n$-crystal} $\BB_n = \left\{ \boxed{i}: i \in [n]\right\}$ has the crystal graph shown in Figure~\ref{gl-fig}
along with the weight map $\weight(\boxed{i}):=\e_i \in \ZZ^n$.
\end{defn}

The following variant of Definition~\ref{crystal-def} was introduced in \cite{MT}:

\begin{defn}\label{sqgln-def}
The set $\cB$ is a \definition{$\sqgln$-crystal}
if for all $i \in[n-1]$
and $b,c \in \cB$ we have
\ben 
\item[(a)] 
both $\varepsilon_i(b)$ and $\varphi_i(b)$ are finite with 
$ \frac{\varphi_i(b)  - \varepsilon_i(b)}{2} = \weight(b)_i-\weight(b)_{i+1} \in \ZZ,$ and 
\item[(b)]   $e_i(b) = c$ if and only if $b = f_i(c)$,
in which case
$ \weight(c) - \weight(b) = \begin{cases} \e_i &\text{if $\varepsilon_i(b)$ is even} \\ -\e_{i+1}&\text{if $\varepsilon_i(b)$ is odd}.
\end{cases}
$
\een
\end{defn}

Crystal graphs, isomorphisms, highest weight elements, full subcrystals, and connected components for these objects are defined in the same way as for $\gl_n$-crystals.
Notice that the trivial crystal $\One_n$ from Definition~\ref{trivial-ex} is the only connected $\gl_n$-crystal (up to isomorphism) that is also
a $\sqgln$-crystal.

\begin{defn}\label{SS-ex}
The \definition{standard $\sqgln$-crystal} $\SS_n$ consists of all non-empty subsets $S\subseteq [n]$,
with weight map $\weight : \SS_n \to \NN^n$  given by  $\weight(S) = \sum_{i \in S} \e_i$
and with crystal operators defined by
\[ \ba e_i(S) &:= \begin{cases}
S\sqcup \{i\}&\text{if }S\cap \{i,i+1\} = \{i+1\}, \\
S\setminus \{i+1\}&\text{if }S\cap \{i,i+1\} = \{i,i+1\}, \\
0&\text{otherwise},
\end{cases}
\\[-10pt]\\
f_i(S) &:= \begin{cases}
S\sqcup \{i+1\}&\text{if }S\cap \{i,i+1\} = \{i\}, \\
S\setminus \{i\}&\text{if }S\cap \{i,i+1\} = \{i,i+1\}, \\
0&\text{otherwise}.
\end{cases}
\ea
\]
The crystal graph of $\SS_3$ is shown in Figure~\ref{SS3-fig}.
\end{defn}

Any $\sqgln$-crystal $\cB$ can be converted to a $\gl_n$-crystal 
through the following ``squaring'' operation from \cite[\S4.1]{MT}.
Define $\cB^{(2)}$ to have the same elements and weight map as $\cB$,  but 
with crystal operators by $e_i^2$ and $f_i^2$, which are evaluated by setting $e_i(0)=f_i(0)=0$.
The resulting object is always a (seminormal) $\gl_n$-crystal \cite[Prop.~4.4]{MT}.

The \definition{character} of a finite crystal $\cB$ is the polynomial 
$
\textstyle \ch(\cB) = \sum_{b \in \cB} x^{\weight(b)} \in \NN[x_1,x_2,\dots,x_n].
$
If $\cB$ is a seminormal $\gl_n$-crystal then this polynomial is symmetric  \cite[\S2.6]{BumpSchilling}.
As $\ch(\cB)=\ch(\cB^{(2)})$, the same is true when $\cB$ is a finite $\sqgln$-crystal
 \cite[Prop.~4.4]{MT}.

\begin{exa}
If $\lambda=(1)$ then we have 
\[
\ba \ch(\BB_n) &= x_1+x_2+\dots + x_n &&= s_{\lambda}(x_1,x_2,\dots,x_n),
\\
\textstyle\ch(\SS_n) &= \textstyle\sum_{k=1}^n \sum_{1\leq i_1 < i_2<\dots <i_k \leq n} x_{i_1}x_{i_2}\cdots x_{i_n} &&= G_\lambda(x_1,x_2,\dots,x_n).
\ea\]
\end{exa}

 Suppose $\cB$ and $\cC$ are both $\gl_n$-crystals or both $\sqgln$-crystals.
Then the set of formal tensors 
\[\cB \otimes \cC := \{ b \otimes c : b\in \cB\text{ and }c\in \cC\}\]
 has a unique crystal structure (of respective type $\gl_n$ \cite[\S2.2]{MT} or $\sqgln$ \cite[\S4.4]{MT}) 
in which the weight map is 
$
\weight(b\otimes c) = \weight(b) + \weight(c)
$
and  the crystal operators act as 
\begin{equation}\label{otimes-eq}\ba 
e_i(b\otimes c) = \begin{cases}
b \otimes e_i(c) &\text{if }\varepsilon_i(b) \leq \varphi_i(c) \\
e_i(b) \otimes c &\text{if }\varepsilon_i(b) > \varphi_i(c)
\end{cases}
\quand
f_i(b\otimes c) = \begin{cases}
b \otimes f_i(c) &\text{if }\varepsilon_i(b) < \varphi_i(c)\\
f_i(b) \otimes c &\text{if }\varepsilon_i(b) \geq \varphi_i(c),
\end{cases}
\ea
\end{equation}
where  we interpret $b\otimes 0 = 0\otimes c = 0$.
This tensor product is associative, for either category of crystals, in the sense that the natural maps $\cB \otimes (\cC \otimes \cD) \to (\cB \otimes \cC) \otimes \cD$ are
  isomorphisms.

Seminormal $\gl_n$-crystals are an abstraction for the \definition{crystal bases} of quantum group representations
first studied in \cite{Kashiwara1990,Kashiwara1991,Lusztig1990a,Lusztig1990b}.
The crystal tensor product just introduced corresponds to the natural tensor product for such representations.
We are particularly interested in the following classes of crystals that can be constructed using the tensor product:

\begin{defn}\label{normal-def1} A $\gl_n$-crystal is \definition{normal} if each of its connected components is isomorphic to a connected component of $\BB_n^{\otimes m}= \BB_n\otimes \BB_n\otimes \cdots \otimes \BB_n$ for some $m\in \NN$, where we define $\BB_n^{\otimes 0} =\One_n$.
 \end{defn}

The following variant of the preceding definition was introduced in \cite{MT}.

 \begin{defn}
A $\sqgln$-crystal is \definition{normal} if each of its connected components is isomorphic to a connected component of $\SS_n^{\otimes m}= \SS_n\otimes\SS_n \otimes \cdots\otimes \SS_n$ for some $m\in \NN$, where we again define $\SS_n^{\otimes 0}=\One_n$.
\end{defn}

The category of normal $\sqgln$-crystals is automatically closed under tensor products and disjoint unions,
but it is less well behaved
than its classical counterpart.
Unlike the $\gl_n$-case, there are
connected normal $\sqgln$-crystals with multiple highest weight elements; see Figure~\ref{multiple-fig} for an example.
There are also non-isomorphic normal $\sqgln$-crystals with the same highest weights and characters \cite[\S4.4]{MT}.
By contrast, the isomorphism class of any finite normal $\gl_n$-crystal is uniquely determined by its character.

These observations are consistent with the fact 
the squaring operation $\cB \mapsto \cB^{(2)}$,
which turns a $\sqgln$-crystal into a seminormal $\gl_n$-crystal,
does not commute with the crystal tensor product and does not usually preserve normality.

\begin{figure}[h]
\centerline{
\begin{tikzpicture}[xscale=7.5, yscale=1.5,>=latex,baseline=(z.base)]
    \node at (0,0.0) (z) {};
      \node at (0, 0) (T1) {$\{3\}\otimes \{1\}\otimes\{2\}\otimes \{1\}$};
      \node at (0,-1) (T2) {$\{3\}\otimes \{1,2\}\otimes\{2\}\otimes \{1\}$};
      \node at (0,-2) (T3) {$\{3\}\otimes \{2\}\otimes\{2\}\otimes \{1\}$};
      \node at (0,-3) (T4) {$\{3\}\otimes \{2\}\otimes\{2,3\}\otimes \{1\}$};
      \node at (0,-4) (T5) {$\{3\}\otimes \{2\}\otimes\{3\}\otimes \{1\}$};
      \node at (-1, 1) (U0) {$\{2,3\}\otimes \{1\}\otimes\{2\}\otimes \{1\}$};
      \node at (-1, 0) (U1) {$\{2,3\}\otimes \{1\}\otimes\{2,3\}\otimes \{1\}$};
      \node at (-1,-1) (U2) {$\{3\}\otimes \{1\}\otimes\{2,3\}\otimes \{1\}$};
      \node at (-1,-2) (U3) {$\{3\}\otimes \{1,2\}\otimes\{2,3\}\otimes \{1\}$};
      \node at (-1,-3) (U4) {$\{3\}\otimes \{1,2\}\otimes\{3\}\otimes \{1\}$};
      \node at (-1,-4) (U5) {$\{3\}\otimes \{1,2\}\otimes\{3\}\otimes \{1,2\}$};
      \node at (-1, -5) (U6) {$\{3\}\otimes \{2\}\otimes\{3\}\otimes \{1,2\}$};
      \draw[->,thick,blue]  (T1) -- (T2) node[midway,right,scale=0.75] {$1$};
      \draw[->,thick,blue]  (T2) -- (T3) node[midway,right,scale=0.75] {$1$};
      \draw[->,thick,red]  (T3) -- (T4) node[midway,right,scale=0.75] {$2$};
      \draw[->,thick,red]  (T4) -- (T5) node[midway,right,scale=0.75] {$1$};
      \draw[->,thick,red]  (U0) -- (U1) node[midway,right,scale=0.75] {$2$};
      \draw[->,thick,red]  (U1) -- (U2) node[midway,right,scale=0.75] {$2$};
      \draw[->,thick,blue]  (U2) -- (U3) node[midway,right,scale=0.75] {$1$};
      \draw[->,thick,red]  (U3) -- (U4) node[midway,right,scale=0.75] {$2$};
      \draw[->,thick,blue]  (U4) -- (U5) node[midway,right,scale=0.75] {$1$};
      \draw[->,thick,blue]  (U5) -- (U6) node[midway,right,scale=0.75] {$1$};
      \draw[->,thick,blue]  (U3) -- (T4) node[midway,above,scale=0.75] {$1$};
      \draw[->,thick,red]  (T2) -- (U3) node[midway,above,scale=0.75] {$2$};
     \end{tikzpicture}
}
     \caption{A connected component of $\SS_3^{\otimes 4}$ with 12 elements and 2 highest weight elements. This is an example of a connected normal $\sqrt{\gl_3}$-crystal with multiple highest weight elements.
    The character of the crystal is
     $x_1 x_2 x_3^2 + x_1 x_2^2 x_3 + x_1^2 x_2 x_3 + 2(x_1 x_2^2 x_3^2 + x_1^2 x_2 x_3^2 + x_1^2 x_2^2 x_3) + 3x_1^2 x_2^2 x_3^2$,
     which is equal to $G_{(2,2,1)}(x_1,x_2,x_3)+G_{(2,1,1)}(x_1,x_2,x_3)$ as predicted by Theorem~\ref{ch-thm}.
     }\label{multiple-fig}
     \end{figure}

\subsection{Set-valued words}\label{svw-sect}

The first half of this section reviews a concrete model for normal $\sqgln$-crystals introduced in \cite{Y}.
Within this model, all crystal operators can be computed by an explicit pairing process generalizing 
the \definition{signature rules} described in \cite[\S2.4]{BumpSchilling}.
The second half of this section presents some technical lemmas that will be needed later.

\begin{defn}
Fix integers $m\geq0$ and $n\geq 1$, and 
let $\SVWordsss{n}{m}$ denote the set of $m$-tuples 
\[ S = (S_1,S_2,\dots,S_m)\quad\text{where each $S_i$ is an arbitrary subset of $[n]=\{1,2,\dots,n\}$.}\]
We refer to elements of $\SVWordsss{n}{m}$ as \definition{set-valued words}.
We view $\SVWordsss{n}{m}$ as a normal $\sqgln$-crystal isomorphic to $(\One_n\sqcup\SS_n)^{\otimes m}$ by identifying 
$S$ with the formal tensor $S_1\otimes S_2\otimes \cdots \otimes S_m$.
\end{defn}

Every connected normal $\sqgln$-crystal is isomorphic to a connected component of $\SVWordsss{n}{m}$ for some $m$.
The weight function for $\SVWordsss{n}{m}$ is
$ \textstyle\weight(S) = \sum_{i \in [m]} \sum_{j \in S_i} \e_i \in \NN^n.$
To describe the crystal operators on $\SVWordsss{n}{m}$ we must 
recall some additional definitions from \cite[\S4]{Y}.

Fix an element $S =(S_1,S_2,\dots,S_m)\in \SVWordsss{n}{m}$.
For each $i \in [n-1]$, 
 the \definition{$i$-word} of $S$ is the following word composed of ``$($'', ``$)$'', and ``$-$''
characters concatenated together.
Read through the sets $S_j$ in $S$ from left to right.
For each set containing 
$i$ but not  $i+1$,
we write the single character ``$)$''.
For each set   containing
$i+1$ but not  $i$,
we write the single character ``$($''.
Finally, for each set containing both $i$ and $i+1$,
we write the three characters ``$)-($''. 

\begin{exa}
The set-valued word
\[S = \left(\ \{2,3\}, \ \{2\},\ \{2,3\},\ \{3,4\},\ \{1\},\ \{2\},\ \{1\}\ \right)
  \in \SVWordsss{4}{7}
\]
has 1-word ``$((()()$'',
2-word ``$)-())-(()$'',
and  3-word ``$)))-($''.
\end{exa}

The $i$-word is divided into equivalence classes in the following way. 
Ignore
the ``$-$'' symbols and pair the parentheses ``$($'' with ``$)$'' in the usual way. Then we require
that if a left parenthesis ``$($'' is paired with a right parenthesis ``$)$'', then these two characters and
everything between them are in the same equivalence class.
We also require that for each ``$)-($'', these three characters are in the same equivalence class.
Each of the resulting equivalence classes is a contiguous sub-word.

\begin{exa}
If the $i$-word is
``$))-(())-())-(()-($''
then its distinct equivalence classes are
\[
{)} 
\quand
{)-(())-()} 
\quand
{)-(}
\quand
{()-(}.\]
\end{exa}

Each equivalence class is designated as a \definition{null form}, a \definition{right form}, a \definition{combined form}, or a \definition{left form}, according to the following conventions:
\begin{enumerate}
\item[(a)] A class is a \definition{null form}
if it has no unpaired ``$($'' or ``$)$'' characters.
For example: ``$(()-())-()$''. 

\item[(b)] A class is a \definition{left form}
if it has no unpaired ``$)$'' characters but
ends with an unpaired ``$($''. 
\newline
This class is either ``$($'' or ``$(u)-($''
for some word $u$.
For example: ``$(())-($''. 

\item[(c)] A  class is a \definition{right form} if it
has no unpaired ``$($'' characters but
starts with an unpaired ``$)$''.
This class is either ``$)$'' or ``$)-(u)$''
for some word $u$.
For example: ``$)-()-()$''. 

\item[(d)] A class is a \definition{combined form} if it
starts with an unpaired ``$)$''   and
ends with an unpaired ``$($''.
This class is either ``$)-($'' or ``$)-(u)-($''
for some word $u$.
For example: ``$)-()-(())-($''. 
\end{enumerate}
There is always at most one combined form in an $i$-word. If we ignore the null forms then the $i$-word read from left to right consists of a sequence of zero or more right forms, followed by at most one combined form, followed by zero or more left forms.

\begin{prop}[\cite{Y}]
\label{sv-e-def}
If $i \in [n-1]$ then $e_i(S) \in\SVWordsss{n}{m}\sqcup \{0\}$ is given as follows:
\begin{itemize}
\item[(a)] If the $i$-word of $S$ has a combined form, 
then we find the entry in $S$ that corresponds to 
``$)-($'' at the end of this combined form.
We remove $i+1$ from this entry to obtain $e_i(S)$.
\item[(b)]  If the $i$-word of $S$ has no combined or left forms,
then we set $e_i(S) = 0$.
\item[(c)] Otherwise, find the first left form
in the $i$-word of $S$, and then 
find the entry in $S$ that corresponds to
``$($'' at the start of this left form.
Add $i$ to this entry to obtain $e_i(S)$.
\end{itemize}
\end{prop}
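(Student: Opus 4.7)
The plan is to proceed by induction on the length $m$ of $S=(S_1,\ldots,S_m)$. The base case $m\leq 1$ is immediate: when $m=0$ the $i$-word is empty and $e_i$ vanishes trivially, and when $m=1$ the $i$-word is one of the four strings $\varnothing$, ``$)$'', ``$($'', or ``$)-($'' (corresponding to the four possibilities for $S_1\cap\{i,i+1\}$), in which case the three clauses of the proposition match the action of $e_i$ on $\One_n\sqcup\SS_n$ specified in Definitions~\ref{trivial-ex} and~\ref{SS-ex} term-by-term.

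For the inductive step, I would write $S=T\otimes U$ with $T=(S_1,\ldots,S_{m-1})$ and $U=S_m$ and apply the tensor product formula \eqref{otimes-eq}, which gives
\[
e_i(S)=\begin{cases} T\otimes e_i(U) & \text{if }\varepsilon_i(T)\leq\varphi_i(U),\\ e_i(T)\otimes U & \text{if }\varepsilon_i(T)>\varphi_i(U).\end{cases}
\]
To convert the comparison between $\varepsilon_i(T)$ and $\varphi_i(U)$ into a statement about $i$-words, I would establish in parallel the auxiliary identities
\[
\varepsilon_i(S)=2\ell(S)+c(S) \qquand \varphi_i(S)=2r(S)+c(S),
\]
where $r(S)$, $\ell(S)$, and $c(S)\in\{0,1\}$ count the right forms, the left forms, and the (at most one) combined form of the $i$-word of $S$. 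These identities can be verified by a simultaneous induction with the proposition itself: each application of the $e_i$ described in the proposition either erases a combined form (reducing $\varepsilon_i$ by $1$) or peels the $($ off the start of a left form (reducing $\varepsilon_i$ by $2$), and the dual statement for $f_i$ follows by symmetry (which one would treat with an analogous proposition or by running the same argument in reverse).

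Granting the auxiliary identities, the key combinatorial input is that the unpaired characters of $w_T$, read left to right, form a block of $r(T)+c(T)$ right parentheses followed by a block of $\ell(T)+c(T)$ left parentheses, and similarly for $w_U$; concatenating $w_T$ and $w_U$ then creates exactly $\min(\ell(T)+c(T),\,r(U)+c(U))$ new cross-boundary pairings between $T$'s trailing $($s and $U$'s leading $)$s. Comparing this count with $\varepsilon_i(T)=2\ell(T)+c(T)$ and $\varphi_i(U)=2r(U)+c(U)$ shows that the inequality in the tensor product rule precisely distinguishes whether all of $T$'s unpaired $($s are absorbed at the boundary or whether some survive, which in turn determines whether the combined form of $S$ (if any) and its distinguished trailing ``$)-($'' live in the $T$-portion or the $U$-portion of $w_S$. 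In each subcase one then checks directly that the entry in $S$ modified by the recipe in the proposition agrees with the entry modified by the tensor product rule.

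The hardest part will be the boundary bookkeeping, especially when the trailing $($ of $T$'s combined form pairs with a $)$ from $U$: in such situations several equivalence classes of $w_T$ and $w_U$ can merge into a single class of $w_S$, a new combined form may be created or destroyed, and the distinguished ``$)-($'' singled out by case (a) may shift from one side of the boundary to the other. Resolving this requires careful tracking of the $-$ connections, which are preserved by concatenation; once one confirms that the rightmost ``$)-($'' of $w_S$'s combined form always originates from the set whose entry the tensor product rule would modify, the remaining subcases reduce to straightforward verifications.
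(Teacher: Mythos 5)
The paper itself contains no proof of Proposition~\ref{sv-e-def}: it is quoted from \cite{Y}, and the equivalence of this recipe with the tensor-product operators \eqref{otimes-eq} is explicitly deferred to \cite[Thm.~4.28]{MT}. So your strategy --- peel off the last letter, write $S=T\otimes U$, apply \eqref{otimes-eq}, and translate the comparison of $\varepsilon_i(T)$ with $\varphi_i(U)$ into parenthesis combinatorics via the identities $\varepsilon_i=2L+C$, $\varphi_i=2R+C$ --- is the natural one and is in the spirit of the argument the paper points to.

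However, the pivot of your case analysis is false as stated. You claim that $\varepsilon_i(T)\leq\varphi_i(U)$ holds precisely when all of $T$'s unpaired left parentheses are absorbed at the boundary, i.e.\ precisely when $\ell(T)+c(T)\leq r(U)+c(U)$. With $\varepsilon_i(T)=2\ell(T)+c(T)$ and $\varphi_i(U)=2r(U)+c(U)$, these two inequalities disagree exactly when $c(T)=0$, $c(U)=1$, and $\ell(T)=r(U)+1$. Concretely, take $i=1$ and $S=(\{2\},\{1,2\})$, so $T=(\{2\})$ and $U=\{1,2\}$: then $\varepsilon_1(T)=2>1=\varphi_1(U)$, so the tensor rule acts on the $T$-side, yet $T$'s single unpaired ``$($'' is absorbed by the leading ``$)$'' of $U$'s ``$)-($'', and the whole $1$-word of $S$ is a single left form ``$()-($'' straddling the boundary. (Both recipes do return $(\{1,2\},\{1,2\})$ here, so the proposition survives, but your dichotomy --- and hence the claim that the inequality decides whether the distinguished character lies in the $T$- or $U$-portion --- does not, so the subcases you propose to ``check directly'' are not the right ones; this parity-offset case must be handled separately, by showing the merged class is a left form of the $i$-word of $S$ whose initial ``$($'' is that of the first left form of $T$, so that case (c) for $S$ and case (c) for $T$ modify the same entry.) A smaller symptom of the same looseness: in your simultaneous induction for the auxiliary identities, case (c) does not reduce $\varepsilon_i$ by $2$; adding $i$ at the start of the first left form converts that left form into a combined form, so each application of $e_i$ decreases $2L+C$ by exactly $1$. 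The overall architecture is sound and repairable, but as written the central reduction step fails.
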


For completeness, we also recall the formula for the lowering operators on $\SVWordsss{n}{m}$.

\begin{prop}[\cite{Y}]
If $i \in [n-1]$ then $f_i(S) \in\SVWordsss{n}{m}\sqcup \{0\}$ is given as follows:
\begin{itemize}
\item[(a)] If the $i$-word of $S$ has a combined form, 
then we find the entry in $S$ that corresponds to 
``$)-($'' at the beginning of this combined form.
We remove $i$ from this entry to obtain~$f_i(S)$.
\item[(b)] 
If the $i$-word of $S$ has no combined or right forms,  
then we set $f_i(S) = 0$.
\item[(c)] Otherwise, find the last right form
in the $i$-word of $S$, and then
find the entry in $S$ that corresponds to
``$)$'' at the end of this right form.
Add $i+1$ to this entry to obtain $f_i(S)$.
\end{itemize}
\end{prop}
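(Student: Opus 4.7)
The plan is to derive the formula for $f_i$ from Proposition~\ref{sv-e-def} via the crystal axiom of Definition~\ref{sqgln-def}(b), which asserts that $f_i$ and $e_i$ are inverse partial operations on $\SVWordsss{n}{m}$: $f_i(S)=S'$ if and only if $e_i(S')=S$. Since $f_i$ is uniquely determined by the crystal structure, it suffices in each case to check that the operation described in the statement produces an $S'$ with $e_i(S')=S$, or to argue that no such $S'$ exists. The three cases mirror the cases of the $e_i$ formula under the natural involution on the $i$-word that reverses it and swaps ``$($'' with ``$)$'', which exchanges combined-form beginnings with endings and left forms with right forms.

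For case (a), write the combined form of $S$'s $i$-word in characters as ``$)-(\beta)-($'' with $\beta$ balanced, and let the initial ``$)-($'' come from entry $S_j$. Removing $i$ from $S_j$ changes its contribution to ``$($'', so this stretch of $S'$'s $i$-word becomes ``$(\beta)-($''. Since $\beta$ is balanced, the new leading ``$($'' pairs with the ``$)$'' at the end of the former interior $(\beta)$, and ``$(\beta)-($'' collapses into a single left form whose unpaired ``$($'' is the terminal triple. This is the leftmost left form of $S'$'s $i$-word, since the stretch to its left --- unchanged from $S$ --- contains only null and right forms. Applying clause (c) of Proposition~\ref{sv-e-def} then adds $i$ back to the entry at the leading ``$($'' of this left form, which is the modified $S_j$, recovering $S$. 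Case (c) is the dual: adding $i+1$ to the entry at the trailing ``$)$'' of the last right form of $S$ creates a new ``$)-($'' triple at that position, which upon re-pairing becomes the initial triple of a combined form in $S'$'s $i$-word, after which clause (a) of Proposition~\ref{sv-e-def} reverses the modification. For case (b), I would verify that any $i$-word in the image of $e_i$ contains a combined or right form: clause (a) of $e_i$ converts the combined form ``$)-(\beta)-($'' into ``$)-(\beta)$'', which re-pairs as a right form, while clause (c) of $e_i$ replaces the leading ``$($'' of a left form by ``$)-($'', producing a combined form. Hence an $i$-word with neither form lies outside the image of $e_i$ and must have $f_i(S)=0$.

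The main obstacle is the bookkeeping for how equivalence classes in the $i$-word reshuffle after a single entry's contribution changes. Although only one, or three, characters are altered at one position, the global paren-pairing can propagate the effect across multiple classes, so one must verify carefully that the altered classes take the expected shape (combined, left, or right form) and appear in the expected position relative to neighboring classes. Since the change is confined to a single entry, this reshuffling is ultimately controlled by standard paren-matching arguments, and the verification reduces to a careful but mechanical enumeration of subcases, including degenerate cases where the combined form in question is a single triple or where the right form is just ``$)$''.
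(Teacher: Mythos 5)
The paper itself offers no proof of this proposition: both crystal-operator formulas are quoted from \cite{Y}, where they are taken as definitions, and the fact that they compute the tensor-product operators on $(\One_n\sqcup\SS_n)^{\otimes m}$ is delegated to \cite[Thm.~4.28]{MT}. Your route is therefore genuinely different: taking Proposition~\ref{sv-e-def} and the axiom of Definition~\ref{sqgln-def}(b) as given, you characterize $f_i(S)$ as the unique $S'$ with $e_i(S')=S$ (and $f_i(S)=0$ exactly when $S$ is not in the image of $e_i$), and then verify the three clauses by a parenthesis-matching argument, including the observation that every element of the image of $e_i$ has a combined or right form in its $i$-word, which settles clause (b). This is a legitimate, self-contained derivation; what it buys is independence from the inductive tensor-product computation in \cite{MT}, and what it costs is that it presupposes the $e_i$ formula, which in this paper is equally a citation. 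The bookkeeping you defer is exactly the right thing to check, and it works because deleting an unpaired ``$)$'' (case (a)) or inserting a ``$($'' that can never be matched (case (c)) does not disturb the pairing of the remaining parentheses, so no equivalence classes outside the modified one are affected.

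One local correction: in your case (c), the new ``$)-($'' created at the trailing ``$)$'' of the last right form is the \emph{terminal} triple of the resulting combined form ``$)-(v)-($'' (it coincides with the initial triple only in the degenerate case of a lone ``$)$''). This is precisely what your argument needs, since clause (a) of Proposition~\ref{sv-e-def} removes $i+1$ from the entry at the \emph{end} of the combined form; as you wrote it (``initial triple''), that clause would act on the entry at the opposite end and would not undo your modification. With this identification corrected, and with the routine check that the new unpaired ``$($'' cannot pair with any later character because no unpaired ``$)$'' occurs to the right of the last right form, your plan goes through.
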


\begin{exa}
The following shows how $f_2$ is applied successively to an element of $\SVWordsss{4}{7}$:
\[
\begin{aligned}
(\ \{2,3\},\ 
\{2\},\ 
\{2\},\ 
\{3,4\},\  
\{1\},\ 
\{2\},\ 
\{1\}\ ) 
&\xrightarrow{\ f_2\ }
(\ \{2,3\},\ 
\{2\},\ 
\{2,3\},\ 
\{3,4\},\ 
\{1\},\ 
\{2\},\ 
\{1\}\ )
\\
&\xrightarrow{\ f_2\ }
(\ \{2,3\},\ 
\{2\},\ 
\{3\},\ 
\{3,4\},\ 
\{1\},\ 
\{2\},\ 
\{1\}\ )
\\
&\xrightarrow{\ f_2\ }
(\ \{2,3\},\ 
\{2,3\},\ 
\{3\},\ 
\{3,4\},\ 
\{1\},\ 
\{2\},\ 
\{1\}\ )
\\
&\xrightarrow{\ f_2\ }
(\ \{3\},\ 
\{2,3\},\ 
\{3\},\ 
\{3,4\},\ 
\{1\},\ 
\{2\},\ 
\{1\}\ ) \xrightarrow{\ f_2\ }0.
\end{aligned}
\]
\end{exa}

The formulas for $e_i(S)$ and $f_i(S)$ in these propositions first appeared in \cite{Y} as definitions.
To see why they give the same result as \eqref{otimes-eq} inductively applied to $(\One_n\sqcup\BB_n)^{\otimes m}$, see \cite[Thm.~4.28]{MT}.

The following useful observation is implicit in \cite{Y} and straightforward to derive from the previous two propositions.

\begin{cor}
Let $i \in [n-1]$ and $S \in \SVWordsss{n}{m}$.
Then $\varepsilon_i(S) = 2L+C$ and $ \varphi_i(S) = 2R+C$
where $L\in\NN$, $R\in\NN$, and $C\in\{0,1\}$ count the left, right, and combined forms in the $i$-word of $S$.
\end{cor}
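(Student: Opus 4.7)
The plan is to determine exactly how a single application of $e_i$ (respectively $f_i$) changes the triple $(L,R,C)$, and then iterate. I claim the following transition rules for $e_i$, which are read directly off Proposition~\ref{sv-e-def}: if $C=1$ then $e_i$ sends $(L,R,1)\mapsto(L,R+1,0)$; if $C=0$ and $L\ge 1$ then $e_i$ sends $(L,R,0)\mapsto(L-1,R,1)$; and if $C=0$ and $L=0$ then $e_i(S)=0$. Granted these, induction on $2L+C$ gives $\varepsilon_i(S)=2L+C$, and the symmetric analysis of $f_i$ yields $\varphi_i(S)=2R+C$.

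To justify the transition rules I would use the structural fact recalled in the excerpt: after dropping the dashes and the interiors of the null forms, the $i$-word reads left-to-right as a block of right forms, then at most one combined form, then a block of left forms. For the first rule ($C=1$), $e_i$ removes $i+1$ from the entry giving the trailing ``$)-($'' of the combined form. Since this combined form is ``$)-($'' or ``$)-(u)-($'' with $u$ null, the trailing ``$)-($'' collapses to ``$)$'' and the combined form becomes a new right form (``$)$'' or ``$)-(u)$'') appended to the existing right-form block; nothing else changes. For the second rule ($C=0$ and $L\ge 1$), $e_i$ inserts $i$ into the entry giving the leading ``$($'' of the first left form, converting that leading ``$($'' into ``$)-($''. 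The first left form (``$($'' or ``$(u)-($'' with $u$ null) thereby turns into a combined form (``$)-($'' or ``$)-(u)-($''). Since everything to the left of this form consists only of right forms and null forms, no merging with other forms can occur, and the only changes to the counts are $L\mapsto L-1$ and $C\mapsto 1$.

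Given these rules, the iteration is immediate. Starting from $(L,R,0)$ with $L\ge 1$, the successive states are $(L-1,R,1),(L-1,R+1,0),(L-2,R+1,1),(L-2,R+2,0),\dots,(0,R+L,0)$, after which the process halts; this takes $2L$ steps. Starting from $(L,R,1)$, one extra step reaches $(L,R+1,0)$, after which the previous count applies to give $2L$ further steps, for a total of $2L+1$. In both cases $\varepsilon_i(S)=2L+C$. The argument for $\varphi_i(S)=2R+C$ is completely analogous, using the dual proposition for $f_i$ with the roles of left and right forms interchanged.

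The most delicate step is the form-level bookkeeping in the two transition rules: one must check that altering a single parenthesis at a form boundary really does produce exactly one new form of the asserted type, and does not silently merge with a neighbouring right-form or left-form block. This reduces to the observation that the local parenthesis pairing around the altered site is completely controlled by what was paired with what before the operator was applied, together with the fact that null forms are self-contained balanced blocks and so are undisturbed by the modification. Once this bookkeeping is in place, the remainder of the proof is mechanical.
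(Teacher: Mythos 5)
Your proof is correct: the paper offers no argument for this corollary, merely calling it ``straightforward to derive from the previous two propositions,'' and your bookkeeping of how $e_i$ and $f_i$ update the triple $(L,R,C)$ — combined form $\to$ right form, first left form $\to$ combined form (dually for $f_i$), with the check that no neighbouring forms merge — is exactly that intended derivation. The iteration count $2L+C$ (resp.\ $2R+C$) then follows as you say, so your write-up simply fills in the omitted details along the paper's implicit route.
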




We can easily identify the highest weight elements of $\SVWordsss{n}{m}$ 
in terms of certain  tableaux.
For this purpose, a \definition{tableau} means an
assignment of integers (or finite, non-empty sets of integers) to a finite set of boxes $(i,j) \in \PP\times\PP$.
As in Section~\ref{sym-intro}, we write $(i,j)\in T$ to indicate that $(i,j)$ is a box in a tableau $T$ and we write $T_{ij}$ to denote the value in this box.

\begin{defn}
An \definition{increasing tableau} is a tableau whose boxes make up the Young diagram of a partition and whose 
rows and columns are strictly increasing (in the direction of increasing indices).
Let $\IncT{n}{m}$ be the set of 
increasing tableaux filled by numbers from
$[m]$ with at most $n$ rows. 
\end{defn}

For each $S= (S_1,S_2, \dots, S_m) \in \SVWordsss{n}{m}$ define $\Tab(S)$ to be the tableau given by the following left-justified collection of boxes:
for each $i \in [n]$, the
 $i$th row of $\Tab(s)$ consists of the numbers $m+1-j$ listed in increasing order as $j$ ranges over the indices in $[m]$ with $i \in S_j$.
In other words, box $(i,c)$ of $\Tab(S)$
contains $j$ 
if the $c\textsuperscript{th}$
rightmost $i$ in $S$ is in $S_{m + 1 - j}$.
The shape of $\Tab(S)$ might not be the Young diagram of a partition, and its columns might not be increasing. However, $\Tab(S)$ has all entries in $[m]$ and at most $n$ rows, which are all strictly increasing.

\begin{exa} If  
$S=(\{1,3\},\{2\},\{1,2\},\{2\})$ then $m=4$ and 
$
\ytableausetup{aligntableaux=center}
\Tab(S) = \ytab{
2&4\\
1&2&3\\
4
}$.
\end{exa}

In the reverse direction,
given a tableau $T$ with all entries in $[m]$ and at most $n$ rows, all strictly increasing,
define $\gamma(T) \in \SVWordsss{n}{m}$ to be the set-valued word $(S_1,S_2,\dots,S_m)$
that has $i \in S_j$ if and only if  $m + 1 - j$ appears in row $i$ of $T$.
Clearly $\Tab(\gamma(T))=T$ and $\gamma(\Tab(S)) = S$.

\begin{prop}\label{tab-highest}
A set-valued word $S \in \SVWordsss{n}{m}$ is a highest weight element if and only if it holds that $\Tab(S) \in \IncT{n}{m}$.
Consequently, $\Tab$
defines a bijection
$\HW(\SVWordsss{n}{m})\to \IncT{n}{m}$.
\end{prop}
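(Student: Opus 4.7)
The plan is to unpack the highest-weight condition $e_i(S)=0$ via Proposition~\ref{sv-e-def} and translate it, parenthesis-by-parenthesis, into the shape and column-strictness conditions that define $\IncT{n}{m}$. By Proposition~\ref{sv-e-def}, $S$ is highest weight iff for every $i\in[n-1]$ the $i$-word of $S$ contains neither a combined form nor a left form. Unpacking the definitions of these equivalence classes, this is the same as saying that no ``$($'' in the $i$-word is unpaired, which by the standard bracket-matching argument is equivalent to the requirement that every suffix of the $i$-word contain at least as many ``$)$'' as ``$($'' characters.

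The next step is to re-express this suffix condition in terms of the sets $S_j$. Every ``$($'' in the $i$-word comes from some $j_0\in[m]$ with $i+1\in S_{j_0}$: as a standalone ``$($'' when $i\notin S_{j_0}$, and as the right half of ``$)-($'' when $i\in S_{j_0}$. Counting opens and closes in the suffix that starts at this ``$($'' yields, in both cases, the same inequality
\[
|\{j\geq j_0 : i+1\in S_j\}| \;\leq\; |\{j>j_0 : i\in S_j\}| \qquad(\star)
\]
for every $j_0\in[m]$ with $i+1\in S_{j_0}$. Suffixes that begin at a ``$)$'' character impose only weaker conditions (they contain an extra ``$)$'' on the left), so $(\star)$ captures exactly the highest-weight condition.

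Finally, I translate $(\star)$ into a statement about $\Tab(S)$. Let $j^{(i)}_1<j^{(i)}_2<\dots<j^{(i)}_{\beta_i}$ be the positions of $i$ in $S$, where $\beta_i=|\{j:i\in S_j\}|$. By construction the $c$th entry of row $i$ of $\Tab(S)$ equals $m+1-j^{(i)}_{\beta_i-c+1}$, so rows are automatically strictly increasing, with entries in $[m]$ and at most $n$ rows. Applying $(\star)$ at $j_0=j^{(i+1)}_d$ and setting $c=\beta_{i+1}-d+1$ converts the inequality into $j^{(i)}_{\beta_i-\beta_{i+1}+d}>j^{(i+1)}_d$, which says precisely that the $c$th entry of row $i$ is strictly less than the $c$th entry of row $i+1$; taking $d=1$ also forces $\beta_{i+1}\leq\beta_i$, so partition shape follows. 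Combining with automatic row-strictness and the entry/row bounds, $(\star)$ for all $i,j_0$ is equivalent to $\Tab(S)\in\IncT{n}{m}$.

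The main delicate point is the case analysis in the middle step: whether the ``$)$'' at position $j_0$ itself is or is not counted in the suffix that begins at the ``$($'' at $j_0$ depends on whether $S_{j_0}$ contains $i$, and reconciling the two cases is exactly what produces the slightly asymmetric form of $(\star)$. Once this is handled, the remaining index manipulations are mechanical.
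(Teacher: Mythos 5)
Your proof is correct. It reaches the same pivotal condition as the paper --- that for each $i$ and each $c$, the $c$\textsuperscript{th} rightmost $i+1$ in $S$ must be strictly preceded (in position) by a $c$\textsuperscript{th} rightmost $i$, which is exactly column-strictness plus the shape condition for $\Tab(S)$ --- and the final index translation is essentially identical to the paper's. Where you differ is the first step: the paper simply cites \cite[Lem.~4.11]{MT}, which says $S$ is highest weight if and only if $w(S)$ is a reverse lattice word, and then translates; you instead derive the suffix inequality $(\star)$ directly from the raising-operator formula of Proposition~\ref{sv-e-def}, observing that ``no combined or left forms'' means ``no unpaired $($'' and then running the standard bracket-matching argument, with the case analysis for the ``$)-($'' triples. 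This makes the proposition self-contained within this paper (no appeal to the external lemma) at the cost of a somewhat longer argument, and it is arguably a cleaner statement of the condition than the paper's first prose reformulation, which is only accurate as restated in its ``equivalently'' sentence. Two small points you gloss over but which are easily fixed: the reduction to suffixes beginning at a ``$($'' should also dispose of suffixes beginning at a ``$-$'' (they have the same counts as the suffix at the following ``$($''), and the inequality $j^{(i)}_{\beta_i-\beta_{i+1}+d}>j^{(i+1)}_d$ should be read as including the existence requirement $\beta_i\geq\beta_{i+1}-d+1$, which your $d=1$ remark supplies.
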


\begin{proof}
As explained in \cite[Lem.~4.11]{MT}, 
$S$ is a highest weight element 
if and only if $w(S)$ is a reverse lattice word.
This condition means that if $i+1 \in S_{j}$,
then among $S_{j+1}, \dots, S_m$,
the number of $i$'s is weakly larger than the
number of $i+1$'s.
Equivalently,
if the $c^\textsuperscript{th}$ rightmost $i+1$ in $S$
appears in $S_j$, 
then the $c^\textsuperscript{th}$ rightmost $i$
must exist and appear in $S_{j'}$ with $j' > j$.
We may translate this statement into
a statement about $\Tab(S)$:
whenever $\Tab(S)_{i+1, c}$ exists, 
$\Tab(S)_{i, c}$ also exists and $\Tab(S)_{i,c} < \Tab(S)_{i+1,c}$.
This is equivalent to saying that $\Tab(S)$
is an increasing tableau.
\end{proof}

\begin{exa}
The tuples $R=(\{1,3\},\{1\},\{1,2\},\{1\})$
and $S=(\{1,3\},\{1\},\{2\},\{1\})$
are two highest-weight elements of $\SVWordsss{3}{4}$.
Their corresponding increasing tableaux
are 
$$
\ytableausetup{boxsize = .6cm,aligntableaux=center}
\Tab(R) = \begin{ytableau}
1 & 2 & 3 & 4 \\
2\\
4
\end{ytableau}
\qquand
\Tab(S)=\begin{ytableau}
1  & 3 & 4 \\
2 \\
4
\end{ytableau}.
$$
\end{exa}

For the rest of this section we fix a set-valued word $S = (S_1,S_2,\dots,S_m) \in \SVWordsss{2}{m}$, that is, with each $S_j \subseteq \{1,2\}$. 
Define $E_1(S) := e_1^{k}(S)$ where $k=\varepsilon_1(S)$.
 Equivalently, $E_1(S)$ is the result of applying $e_1$ to $S$ as many times as possible before reaching zero.
We state two technical lemmas concerning  $E_1(S)$ and the tableau $\Tab(S)$.

Consider the equivalence classes in the $1$-word of $S$.
We say that one of these classes \definition{starts in position $i$} and \definition{ends in position $j$} if 
$S_i$ contains the number
that contributes the parenthesis at the start of the class
while $S_j$ contains the number that contributes the parenthesis at the end.
Suppose that in the $1$-word of $S$, the unique combined form, if it exists, starts in position $i_0$ and ends in position $j_0$.
Assume there are exactly $t$ left forms, which start in positions $i_1<i_2<\dots<i_t$ and end in positions $j_1<j_2<\dots<j_t$.

Notice that $S$ must have 
$S_{i_0} = S_{j_0}= \{1,2\}$  when there is a combined form.
Likewise, for each $k=1,2,\dots,t$,
one has 
$S_{i_k} = \{2\}$ when $i_k=j_k$,
while 
$S_{i_k} = \{2\}$ and $S_{j_k} = \{1,2\}$ when $i_k<j_k$.
The following is immediate from Proposition~\ref{sv-e-def}.

\begin{lemma}\label{E1-lem}
    In the preceding setup, the set-valued word $E_1(S)$ is formed from $S$ by removing 
    $2$ from each of the sets $S_{j_0},S_{j_1},S_{j_2},\dots, S_{j_t}$
    and adding $1$ to each of the sets $S_{i_1},S_{i_2},\dots, S_{i_t}$.
\end{lemma}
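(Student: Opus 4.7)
The plan is to apply Proposition~\ref{sv-e-def} iteratively to $S$, tracking how the $1$-word changes after each application of $e_1$.

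First, suppose $S$ has a combined form. By Proposition~\ref{sv-e-def}(a), the initial $e_1$ removes $2$ from $S_{j_0}$. Position $j_0$ then contributes ``$)$'' rather than ``$)-($'' in the $1$-word, and one checks that the former combined form (either ``$)-($'' or ``$)-(u)-($'') becomes the right form ``$)$'' or ``$)-(u)$'' respectively, while the positions and internal structure of the left forms are unchanged, since the modification is local to $S_{j_0}$.

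Next, I would process the left forms in order from first to last. The inductive hypothesis is that after some number of iterations we have reduced to a word whose $1$-word has no combined form, whose remaining left forms are exactly those indexed by $(i_k,j_k)$ for $k \geq K$, and whose portion preceding the $K$-th left form consists only of right forms and null forms. By Proposition~\ref{sv-e-def}(c), applying $e_1$ adds $1$ to $S_{i_K}$, changing it from $\{2\}$ to $\{1,2\}$ so that position $i_K$ contributes ``$)-($'' in place of ``$($''. The key verification is that the newly introduced ``$)$'' at $i_K$ is unpaired: no unpaired ``$($'' lies to its left because everything to the left is built out of right forms and null forms. Hence this ``$)$'' joins the equivalence class of the former $K$-th left form, converting it from ``$($'' or ``$(u_K)-($'' into the combined form ``$)-($'' or ``$)-(u_K)-($''. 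Applying Proposition~\ref{sv-e-def}(a) once more then removes $2$ from $S_{j_K}$ (which coincides with $S_{i_K}$ when $i_K=j_K$), turning this new combined form into a right form by the same local argument as in the first step.

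Iterating until all $t$ left forms have been processed yields a $1$-word containing only right and null forms, at which point Proposition~\ref{sv-e-def}(b) gives $e_1 = 0$, so we have reached $E_1(S)$. Accumulating the modifications shows that $E_1(S)$ differs from $S$ exactly by removing $2$ from each of $S_{j_0},S_{j_1},\dots,S_{j_t}$ and adding $1$ to each of $S_{i_1},\dots,S_{i_t}$, as claimed. The main technical point is the bookkeeping about equivalence classes after each $e_1$: one must verify that local modifications do not cause classes to merge or split unexpectedly, which follows from the fact that the canonical left-to-right order of right forms, then the unique combined form (if any), then left forms, with null forms interspersed, is preserved at every stage.
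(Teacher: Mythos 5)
Your proposal is correct and follows exactly the route the paper intends: the paper states Lemma~\ref{E1-lem} as immediate from Proposition~\ref{sv-e-def}, and your argument is simply the detailed unwinding of that—iterating $e_1$, converting the combined form (if any) and then each left form in turn into a right form, with the bookkeeping that the new unpaired ``$)$'' cannot pair to the left since only right and null forms precede it. So this matches the paper's approach, just spelled out more explicitly.
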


When $S$ is a highest weight element, its $1$-word must have no left or combined forms. In this event, the increasing tableau $\Tab(S)$ has some properties which we note for later use:

\begin{lemma}\label{highest-lem}
Assume $S \in \SVWordsss{2}{m}$ is a highest weight element. Let $T = \Tab(S)$.
\begin{enumerate}

\item[(a)]
Suppose the $1$-word of $S$ ends with a null form that starts in position $i$.
Then the number of entries in the subsequence $(S_i,S_{i+1},\dots,S_m)$ 
containing $1$ is the same as the number of entries containing $2$.
Suppose this number is $r$. Then $T_{2,r} = m + 1 - i$.
Moreover, $r$ is the smallest positive number such that
$(1, r+1) \notin T$ or $T_{2,r} < T_{1, r+1}$.

\item[(b)]
Suppose the $1$-word of $S$ ends with a right form.
Assume $T $ has $q$ boxes in row one and $p$ boxes in row two. Then 
$p<q$ and 
$T_{2,k} \geq T_{1,k+1}$ for all $1\leq k \leq p$.
\end{enumerate}
\end{lemma}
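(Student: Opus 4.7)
The plan is to prove both parts by analyzing the 1-word of $S$ together with its equivalence-class decomposition into right and null forms (no left or combined forms, since $S$ is highest weight). Two running counts on the 1-word will be central: the difference $c_j$ between the numbers of ``('' and ``)'' characters contributed by $S_1, \ldots, S_j$, and the standard left-to-right matching counters $s_j, u_j \geq 0$ of pending unmatched ``('' and ``)'' characters, which satisfy $c_j = s_j - u_j$ and $s_m = 0$. Throughout I abbreviate $d := q - p$.

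For part (a), the null form condition immediately gives $\#\{j \in [i,m] : 1 \in S_j\} = \#\{j \in [i,m] : 2 \in S_j\} = r$. Since a null form must begin with ``('', the only option is $S_i = \{2\}$, and because every position $<i$ with $2 \in S$ is strictly smaller than $i$, this forces $T_{2,r} = m+1-i$. At $k = r$ the minimality condition holds automatically: the $(r+1)$-th largest position with $1 \in S$ (if it exists) lies in $[1, i-1]$, so $T_{1,r+1} > T_{2,r}$. For $k < r$, setting $\ell = r - k \in [1, r-1]$ and letting $a_1 < \cdots < a_r$ and $b_1 < \cdots < b_r$ list the positions in $[i,m]$ containing $2$ and $1$ respectively, the claim $T_{2,k} \geq T_{1,k+1}$ rewrites as $a_{\ell+1} \leq b_\ell$, equivalently $c_{b_\ell} \geq 1$ relative to the start of the null form. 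I plan to prove this last inequality from the fact that inside a single null form the running count cannot drop to $0$ strictly before the end $j_0 = b_r$: a zero at an interior point would produce a balanced prefix disconnected from the suffix, and since a ``)-('' triple lives inside a single position it cannot bridge two separate positions, so the null form would split into at least two equivalence classes --- contradicting the hypothesis that it is a single class.

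For part (b), the inequality $p < q$ is immediate: the 1-word has $p$ ``('' and $q$ ``)'' characters, and the ending right form contributes at least one unmatched ``)''. The second inequality I plan to prove by contradiction. Assuming $T_{2,k} < T_{1,k+1}$ for some $k \in [1,p]$, let $j^*$ denote the $k$-th largest position with $2 \in S$. Counting entries in $[1, j^*-1]$ gives $u_{j^*-1} - s_{j^*-1} \geq d$; combining with $u_{j^*-1} \leq u_m = d$ and $s_{j^*-1} \geq 0$ forces $s_{j^*-1} = 0$ and $u_{j^*-1} = d$. This rules out $S_{j^*} = \{1,2\}$ (which would push $u_{j^*}$ to $d+1$ in excess of $u_m$), so $S_{j^*} = \{2\}$, and $u$ stays at $d$ throughout $[j^*-1, m]$. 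Consequently every ``('' in $[j^*, m]$ is matched with a ``)'' in $[j^*, m]$, and no matched pair bridges the cut between $j^*-1$ and $j^*$. The last right form must contain the last nonempty position $j_{\mathrm{end}} > j^*$ while also carrying an unmatched ``)'' somewhere in $[1, j^*-1]$; since neither paren matching nor ``)-('' can connect the two halves, the last equivalence class in fact lies entirely in $[j^*, m]$ and is balanced --- a null form, not a right form --- contradicting the hypothesis.

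I expect the main obstacle in both parts to be this no-bridge argument: translating a local equality in the matching counters $s_j, u_j$ into the global statement that equivalence classes cannot connect across a cut point, using only the two generating operations (matched parenthesis pairs with everything strictly between, and ``)-('' triples at a single position). Once this is in hand, the counting identities and the structural properties of highest weight elements yield both (a) and (b) immediately.
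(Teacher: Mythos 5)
Your proposal is correct and follows essentially the same route as the paper: analyze the terminal equivalence class of the $1$-word, use that highest weight rules out left and combined forms, and translate the tableau inequalities $T_{2,k}\geq T_{1,k+1}$ into statements about the relative positions of the $1$s and $2$s. The only real difference is presentational: your running counters $c_j,s_j,u_j$ and the no-bridge observation (an equivalence class cannot straddle a cut at which the parenthesis count balances, since matched pairs and ``$)-($'' triples cannot cross such a cut) spell out exactly the steps the paper leaves as ``straightforward to check,'' namely that a minimal violation would force the terminal null form to start later in part (a), or would make the last class a null form rather than a right form in part (b).
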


\begin{proof}
Assume we are in the situation of part (a). Then the $1$-word of $(S_i,S_{i+1},\dots,S_m)$ is a null form
where all parentheses are paired.
Thus, the number of left parentheses matches the number of 
right parentheses,
so $(S_i,S_{i+1},\dots,S_m)$ has the same number $r$ of $1$s
and $2$s.

By the definition of $\Tab(S)$, the tableau $T$ has
$T_{1,1} = m+1-j$ and $T_{2,r} = m+1-i$. Because $S$ is a highest weight element, $T$ is increasing.
Using this property, it is straightforward to check that if there is any $1\leq k < r$ with $T_{2,k} < T_{1,k+1}$, and $k$ is minimal with this property, then the 
terminal null form in the $1$-word of $S$ begins in position $m+1-T_{2,k} > m+1-T_{2,r}=i$.
As this contradicts the definition of $i$, we must have $T_{2,k} \geq T_{1,k+1}$ for all $1\leq k < r$.

Finally, if $(1,r+1) \in T$
then we cannot have $T_{2,r} \geq T_{1,r+1}$,
as then 
we would have $1 \in S_t$ for each of the $r+1$ indices $t=m+1-T_{1,k}$ with $1\leq k \leq r+1$,
and these indices would
all satisfy $t \geq m+1 - T_{1,r+1} \geq m+1 - T_{2,r} = i$.
This contradicts the fact that 
exactly $r$ of the sets $S_i,S_{i+1},\dots, S_m$ contain $1$.

Now there are $r$ sets containing $1$ and $r$ sets containing $2$ among $S_i, \dots, S_m$.
Since the null form starts in position $i$,
we know that $S_i = \{2\}$,
which is the $r^\textsuperscript{th}$
rightmost $2$.
Thus, $T_{2,r} = m+1 - i$.
The $(r+1)^\textsuperscript{th}$
rightmost $1$, if it exists,
is on the left of $S_i$,
so $T_{1,r+1} >  m+1 - i$.
Finally, assume 
toward a contradiction that
$T_{2, k} < T_{1, k+1}$
for some $r <k \leq m$.
Let $i' = m + 1 - T_{2, k}$,
so that $i' > i$.
We have  $T_{1, k} < T_{2, k} = m+1 - i' < T_{1, k+1}$.
Thus, among $S_{i'}, \dots, S_m$,
there are $k$ sets containing $1$ and $k$ sets containing $2$.
Since $S$ is a highest weight element, 
the $2$s correspond to paired right parentheses.
Thus, the $1$-word of $S_{i'}, \dots, S_m$ is non-empty
and consists of only paired parentheses,
so the last null form in $S$ cannot start at 
index $i$. This is a
contradiction.

The preceding arguments prove part (a). We turn to part (b). Assume the $1$-word of $S$ ends with a right form. It is clear from the definition of $\Tab(S)$ that $q$ is the number of entries of $S$ containing $1$ while $p$ is the number of entries containing $2$. 
Since $S$ is a highest weight element, its $1$-word has no left or combined forms, so $q-p$ is equal to the number of right forms in the $1$-word. As there is at least one of these by hypothesis, we must have $p<q$.

Finally, again
using the fact that $T$ is increasing, one checks that if there were any minimal $k$ with $1\leq k \leq p$ and $T_{2,k} < T_{1,k+1}$, then the last equivalence class in the $1$-word of $S$ would be a null form (starting in position $m+1-T_{2,k}$
and ending in position $m+1-T_{1,1}$).
This contradicts our assumption that the $1$-word ends in a right form, so $T_{2,k} \geq T_{1,k+1}$ for all $1\leq k \leq p$.
\end{proof}

\subsection{Rectification }\label{rect-sect}

This section  studies an operator that sends each element in a normal $\sqgln$-crystal to a highest weight element in the same connected component.

Let $\cB$ be a $\sqgln$-crystal and fix $i \in [n-1]$.
Each element $b\in \cB$ belongs to a unique path consisting of $\xrightarrow{i}$ edges, which we refer to as the  \definition{$i$-string} of $b$.
Generalizing the definition of $E_1$ in the previous section, let $E_i : \cB \to \cB$ be the operator that sends $b$ to the beginning of its $i$-string, so that 
$
E_i(b) = e_i^{k}(b)
$ for $k:=\varepsilon_i(b) \in \NN$. 
Then define 
\[ 
\rect := (E_1E_2 \cdots E_{n-1}) \cdots (E_1 E_2 E_3)\ (E_1 E_2) (E_1).
\]
This gives a map $\rect:\cB \to \cB$
with the property that $b$ and $\rect(b)$ always belong to the same connected component.
The goal of this section is to prove the following much less obvious result. 

\begin{thm}\label{rect-thm} If $\cB$ is a normal $\sqgln$-crystal then $\rect(b)$ is highest weight for all $b \in \cB$.
\end{thm}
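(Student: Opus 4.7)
The plan is to reduce to the explicit model $\SVWordsss{n}{m}$ via normality and then argue by a short induction whose crux is a single technical lemma about the interaction of adjacent raising chains.

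First, since $\cB$ is normal, each element lies in a full subcrystal isomorphic to one of $\SVWordsss{n}{m}$, and the operator $\rect$ commutes with such isomorphisms, so I may assume $\cB = \SVWordsss{n}{m}$ where the raising operators admit the signature-rule description from Proposition~\ref{sv-e-def}. Writing $R_k := E_1 E_2 \cdots E_k$, we have $\rect = R_{n-1} \circ R_{n-2} \circ \cdots \circ R_1$, and I would prove by induction on $k$ that $(R_k \circ R_{k-1} \circ \cdots \circ R_1)(b)$ satisfies $e_i = 0$ for every $i \in [k]$; setting $k = n-1$ yields the theorem.

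The base case $k = 1$ is immediate. For the inductive step, the factors of $R_k = E_1 E_2 \cdots E_k$ are applied in the order $E_k, E_{k-1}, \ldots, E_1$, with each $E_j$ establishing $e_j = 0$. Whenever $|i - j| > 1$, the operators $e_i$ and $e_j$ modify disjoint subsets of entries (namely those in $\{i, i+1\}$ versus $\{j, j+1\}$) and their signature rules depend on disjoint data, so they commute; consequently any previously established zero-condition on $e_i$ survives the subsequent application of $E_j$. The entire argument therefore reduces to the following Key Lemma.

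\emph{Key Lemma.} If $b \in \SVWordsss{n}{m}$ satisfies $e_i(b) = 0$ and $|i - j| = 1$, then $e_i(E_j(b)) = 0$.

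The main obstacle is proving the Key Lemma. My approach would be to relabel so that $j = 1$ and restrict attention to positions containing letters in $\{1, 2, 3\}$, reducing to a three-letter calculation. Lemma~\ref{E1-lem} then specifies precisely which entries are modified by $E_1$ (the ends of the combined and left forms of the $1$-word have a $2$ removed, and the starts of the left forms receive an added $1$), while Lemma~\ref{highest-lem} controls where paired parentheses in a highest weight two-letter word must sit. Combining these, a careful parenthesis-matching analysis should verify that the modifications cannot introduce a new combined or left form in the $2$-word under the hypothesis $e_2(b) = 0$. The symmetric case $j = i + 1$ is handled by an analogous argument. This bookkeeping is where the bulk of the technical work will lie.
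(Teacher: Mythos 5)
Your reduction to the model $\SVWordsss{n}{m}$ and the commutation of $e_i$ with $E_j$ for $|i-j|>1$ are fine, but the Key Lemma on which the whole induction rests is false, in both directions. For $j=i+1$: take $b=(\{3\})\in\SVWordsss{3}{1}$; then $e_1(b)=0$, while $E_2(b)=(\{2\})$ and $e_1((\{2\}))=(\{1,2\})\neq 0$. For the direction your inner induction actually needs (the zero $e_j=0$ just established by $E_j$ must survive the subsequent application of $E_{j-1}$): take $b=(\{3\},\{2\})\in\SVWordsss{3}{2}$; then $e_2(b)=0$, but by Lemma~\ref{E1-lem} we get $E_1(b)=(\{3\},\{1\})$, whose $2$-word is a single left form, so $e_2(E_1(b))=(\{2,3\},\{1\})\neq 0$. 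The mechanism is exactly the one your parenthesis bookkeeping would have to rule out but cannot: $E_1$ removes a $2$, which deletes a right parenthesis from the $2$-word and unpairs an earlier left parenthesis. A sanity check that should have flagged the problem: if the Key Lemma held, then a single pass $E_1E_2\cdots E_{n-1}$ would already produce a highest weight element, making the nested prefixes in the definition of $\rect$ superfluous; those prefixes exist precisely because this naive preservation fails.

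What is true is a braid-type statement that keeps the hypothesis your Key Lemma throws away. In your scheme, just before $E_{j-1}$ is applied inside $R_k$, the element is $E_jE_{j+1}\cdots E_k(c_0)$ with $e_{j-1}$ vanishing on $c:=E_{j+1}\cdots E_k(c_0)$ (the operators $E_{j+1},\dots,E_k$ do not disturb the $(j-1)$-word); since $e_{j-1}(c)=0$ is equivalent to $c=E_{j-1}(c)$, the statement you actually need is $e_j(E_{j-1}E_jE_{j-1}(d))=0$ for all $d$, which is precisely the paper's Lemma~\ref{eE-lem}. That lemma is the real technical core: the paper proves it not by signature-rule bookkeeping in the word model but by induction over the tensor decomposition (checking $\One_n$ and $\SS_n$, then passing to $\SS_n\otimes\cB$) using the estimates of Lemmas~\ref{delta-lem}, \ref{any-lem}, \ref{nor-lem} and Corollary~\ref{C: diff in epsilon}; from it one deduces $E_iE_{i-1}E_iE_{i-1}=E_{i-1}E_iE_{i-1}$, and then a short commutation computation gives $E_i\circ\rect_j=\rect_j$ for all $i<j$, which is the theorem. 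So your outline can be salvaged, but only by replacing the false Key Lemma with Lemma~\ref{eE-lem} (or an equivalent braid relation) and proving that instead.
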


An analogous result holds for normal $\gl_n$-crystals by \cite[Thm.~13.5]{BumpSchilling}.

\begin{exa}
If $S \in \SS_n$ then $E_i(S)$ is formed by changing all copies of $i+1$ in $S$ to $i$. Thus $E_i(S) = S$ if $S\cap\{i,i+1\}=\varnothing$, and otherwise $E_i(S) = (S\setminus\{i,i+1\})\sqcup \{i\}$.  It follows that 
\[
E_1E_2\cdots E_{i-1}(S)=\begin{cases}
S& \text{if $S$ is disjoint from $[i]$} \\
(S\setminus [i])\sqcup\{1\} & \text{otherwise}.\end{cases}
\]
We conclude that 
$\rect(S) = \{1\}$, so Theorem~\ref{rect-thm} holds if $\cB=\SS_n$.
\end{exa}

For the rest of this section fix (not necessarily normal) $\sqgln$-crystals $\cB$ and $\cC$.

\begin{lem}\label{delta-lem}
 For $b \in \cB$ and $c\in\cC$
 let
 $\Delta(b,c) := \max\{0, \varepsilon_i(b) - \varphi_i(c)\}
$. Then
\[
\varepsilon_i(b\otimes c) = \varepsilon_i(c) + \Delta(b,c)\quand
E_i(b \otimes c) = e_i^{\Delta(b,c)}(b) \otimes E_i(c)
\]
\end{lem}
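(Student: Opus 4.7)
The plan is to prove the lemma by a short case analysis on whether $\varepsilon_i(b) \leq \varphi_i(c)$ (in which case $\Delta(b,c) = 0$) or $\varepsilon_i(b) > \varphi_i(c)$ (in which case $\Delta(b,c) = \varepsilon_i(b) - \varphi_i(c) > 0$), tracking how the statistics $\varepsilon_i$ and $\varphi_i$ evolve as we iterate the tensor product rule \eqref{otimes-eq}.

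The key preliminary observation I would record is the following: for any element $x$ in a $\sqgln$-crystal with $e_i(x) \neq 0$, applying $e_i$ decreases $\varepsilon_i(x)$ by $1$ and increases $\varphi_i(x)$ by $1$. This follows directly from Definition~\ref{sqgln-def}: by part (b), the weight change under $e_i$ is either $\e_i$ or $-\e_{i+1}$, so in either case $\weight(x)_i - \weight(x)_{i+1}$ increases by $1$; part (a) then forces $\varphi_i - \varepsilon_i$ to increase by $2$, and since $\varepsilon_i$ drops by $1$ by construction, $\varphi_i$ rises by $1$.

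With this in hand, Case 1 ($\varepsilon_i(b) \leq \varphi_i(c)$) is handled by observing that the tensor rule applies $e_i$ to the right factor, which increases $\varphi_i$ of that factor, so the inequality $\varepsilon_i(b) \leq \varphi_i(\text{right})$ is preserved for all subsequent applications. Iterating exactly $\varepsilon_i(c)$ times drains the right factor's $i$-string, giving $E_i(b\otimes c) = b\otimes E_i(c)$ and $\varepsilon_i(b\otimes c) = \varepsilon_i(c)$, which matches the claim with $\Delta(b,c)=0$. For Case 2 ($\varepsilon_i(b) > \varphi_i(c)$), the tensor rule initially acts on the left factor, and each such application decreases the left's $\varepsilon_i$ by $1$ while leaving the right's $\varphi_i$ unchanged. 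After exactly $\Delta(b,c)$ applications we reach $e_i^{\Delta(b,c)}(b) \otimes c$ with $\varepsilon_i$ of the new left factor equal to $\varphi_i(c)$, which puts us in Case 1 with $b$ replaced by $e_i^{\Delta(b,c)}(b)$. Applying that case yields $E_i(b\otimes c) = e_i^{\Delta(b,c)}(b) \otimes E_i(c)$ and a total of $\Delta(b,c) + \varepsilon_i(c)$ applications of $e_i$, proving the formula for $\varepsilon_i(b\otimes c)$.

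I do not expect any genuine obstacle: the argument is essentially careful bookkeeping against the tensor product rule and the $\sqgln$-crystal axioms. The one place requiring attention is the boundary transition between Cases 1 and 2, which is handled correctly because the rule for $e_i$ in \eqref{otimes-eq} uses the non-strict inequality $\varepsilon_i(b) \leq \varphi_i(c)$ (so equality sends $e_i$ to the right factor), matching the parallel treatment of $f_i$ and ensuring no off-by-one error in the count $\Delta(b,c)$.
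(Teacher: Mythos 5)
Your proposal is correct, and it is essentially the paper's argument: the paper's proof consists of the single remark that the lemma ``follows directly from the definitions of the crystal operators for tensor products,'' and your case analysis (routing $e_i$ left exactly $\Delta(b,c)$ times until $\varepsilon_i$ of the left factor equals $\varphi_i(c)$, then draining the right factor's $i$-string, using that $e_i$ raises $\varphi_i$ by one along a string) is precisely that direct verification written out.
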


\begin{proof}
This follows directly from the definitions of the crystal operators for tensor products. 
\end{proof}

\begin{lem}\label{any-lem}
If $b \in \cB$ and $c = E_i(b)$ then 
\[\weight(c)_{i+1} = \weight(b)_{i+1} - \lceil \varepsilon_i(b)/2\rceil \quand \weight(c)_i - \weight(c)_{i+1} = \tfrac{\varphi_i(b) + \varepsilon_i(b)}{2}.\]
\end{lem}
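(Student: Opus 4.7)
The plan is to prove both identities by tracking the weight changes one $e_i$-application at a time along the $i$-string from $b$ to $c$, then invoking Definition~\ref{sqgln-def}(a) for the final step.

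Set $k := \varepsilon_i(b)$, so that $c = e_i^k(b)$, and let $b^{(j)} := e_i^j(b)$ for $0 \leq j \leq k$. A basic consequence of Definition~\ref{sqgln-def} is that applying $e_i$ decrements $\varepsilon_i$ by $1$ and increments $\varphi_i$ by $1$, since these parameters count how many times the relevant operator may still be applied. Hence $\varepsilon_i(b^{(j)}) = k - j$ for $0\leq j \leq k$; in particular, $\varepsilon_i(c)=0$ and $\varphi_i(c) = \varphi_i(b) + k$. By Definition~\ref{sqgln-def}(b), the weight change at step $j \to j+1$ is $+\e_i$ when $\varepsilon_i(b^{(j)}) = k-j$ is even and $-\e_{i+1}$ when $k-j$ is odd.

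The first step is to sum these contributions over $j=0,1,\dots,k-1$, so that the exponents $k-j$ run over $\{1,2,\dots,k\}$. Among these integers there are $\lceil k/2\rceil$ odd values and $\lfloor k/2\rfloor$ even values, so
\[
\weight(c)_{i+1} = \weight(b)_{i+1} - \lceil k/2\rceil \quand \weight(c)_i = \weight(b)_i + \lfloor k/2\rfloor.
\]
The first equality gives precisely the asserted formula for $\weight(c)_{i+1}$.

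For the second identity, subtract the two displayed equalities and use $\lfloor k/2\rfloor + \lceil k/2\rceil = k$ to get
\[
\weight(c)_i - \weight(c)_{i+1} = (\weight(b)_i - \weight(b)_{i+1}) + k.
\]
Then Definition~\ref{sqgln-def}(a) applied to $b$ replaces $\weight(b)_i - \weight(b)_{i+1}$ by $\tfrac{\varphi_i(b)-\varepsilon_i(b)}{2}$, and since $k = \varepsilon_i(b)$ the right-hand side collapses to $\tfrac{\varphi_i(b)+\varepsilon_i(b)}{2}$, as required. (As a consistency check, this equals $\tfrac{\varphi_i(c)-\varepsilon_i(c)}{2}$ via Definition~\ref{sqgln-def}(a) applied to $c$, since $\varepsilon_i(c)=0$ and $\varphi_i(c)=\varphi_i(b)+\varepsilon_i(b)$.) There is no real obstacle here; the only subtle point is correctly accounting for the parities of $\varepsilon_i$ encountered along the string, which is why the ceiling and floor appear asymmetrically.
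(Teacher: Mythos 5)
Your proposal is correct and amounts to the same argument the paper sketches: the paper's two-line proof (observe $\varepsilon_i(c)=0$ and $\varphi_i(c)=\varphi_i(b)+\varepsilon_i(b)$, then apply Definition~\ref{sqgln-def}) implicitly requires exactly your parity count of the weight changes along the $i$-string for the first identity, and your consistency check via Definition~\ref{sqgln-def}(a) at $c$ is precisely the paper's route to the second. No gaps; the floor/ceiling bookkeeping is handled correctly.
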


\begin{proof}
Observe that $\varphi_i(c) = \varphi_i(b) + \varepsilon_i(b)$ and $\varepsilon_i(c) = 0$. Then use the axioms in Definition~\ref{sqgln-def}.
\end{proof}

The next lemma only holds for normal $\sqgln$-crystals.

\begin{lem}\label{nor-lem}
Suppose $\cB$ is normal. Choose
 $i \in [n-2]$ and $b,c \in \cB$ with $e_{i+1}(b)=c$. 
 \begin{enumerate}
     \item[(a)] If $\varepsilon_{i+1}(b)$ is odd then
     $\weight(c)_i - \weight(c)_{i+1} = \weight(b)_i - \weight(b)_{i+1}$ and $\varepsilon_i(c) = \varepsilon_i(b)$.
     
     \item[(b)] If $\varepsilon_{i+1}(b)$ is even then
     $\weight(c)_i - \weight(c)_{i+1} = \weight(b)_i - \weight(b)_{i+1} - 1$
     and
     $\varepsilon_i(c) - \varepsilon_i(b)\in \{0,1,2\}$.
 \end{enumerate}
\end{lem}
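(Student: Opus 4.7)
The plan is to dispatch the two weight identities first and then use normality to analyze the $\varepsilon_i$ claims via the set-valued word model. For the weights, Definition~\ref{sqgln-def}(b) applied to $e_{i+1}(b)=c$ gives $\weight(c)-\weight(b)=\e_{i+1}$ when $\varepsilon_{i+1}(b)$ is even and $\weight(c)-\weight(b)=-\e_{i+2}$ when $\varepsilon_{i+1}(b)$ is odd; projecting onto the $i$th and $(i{+}1)$st coordinates immediately yields the claimed differences of $\weight(c)_i-\weight(c)_{i+1}$.

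For the $\varepsilon_i$ statements, I will invoke normality: the connected component of $b$ inside $\cB$ embeds as a full subcrystal of $\SS_n^{\otimes m}\cong\SVWordsss{n}{m}$ for some $m$, and since $\varepsilon_i$ is intrinsic to the crystal graph I may assume $b,c\in\SVWordsss{n}{m}$ and reason via Proposition~\ref{sv-e-def}. Writing $b=(S_1,\dots,S_m)$, the operator $e_{i+1}$ alters exactly one entry $S_k$: in case (a) it deletes $i+2$ from an $S_k\supseteq\{i+1,i+2\}$, and in case (b) it inserts $i+1$ into an $S_k$ containing $i+2$ but not $i+1$.

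Since the $i$-word depends only on whether each $S_j$ contains $i$, $i+1$, both, or neither, the effect on the $i$-word is easy to describe. In case (a) the intersection $S_k\cap\{i,i+1\}$ is unchanged, so the $i$-word of $c$ equals that of $b$ character for character and hence $\varepsilon_i(c)=\varepsilon_i(b)$. In case (b), $S_k$ gains $i+1$, so its contribution to the $i$-word changes from ``$)$'' to ``$)$-$($'' (when $i\in S_k$) or from nothing to ``$($'' (otherwise). In either sub-case, the $i$-word of $c$ is obtained from that of $b$ by inserting a single new ``$($'' at a specified position, with the additional constraint in the former sub-case that the freshly created ``$)$-$($'' triple merges the new ``$($'' into the equivalence class of the old ``$)$''.

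The main obstacle is the purely combinatorial claim that any such insertion changes $\varepsilon_i=2L+C$ by an amount in $\{0,1,2\}$. I plan to prove this by case analysis tracked via $u=L+C$ and $v=R+C$, the counts of unpaired ``$($''s and ``$)$''s in the word: adding one ``$($'' either leaves the total number of matched pairs constant (in which case $u$ rises by one and $v$ stays fixed) or produces exactly one new matched pair (in which case $u$ stays fixed and $v$ falls by one). Writing $\varepsilon_i=2u-C$, the remaining work is to bound the change in the at-most-one combined form: in the first scenario the combined form can only be created or preserved (so $\Delta C\ge 0$) and in the second it can only be preserved or destroyed (so $\Delta C\le 0$), using the fact that class merging induced by any $)$-$($ triple in sub-case (b1) only adds unpaired symbols to a given class. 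Once this bookkeeping is in place $\Delta\varepsilon_i\in\{0,1,2\}$ follows immediately, and the lemma is complete.
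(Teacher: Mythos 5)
Your reduction to the set-valued word model is legitimate (both $\weight$ and $\varepsilon_i$ are isomorphism invariants and every connected normal $\sqgln$-crystal embeds in $\SVWordsss{n}{m}$), the weight identities follow from Definition~\ref{sqgln-def}(b) exactly as you say, and your treatment of case (a) is clean: when $\varepsilon_{i+1}(b)$ is odd the $(i+1)$-word has a combined form, so $e_{i+1}$ deletes $i+2$ from one entry, no intersection $S_j\cap\{i,i+1\}$ changes, and the $i$-word is literally unchanged. This is a genuinely different route from the paper, which never touches the $i$-word combinatorics and instead proves the two implications by induction on tensor factors, checking $\One_n$ and $\SS_n$ directly and propagating through $\SS_n\otimes\cB$ via Lemma~\ref{delta-lem} and the axioms in Definition~\ref{sqgln-def}.

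The gap is in case (b), precisely at the step you flag as ``remaining work'': the claims that $\Delta C\geq 0$ when no new matched pair is created and $\Delta C\leq 0$ when one is. These are not routine bookkeeping, and the one-line justification offered (that $)$-$($ merging ``only adds unpaired symbols to a given class'') does not address the actual difficulty: inserting a single ``$($'' can rearrange the canonical pairing nontrivially. For example, in the $i$-word $($\,$)$ an insertion between the two characters yields $($\,$($\,$)$, where the \emph{old} ``$($'' loses its partner to the new one; more generally a previously paired ``$($'' can become unpaired, partners can be reassigned, and an equivalence class can effectively split (the old generator ``paired $($ with $)$'' may disappear). So whether the at-most-one combined form is created or destroyed depends on whether the rightmost unpaired ``$)$'' and the leftmost unpaired ``$(\,$'' remain joined after this reshuffling, and that requires a genuine structural argument (tracking which ``$($'' becomes unpaired in the bumping cascade, that no pair can span an unpaired character, that insertions only occur at token boundaries so a $)$-$($ triple is never separated, etc.), or else an argument exploiting the constraint that the insertion position is dictated by the $(i+1)$-word — a constraint you never use. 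I believe the two $\Delta C$ inequalities are in fact true for arbitrary token-boundary insertions, so your plan is likely completable, but as written the hardest part of the lemma is asserted rather than proved; by contrast the paper's tensor-product induction replaces all of this with a short finite check on $\SS_n$ plus the weight axioms.
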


\begin{proof}
Notice that if $\varepsilon_{i+1}(b)$ is odd then applying $e_{i+1}$ to $b$ does not change components $i$ or $i+1$ of the weight by Definition~\ref{sqgln-def}(b), while if $\varepsilon_{i+1}(b)$ is even then 
applying $e_{i+1}$ to $b$ subtracts one from component $i+1$ of the weight while leaving component $i$ unchanged. This implies our claims about how $\weight(c)_i -\weight(c)_{i+1}$ compares to $\weight(b)_i-\weight(b)_{i+1}$, and it just remains to show that 
\begin{equation}\label{jusn-eq}
\varepsilon_{i+1}(b)\text{ odd} \ \ \Rightarrow\ \ \varepsilon_i(c) = \varepsilon_i(b)
\qquand
\varepsilon_{i+1}(b)\text{ even} \ \ \Rightarrow\ \ \varepsilon_i(c) - \varepsilon_i(b) \in \{0,1,2\}.
\end{equation}
When $\cB=\One_n$ this property is obvious.
When $\cB=\SS_n$, 
we have $b\cap \{i\} = c\cap \{i\}$ and  either $b\cap \{i+1\} = c\cap \{i+1\}$ when $\varepsilon_{i+1}(b) \in \{0,1\}$ or $b\cap \{i+1\} \subsetneq c\cap \{i+1\}$ when $\varepsilon_{i+1}(b)=2$,
so the desired implications are again evident.

To complete the proof, it suffices by induction to 
assume that \eqref{jusn-eq} holds for $\cB$ and then check that it also holds for $\SS_n \otimes \cB$.
Following this strategy, we fix $i \in [n-2]$, $S \subseteq [n]$,  and $b \in \cB$ with $e_{i+1}(S\otimes b) \neq 0$.
If $e_{i+1}(S\otimes b) = S\otimes e_{i+1}(b)$ then
\[ 
\varepsilon_i(e_{i+1}(S\otimes b)) = \varepsilon_i(e_{i+1}(b)) +\max\{0,\varepsilon_i(S) - \varphi_i(e_{i+1}(b))\}.
\]
By hypothesis, we may assume that \eqref{jusn-eq} applies to $c=e_{i+1}(b)$.
This means that if $\varepsilon_i(b)$ is odd then $\varepsilon_i(e_{i+1}(b)) = \varepsilon_i(b)$, and also 
$\varphi_i(e_{i+1}(b)) = \varphi_i(b)$
by Definition~\ref{sqgln-def}(a), so 
\[
\varepsilon_i(e_{i+1}(b)) +\max\{0,\varepsilon_i(S) - \varphi_i(e_{i+1}(b))\}=
\varepsilon_i(b) +\max\{0,\varepsilon_i(S) - \varphi_i(b)\}.
\]
Thus $\varepsilon_i(e_{i+1}(S\otimes b))=\varepsilon_i(S\otimes b)$ as desired
by Lemma~\ref{delta-lem}.
If $\varepsilon_i(b)$ is even
then $\varepsilon_i(e_{i+1}(b)) = \varepsilon_i(b)+q$ and 
$\varphi_i(e_{i+1}(b)) = \varphi_i(b) + q-2$
for some $q \in \{0,1,2\}$ by Definition~\ref{sqgln-def}(a).
Then
\[ 
\varepsilon_i(e_{i+1}(S\otimes b)) = \varepsilon_i(b) + q
+
\max\{0,\varepsilon_i(S) - \varphi_i(b) +2-q\} \in \varepsilon_i(S\otimes b) + \{0,1,2\}
\]
as desired.

Now suppose $e_{i+1}(S\otimes b) =e_{i+1}(S) \otimes b$.
Then $T := e_{i+1}(S) \neq 0$
is obtained from $S$ by removing $i+2$ or adding $i+1$.
In the first case, 
one has $S\cap \{i+1,i+2\} = \{i+1, i+2\}$
and $T = S \setminus \{i+2\}$, and therefore
$T\cap \{i,i+1\} = S\cap \{i,i+1\}$
and $\varepsilon_i(T) = \varepsilon_i(S)$.
Suppose we are in the second case.
Then one has $S\cap \{i+1,i+2\} = \{i+2\}$
and $T = S \sqcup \{i+1\}$.
We consider whether $i \in S$:
\begin{itemize}
\item If $i \in S$,
then $T\cap \{i,i+1\} = \{i,i+1\}$.
We have $\varepsilon_i(S) = 0$ and $\varepsilon_i(T) = 1$.
\item Otherwise, $T\cap \{i,i+1\} = \{i+1\}$.
We have $\varepsilon_i(S) = 0$ and $\varepsilon_i(T) = 2$.
\end{itemize}
Then it follows from 
Lemma~\ref{delta-lem}
that 
\[\varepsilon_i(e_{i+1}(S\otimes b)) = \varepsilon_i(b) +\max\{\varepsilon_i(T) - \varphi_i(b)\} \in \varepsilon_i(S\otimes b) + \{0,1,2\},\] which is what we needed to show.
\end{proof}

Applying the previous lemma successively gives the following.

\begin{cor}
\label{C: diff in epsilon}
Suppose $\cB$ is normal with $i\in[n-2]$ and $b \in \cB$. 
If $\varepsilon_{i+1}(b)$ is odd 
then $\varepsilon_i(E_{i+1}(b)) - \varepsilon_i(b) \leq \varepsilon_{i+1}(b) -1$,
and if
$\varepsilon_{i+1}(b)$ is even 
then $\varepsilon_i(E_{i+1}(b)) - \varepsilon_i(b) \leq \varepsilon_{i+1}(b)$.
\end{cor}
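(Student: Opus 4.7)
The plan is to prove both inequalities by iterating Lemma~\ref{nor-lem} along the $(i+1)$-string of $b$. Let $k := \varepsilon_{i+1}(b)$, set $b_0 := b$, and for $0 \leq j < k$ define $b_{j+1} := e_{i+1}(b_j)$, so that $b_k = E_{i+1}(b)$. By the definition of $\varepsilon_{i+1}$ and Definition~\ref{sqgln-def}, each $b_{j+1} \neq 0$ and $\varepsilon_{i+1}(b_j) = k-j$ for $0 \leq j \leq k$.

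I would then telescope and write
\[
\varepsilon_i(E_{i+1}(b)) - \varepsilon_i(b) \;=\; \sum_{j=0}^{k-1} \bigl(\varepsilon_i(b_{j+1}) - \varepsilon_i(b_j)\bigr).
\]
By Lemma~\ref{nor-lem} applied at the $j$-th step (using that $\cB$ is normal), each summand equals $0$ when $\varepsilon_{i+1}(b_j) = k-j$ is odd, and lies in $\{0,1,2\}$ when $k-j$ is even. Hence the total is at most twice the number of indices $j \in \{0,1,\dots,k-1\}$ for which $k-j$ is even.

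Counting parities: if $k$ is odd, the values $k-j$ for $j = 0, \dots, k-1$ run through $k, k-1, \dots, 1$, and exactly $(k-1)/2$ of these are even, so the sum is bounded by $2\cdot (k-1)/2 = k-1$, giving the first inequality. If $k$ is even, then exactly $k/2$ of these values are even, so the sum is bounded by $2\cdot k/2 = k$, giving the second inequality. There is no serious obstacle: the entire argument is a mechanical bookkeeping on top of Lemma~\ref{nor-lem}, whose proof has already handled the nontrivial structural content (namely that normality is what prevents $\varepsilon_i$ from jumping by more than $2$ per application of $e_{i+1}$).
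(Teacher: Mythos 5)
Your proof is correct and is essentially the paper's argument: the paper proves this corollary simply by "applying the previous lemma successively," which is exactly your telescoping sum along the $(i+1)$-string together with the parity count of how many of the values $\varepsilon_{i+1}(b_j)=k-j$ are even. The bookkeeping is right in both parity cases ($(k-1)/2$ even values when $k$ is odd, $k/2$ when $k$ is even), so nothing is missing.
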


To deduce Theorem~\ref{rect-thm} we require one other technical lemma.

\begin{lemma}\label{eE-lem}
Suppose $\cB$ is normal. Let $i \in [n-2]$ and $b \in \cB$. Then 
$ e_{i+1}( E_i E_{i+1} E_i(b)) =0.$
\end{lemma}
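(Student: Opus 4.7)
Write $a = E_i(b)$, $c = E_{i+1}(a)$, and $d = E_i(c)$. By Definition~\ref{sqgln-def}(a), the conclusion $e_{i+1}(d)=0$ is equivalent to $\varepsilon_{i+1}(d)=0$; since we already know $\varepsilon_{i+1}(c)=0$ by definition of $E_{i+1}$, the content of the lemma is that this vanishing is preserved by the final rectification $E_i$. I would reduce via normality to $\cB\subseteq \SVWordsss{n}{m}$ from Section~\ref{svw-sect}, so that $b = U_1\otimes U_2 \otimes \cdots\otimes U_m$, and argue by induction on $m$.

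For the base case $m\leq 1$, where $\cB\subseteq \One_n\sqcup\SS_n$, a direct case analysis on $b\cap\{i,i+1,i+2\}$ shows that $d$ restricted to these three indices always lies in $\{\emptyset, \{i\}, \{i+2\}, \{i,i+2\}\}$, and none of these admits an $e_{i+1}$-arrow. For the inductive step, decompose $b = U\otimes b'$ with $U\subseteq[n]$ and $b'\in\SVWordsss{n}{m-1}$. Applying Lemma~\ref{delta-lem} three times in succession yields
\[
d = V\otimes d',\quad\text{where}\quad V = e_i^{\Delta_3}\, e_{i+1}^{\Delta_2}\, e_i^{\Delta_1}(U) \quand d' = E_i E_{i+1} E_i(b'),
\]
for non-negative integers $\Delta_1,\Delta_2,\Delta_3$ determined at each step by the then-current $\varepsilon$- and $\varphi$-values of $U$ and of the successively rectified $b'$. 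The inductive hypothesis gives $\varepsilon_{i+1}(d')=0$, so Lemma~\ref{delta-lem} yields $\varepsilon_{i+1}(d) = \max\{0,\varepsilon_{i+1}(V) - \varphi_{i+1}(d')\}$, and the problem reduces to verifying the inequality $\varepsilon_{i+1}(V)\leq \varphi_{i+1}(d')$.

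The main obstacle is establishing this inequality. Computing $\varepsilon_{i+1}(V)$ amounts to tracking how each of the $\Delta_1+\Delta_2+\Delta_3$ raising steps alters the $(i+1)$-charge of $U$, which I would handle by iterating Lemma~\ref{nor-lem}; computing $\varphi_{i+1}(d')$ uses Lemma~\ref{any-lem} to pin down $\varphi_{i+1}$ after each single rectification, together with Corollary~\ref{C: diff in epsilon} to bound the growth of $\varepsilon_i(b')$ under $E_{i+1}$ and hence the sizes of the $\Delta_j$. I expect it will be cleanest to strengthen the induction hypothesis to a conjoint statement that controls both $\varepsilon_{i+1}$ and $\varphi_{i+1}$ of the triply rectified element, since both quantities feed into the required inequality and must be propagated together through the inductive step.
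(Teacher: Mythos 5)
Your setup is the same as the paper's: reduce by normality to a tensor product, write
$E_iE_{i+1}E_i(U\otimes b') \;=\; e_i^{\Delta_3}e_{i+1}^{\Delta_2}e_i^{\Delta_1}(U)\otimes E_iE_{i+1}E_i(b')$
via Lemma~\ref{delta-lem}, invoke the inductive hypothesis $\varepsilon_{i+1}(d')=0$ for $d'=E_iE_{i+1}E_i(b')$, and reduce the lemma to the single inequality $\varepsilon_{i+1}(V)\leq \varphi_{i+1}(d')$ with $V=e_i^{\Delta_3}e_{i+1}^{\Delta_2}e_i^{\Delta_1}(U)$. But that inequality \emph{is} the entire content of the inductive step, and your proposal stops exactly there, flagging it as ``the main obstacle'' and offering only a list of tools plus the suggestion of a strengthened induction hypothesis. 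The argument that actually closes it requires no strengthening: first, $\varepsilon_{i+1}(V)\leq 2$, and it is nonzero only if $i+2$ still belongs to $e_{i+1}^{\Delta_2}e_i^{\Delta_1}(U)$ (the final $e_i^{\Delta_3}$ cannot create an $i+2$); second, since $\varepsilon_{i+1}(d')=0$, Definition~\ref{sqgln-def}(a) gives $\varphi_{i+1}(d')=2\bigl(\weight(d')_{i+1}-\weight(d')_{i+2}\bigr)$, an \emph{even} number, so the inequality can only fail when this weight difference is $0$; third, a computation with Lemma~\ref{any-lem} combined with Corollary~\ref{C: diff in epsilon} (applied to $c=E_i(b')$) shows $\weight(d')_{i+1}-\weight(d')_{i+2}\geq \lceil \varphi_{i+1}(c)/2\rceil$, so vanishing forces $\varphi_{i+1}(c)=0$, in which case $\Delta_2=\varepsilon_{i+1}\bigl(e_i^{\Delta_1}(U)\bigr)$ is the full string length and $e_{i+1}^{\Delta_2}$ removes every $i+2$ from the single box, making $\varepsilon_{i+1}(V)=0$ too. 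This parity-plus-weight-bound argument is the heart of the proof; without it your reduction does not close, so the proposal has a genuine gap at its central step. (Note also that Lemma~\ref{nor-lem} is not what computes $\varepsilon_{i+1}(V)$ — $V$ comes from a single box, where everything is explicit; the role of Lemma~\ref{nor-lem}, through Corollary~\ref{C: diff in epsilon}, is to bound $\varepsilon_i(E_{i+1}(c))$ for the crystal factor, which feeds into the weight estimate above.)

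There is also a slip in your base case. For $\cB=\SS_n$ you assert that $d\cap\{i,i+1,i+2\}$ lies in $\{\emptyset,\{i\},\{i+2\},\{i,i+2\}\}$ and that none of these admits an $e_{i+1}$-arrow; but a subset meeting $\{i+1,i+2\}$ in exactly $\{i+2\}$ \emph{does} admit $e_{i+1}$ (it adds $i+1$), so the justification as stated is false. What saves the base case is that those two sets cannot occur: $E_i$ converts every $i+1$ to $i$, $E_{i+1}$ then converts every $i+2$ to $i+1$, and the final $E_i$ converts that $i+1$ to $i$, so in fact $d\cap\{i,i+1,i+2\}\in\{\emptyset,\{i\}\}$ and $e_{i+1}(d)=0$. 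This is easily repaired, but together with the unproven key inequality it means the proposal, as written, does not constitute a proof.
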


\begin{proof}
   This is easy to check  when $\cB =\One_n$ or $\SS_n$.
   Assume the desired property holds for $\cB$. Then it suffices to show that the property still holds for $\SS_n\otimes \cB$.

Fix $i \in [n-2]$, $S \subseteq [n]$,  and $b \in \cB$. Then 
\[ E_i E_{i+1} E_i(S\otimes b)
= e_i^p e_{i+1}^q e_i^r(S) \otimes E_i E_{i+1} E_i (b)\]
for some non-negative integers $p$, $q$, and $r$. By the inductive hypothesis,
$\varepsilon_{i+1}(E_i E_{i+1} E_i (b)) = 0$.
Together with Lemma~\ref{delta-lem}, we have 
\[
\varepsilon_{i+1} (E_i E_{i+1} E_i(S\otimes b)) = \max\{0, \varepsilon_{i+1}(e_i^p e_{i+1}^q e_i^r(S)) - \varphi_{i+1}(E_i E_{i+1} E_i(b)).\}
\]
Thus, we just need to show that 
\begin{align}
\label{EQ: Goal in eEEE}
\varepsilon_{i+1}(e_i^p e_{i+1}^q e_i^r(S)) \leq \varphi_{i+1}(E_i E_{i+1} E_i(b)).
\end{align}
The left hand side is at most two and nonzero if and only if $i+2 \in  e_i^pe_{i+1}^q e_i^r(S)$, or equivalently if $i+2 \in  e_{i+1}^q e_i^r(S)$. Meanwhile, the right hand side of the previous  equation is the even integer
\[
2(\weight(E_i E_{i+1} E_i(b))_{i+1} - \weight(E_i E_{i+1} E_i(b))_{i+2}),\]
as by hypothesis $\varepsilon_{i+1}(E_i E_{i+1} E_i(b))=0$.
In view of these facts,
we reduce~\eqref{EQ: Goal in eEEE} to:
\[
\weight(E_i E_{i+1} E_i(b))_{i+1} = \weight(E_i E_{i+1} E_i(b))_{i+2}
\quad\Rightarrow \quad i+2 \notin e_{i+1}^q e_i^r(S).\] Let $T = e_i^r(S)$ and $c = E_i(b)$. Then $q = \max\{0,\varepsilon_{i+1}(T) - \varphi_{i+1}(c)\}$
and we want to show that
\[
\weight(E_i E_{i+1}(c))_{i+1} - \weight(E_i E_{i+1}(c))_{i+2}=0
\quad \Rightarrow\quad i+2 \notin e_{i+1}^q(T).\]

To this end, we first use Lemma~\ref{any-lem} to obtain the identity
\[\begin{aligned}
\weight(E_i E_{i+1}(c))_{i+1} - \weight(E_i E_{i+1}(c))_{i+2} &=
\weight( E_i E_{i+1}(c))_{i+1} - \weight(E_{i+1}(c))_{i+2}
\\&=
\weight( E_{i+1}(c))_{i+1} - \weight(E_{i+1}(c))_{i+2} - \lceil \varepsilon_i( E_{i+1}(c))/2\rceil 
\\&=
\frac{\varphi_{i+1}(c) + \varepsilon_{i+1}(c)}{2} - \lceil \varepsilon_i( E_{i+1}(c))/2\rceil  
.\end{aligned}
\]
Set $\delta = 1$ when $\varepsilon_{i+1}(c)$ is odd
and $\delta = 0$ otherwise. Then
Corollary~\ref{C: diff in epsilon} implies
\[
\frac{\varphi_{i+1}(c) + \varepsilon_{i+1}(c)}{2} - \lceil \varepsilon_i( E_{i+1}(c))/2\rceil 
\geq
\frac{\varphi_{i+1}(c) + \varepsilon_{i+1}(c)}{2} - \left\lceil \frac{ \varepsilon_{i+1}(c)-\delta+ \varepsilon_i(c)}{2}\right\rceil.
\]
As $\varepsilon_i(c) = 0$,
and since 
$\varphi_{i+1}(c)$ and $\varepsilon_{i+1}(c)$ have the same parity, the last expression is equal to 
\[
\frac{\varphi_{i+1}(c) + \varepsilon_{i+1}(c)}{2} -  \frac{ \varepsilon_{i+1}(c)-\delta}{2} 
=
\frac{\varphi_{i+1}(c) +\delta}{2}
=
\left\lceil \frac{ \varphi_{i+1}(c)}{2}\right\rceil .
\]
Putting everything together, we conclude that
$$\weight(E_i E_{i+1}(c))_{i+1} - \weight(E_i E_{i+1}(c))_{i+2} \geq \left\lceil \frac{ \varphi_{i+1}(c)}{2}\right\rceil. $$ Thus
it is only possible to have 
$\weight(E_i E_{i+1}(c))_{i+1} - \weight(E_i E_{i+1}(c))_{i+2}=0$
when $\varphi_{i+1}(c) = 0$, and
in that case one has $q=e_{i+1}(T)$ so it is evident that  $i+2 \notin e_{i+1}^q(T) = E_{i+1}(T)$ as needed.
\end{proof}

We now give the proof of the main theorem of this section:

\begin{proof}[Proof of Theorem~\ref{rect-thm}]
Lemma~\ref{eE-lem} shows that if $i \in[n-1]$ then $E_{i}  E_{i-1} E_{i} E_{i-1} = E_{i-1} E_{i} E_{i-1}$ in any normal $\sqgln$-crystal.
If $|i-j|>1$ then $E_i$ and $E_j$ clearly commute.
Thus, for $1 < i < j$:
\begin{equation}
\label{EQ: E and rect}
\begin{split}
E_i \circ (E_1 E_2 \cdots E_{j-1}) \circ E_{i-1}
& = (E_1 \cdots E_{i-2}) \circ (E_i E_{i-1} E_i E_{i-1}) \circ (E_{i+1}\cdots E_{j-1})\\
& = (E_1 \cdots E_{i-2}) \circ (E_{i-1} E_i E_{i-1}) \circ (E_{i+1}\cdots E_{j-1})\\
& = (E_1 \cdots E_{i-2}) \circ (E_{i-1} E_i) \circ (E_{i+1}\cdots E_{j-1}) \circ E_{i-1}\\
&=(E_1 E_2 \cdots E_{j-1}) \circ E_{i-1}.   
\end{split}
\end{equation}
Next, let
$
\rect_j := (E_1E_2 \cdots E_{j-1}) \cdots (E_1 E_2 E_3)\ (E_1 E_2) (E_1).
$
We prove that $E_i \circ\rect_j = \rect_j$ for all $1 \leq i < j \leq n$ by induction
on $j$.
The base case when $j=2$ is obvious. The inductive step is
\[
\begin{aligned}
E_i \circ \rect_j &= 
E_i  \circ (E_1E_2\cdots E_{j-1}) \circ \rect_{j-1} 
\\&=
E_i  \circ (E_1E_2\cdots E_{j-1}) \circ E_{i-1}\circ \rect_{j-1}
\\&=
(E_1E_2\cdots E_{j-1}) \circ E_{i-1} \circ \rect_{j-1}
\\&=
(E_1E_2\cdots E_{j-1}) \circ \rect_{j-1} =\rect_j,
\end{aligned}
\]
which holds since
the second and last
equalities follow from our
inductive hypothesis
and the third equality follows
from~\eqref{EQ: E and rect}.
Thus $E_i \circ\rect_j = \rect_j$ for all $1\leq i < j \leq n$.
The theorem follows by taking $j=n$.
\end{proof}

\section{Grothendieck positivity}\label{gp-sect}

This section contains our proof of Theorem~\ref{ch-thm}, which shows that the characters of all finite normal $\sqgln$-crystals have characters that are sums of symmetric Grothendieck polynomials. 
Our proof relies on a
nontrivial result about the \definition{Hecke insertion algorithm} from \cite{BKSTY}, which is proved in
Section~\ref{hecke-sect}.
Section~\ref{Tab-sect} combines this with the lemmas in Section~\ref{prelim-sect} to derive our main results.

\subsection{Hecke insertion}\label{hecke-sect}

\definition{Hecke insertion} is an algorithm introduced in~\cite{BKSTY},
which gives a bijection between certain
pairs of words and  certain
pairs of tableaux.
We briefly review its definition in this section, and then present two technical lemmas about the algorithm.

Throughout, we use the following  procedure to insert a number into an increasing tableau:

\begin{defn}[\cite{BKSTY}]
\label{h-def}
Suppose $T$ is an increasing tableau and $x\in\PP$. To \definition{insert} $x$ into some column of $T$, we first locate the smallest entry $y$ in the column with $x<y$. Then:
\begin{enumerate}
\item[(a)] When no such entry $y$ exists, we append $x$ to the end of the column if this forms an increasing tableau, and otherwise we leave $T$ unchanged.
In the first case, we say that the insertion ends at the newly added box containing $x$.
In the second case, if
the last box of the column is in row $i$, and the last box in row $i$ of $T$ is in column $j$,
then
the insertion ends at $(i,j)$.

\item[(b)] When such an entry $y$ does exist,  we replace $y$ by $x$ if this forms an increasing tableau, and otherwise we leave $T$ unchanged.
In either case, we say that the insertion of $x$ \definition{bumps} the number $y$ from the column.
\end{enumerate}
We write $\insss{x}{T}$
to denote the result of first inserting $x$ into the first column of $T$,
and then in each successive column inserting the number bumped during the previous step.
We say that the insertion of $x$ into $T$ ends at the box where the final step of this process ends.
\end{defn}

\begin{exa}
Various examples of Definition~\ref{h-def} appear in \cite[\S3.1]{BKSTY}.
For instance, if 
\[T = \ytab{1 & 2 & 3 \\ 3 & 4 & 5}
\quad\text{then}
\quad
\insss{1}{T} = 
\ytab{
1 & 2 & 3 & 5\\
3 & 4 & 5},
\quad
\insss{2}{T} = 
\ytab{
1 & 2 & 3 & 5\\
2 & 3 & 4},
\quand\insss{3}{T} = 
\ytab{
1 & 2 & 3\\
3 & 4 & 5}.\]
Inserting $1$ or $2$ into $T$ ends at position $(1,4)$, while inserting $3$ ends at box $(2,3)$.
\end{exa}

We use the term \definition{word} to mean a finite sequence of positive integers $I=I_1I_2\cdots I_r$.
A pair of words $(A, I)$ 
is a \definition{compatible sequence}
if $I$ is weakly increasing with the same length as $A$, and it holds that
$A_j > A_{j+1}$ whenever
 $I_j = I_{j+1}$.

The formal definition of \definition{Hecke insertion} from \cite{BKSTY} is now given as follows:

\begin{defn}[\cite{BKSTY}]
Suppose $(A,I)$ is a compatible sequence with $A$ and $I$  of length $r$.
Let $P_0, P_1, P_2,\dots,P_r$ be the sequence of increasing tableaux with $P_0=\emptyset$ and $P_k = \insss{A_k}{P_{k-1}}$.
Then define $Q_k$ to be the set-valued tableau of the same shape as $P_k$ that is formed from $Q_{k-1}$ by adding $I_k$ to the box where the insertion of $A_k$ into $P_{k-1}$ ends. 
Finally, we let
\[ \TT(A,I) = P_r
\quand 
\TQ(A,I) = Q_r.\]
We
define the pair $(P_r,Q_r)$
to be the image of $(A,I)$ under Hecke insertion.
\end{defn}

The \definition{weight} of a word $I=I_1I_2\cdots I_r$
is the integer tuple $\weight(I)$ that has
$x^{\weight(I)} = x_{I_1}x_{I_2}\cdots x_{I_r}$.
By construction, $\TT(A,I)$ is an increasing tableau,
and the set of numbers
appearing in $A$ (respectively, $I$) is the same as the set of numbers appearing in $\TT(A,I)$ (respectively, $\TQ(A,I)$).
More precisely, the set-valued tableau $\TQ(A,I)$ has weight
$\wt(\TQ(A,I))=\wt(I)$.

It turns out that $\TQ(A,I)$ is always semistandard. In fact, results in \cite{BKSTY} show that 
Hecke insertion is a bijection from compatible sequences 
$(A,I)$ to pairs $(P, Q)$,
where $P$ is an increasing tableau
and $Q$ is a semistandard set-valued tableau with the same shape as $P$.

\begin{exa}\label{AI-ex}
If $A = 424311$ and $I=112223$ then 
\[
\ytableausetup{boxsize = .6cm,aligntableaux=center}
\TT(A,I) = \begin{ytableau}
1 & 2 & 4 \\
3
\end{ytableau}
\quand
\TQ(A,I) = \begin{ytableau}
1 & 12 & 23 \\
2
\end{ytableau}.
\]
\end{exa}

When $T$ is a translation of an increasing tableau, we define $\insss{x}{T}$ by first moving $T$ so that its upper left box is $(1,1)$, then Hecke inserting $x$ into $T$, and then translating the result back so that the upper left box is back to where it started in $T$.

If $T$ and $U$ are tableaux with disjoint sets of boxes, then we define $T \sqcup U$ in the natural way.
The following lemma about Hecke insertion will be useful later.
We omit its proof, which
follows as a basic exercise from the definitions.

\begin{lemma}\label{ins-lem}
Let $T$ be an increasing tableau with exactly two rows.
Let $x<z$ be the entries in the first column of $T$ and let $U$ be the tableau formed from $T$ by omitting the first column. Then:
\begin{enumerate}
    \item[(a)] If $y$ is an integer with $y<x$ then $\insss{y}{T}$ is formed by moving the first row of $T$  to the right by one column and then adding $y$ to the vacant box in position $(1,1)$.
    \item[(b)] If $y$ is an integer with $x<y<z$ then $\insss{y}{T}=\ytab{ x\\ y}\sqcup \insss{z}{U}.$

    \item[(c)] If $T$ is not a rectangle and every box $(2,j) \in T$ has $T_{2,j}\geq T_{1,j+1}$,  then $\insss{x}{T}=T$. 
    
    \item[(d)] If there is a minimal $j$ with $T_{2,j}<T_{1,j+1}$ or with $(2,j) \in T$ and $(1,j+1)\notin T$, then $\insss{x}{T}$ is formed from  $T$ by moving all boxes $(1,k)$ with $j+1\leq k$ to the right by one column and then adding $T_{2,j}$ to the vacant box in position $(1,j+1)$.
\end{enumerate}
\end{lemma}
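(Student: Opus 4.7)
The plan is to trace the Hecke insertion $\insss{y}{T}$ column by column and verify in each of the four cases that the bumping cascade matches the claimed outcome. In every case the argument reduces to careful bookkeeping of which replacements are permitted, governed by the strict-increase requirements of an increasing tableau.

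For (a), inserting $y$ into column $1$ bumps $x=T_{1,1}$ (since $y<x<z$ and the replacement preserves both row and column strictness). The bumped value $x$ is then inserted into column $2$, and inductively at each successive column $k\le q$ (where $q$ is the length of row $1$) one replaces $T_{1,k}$ by $T_{1,k-1}$ while bumping $T_{1,k}$. At column $q+1$ the column is empty, so $T_{1,q}$ is appended at $(1,q+1)$, producing exactly the right-shifted row $1$ described in the statement. Case (b) is even shorter: with $x<y<z$, inserting $y$ into column $1$ bumps $z$ and leaves the new first column as $x,y$; the bumped $z$ then propagates into $U$ exactly via $\insss{z}{U}$, and since subsequent columns are unaffected this equals $\insss{y}{T}$.

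For (c) and (d) the inserted value is $y=x=T_{1,1}$, and the first step always bumps $z=T_{2,1}$ without altering column $1$, because replacing $z$ by $x$ would duplicate $x$ in that column. The key structural observation is that whenever $T_{2,k-1}\ge T_{1,k}$, inserting $T_{2,k-1}$ into column $k$ bumps $T_{2,k}$ without altering the tableau: the only candidate for replacement is $T_{2,k}$, and carrying out the replacement would either violate strict increase in row $2$ or duplicate an entry in column $k$. In case (c) this pattern propagates through column $p$ (with $p$ the length of row $2$); then at column $p+1$ the only entry $T_{1,p+1}$ fails to exceed $T_{2,p}$, appending $T_{2,p}$ at $(2,p+1)$ fails by row strictness, and the insertion terminates with $\insss{x}{T}=T$. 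In case (d) the same propagation proceeds through column $j$; at column $j+1$, either (i) the bumped $T_{2,j}$ is strictly less than $T_{1,j+1}$, in which case $T_{1,j+1}$ is replaced by $T_{2,j}$ and a fresh cascade along row $1$ (analogous to (a)) eventually appends $T_{1,q}$ at $(1,q+1)$, or (ii) $T$ is a rectangle so column $j+1$ is empty and $T_{2,j}$ is appended directly at $(1,j+1)$. In either sub-case the net effect is precisely the row-$1$ right-shift with $T_{2,j}$ placed at $(1,j+1)$, as claimed.

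The main technical obstacle is distinguishing when a replacement actually modifies the tableau from when it merely bumps without modification, since Hecke insertion is sensitive to strict-versus-weak inequalities in a way that ordinary RSK insertion is not. Once this bookkeeping is in place, each verification reduces to a single strict inequality among the row- and column-strictness constraints of $T$, so the overall argument remains routine throughout.
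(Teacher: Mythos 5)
Your proposal is correct: the paper omits the proof of this lemma as ``a basic exercise from the definitions,'' and your column-by-column trace of the bumping cascade is exactly that intended verification. The only points you gloss over --- e.g., that in (b) the row constraints against the new first column $(x,y)$ are automatic since the value entering column $2$ is $z>y>x$, and that in (d) the replacement of $T_{1,j+1}$ by $T_{2,j}$ is valid because $T_{1,j}<T_{2,j}<T_{2,j+1}$ --- are routine checks of the same kind and do not affect the argument.
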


If $B$ is any collection of distinct positive integers (that is, a single number, a set, a strictly decreasing sequence, a one-row increasing tableau, and so forth) then we write $\insss{B}{T}$ to denote the result of successively Hecke inserting the numbers in $B$ into $T$ in decreasing order.
The following more difficult lemma 
will be needed in the next section.

\def\Plast{P^{\mathsf{last}}}
\def\Ptop{P^{\mathsf{rest}}}

\def\Qlast{Q^{\mathsf{last}}}
\def\Qtop{Q^{\mathsf{rest}}}

\def\Clast{X}

\def\Rtop{R^{\mathsf{top}}}
\def\Rbot{R^{\mathsf{bot}}}

\def\Stop{S^{\mathsf{top}}}
\def\Sbot{S^{\mathsf{bot}}}

\def\iequiv{\overset{i}\equiv}

\begin{lemma}\label{hecke-lem}
Let $P$ be a non-empty increasing tableau.
Write $\Plast$ for the last row of $P$
and let $\Ptop$ be the other rows. 
Suppose $B=\{b_1 >b_2> \cdots > b_t\}$ is a set of positive integers. Then $\insss{B}{P}$ is obtained by the following procedure. 
First 
compute $T = \insss{B}{\Plast}$, which is 
an increasing tableau
of at most two rows. 
Then insert the numbers in the first row of $T$
into $\Ptop$ in decreasing order to obtain an increasing tableau
$U$.
Finally, append the second row of $T$
under $U$.
\end{lemma}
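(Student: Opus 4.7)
The plan is to proceed by induction on $|B|$. The base case $|B|=0$ reduces to showing that $\insss{\Plast}{\Ptop} = P$, i.e., re-inserting the last row of $P$ (in decreasing order) back into the upper rows returns the original tableau. I would verify this by a secondary induction on the width of $\Plast$, using Lemma~\ref{ins-lem}(a) to track how the largest entry of $\Plast$ first appends below column~$1$ as a new bottom row, and how each subsequent smaller entry cascades rightward through this new row to settle into its original column, reconstructing $P$ entry by entry.

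For the inductive step $|B|=t\geq 1$, I would split off the first (largest) insertion: set $B' := \{b_2 > \cdots > b_t\}$ and $P' := \insss{b_1}{P}$, so that $\insss{B}{P} = \insss{B'}{P'}$. Applying the induction hypothesis to $B'$ and $P'$ factors the right side through the last row of $P'$. To reconcile this with the claim (which factors through $\Plast$, the last row of $P$), the key intermediate step is a \emph{single-insertion sub-claim}: for any increasing tableau $Q$ with last row $R$ and upper rows $Q'$, and any positive integer $b$, the tableau $\insss{b}{Q}$ is obtained by computing $T_b := \insss{b}{R}$ (which has at most two rows), inserting the first row of $T_b$ into $Q'$ in decreasing order, and then appending the second row of $T_b$ below the result.

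I would prove the single-insertion sub-claim by induction on the number of rows of $Q$. In the inductive step, one views the bottom two rows of $Q$ as a two-row tableau and invokes Lemma~\ref{ins-lem} to determine how insertion of $b$ transforms them, then applies the inductive hypothesis to propagate the resulting bumped value through the remaining upper rows. Granting this sub-claim, the induction on $|B|$ closes after checking the compatibility condition that the second row of $T_{b_1}$ produced when inserting $b_1$ merges correctly with the subsequent insertions of $B'$ into the last row, yielding precisely $T = \insss{B}{\Plast}$.

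The main obstacle I anticipate is handling the ``no-change'' branches of Hecke insertion, in particular Lemma~\ref{ins-lem}(c), where an insertion attempts to bump an entry but leaves the tableau unchanged while still recording a specific end box. These degenerate cases must be tracked carefully on both sides of the claimed identity, and I expect the bulk of the verification will be devoted to matching end boxes and resolving parity between the direct and factored insertion procedures case by case.
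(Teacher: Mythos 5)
Your outer induction on $|B|$ does not close, and the step you defer as a ``compatibility condition'' is exactly where the content of the lemma lies. Write $L$ for the last row of $P$, $x$ for its first (bottom-left) entry, and recall that $B$ is inserted largest-first. If $b_1>x$, then $\insss{b_1}{P}$ is just $P$ with a new one-box row containing $b_1$ appended below column $1$: its last row is the single box $b_1$ and its remaining rows are all of $P$. Feeding $(B',\insss{b_1}{P})$ into your induction hypothesis then says: compute $\insss{B'}{(b_1)}$, which is the one-row tableau listing all of $B$ in increasing order, and insert its entries in decreasing order into $P$. That is literally the identity $\insss{B}{P}=\insss{B}{P}$; the inductive step is circular and yields nothing, even though in this branch the assertion of the lemma is not trivial (here $\insss{B}{L}$ genuinely acquires a second row). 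If instead $b_1<x$, the hypothesis applied to $(B',\insss{b_1}{P})$ is phrased in terms of the last row and the upper rows of $\insss{b_1}{P}$; but a single column insertion can bump entries strictly above the last row (e.g.\ $\insss{2}{\ytab{1&4\\3&5}}=\ytab{1&3&4\\2&5}$), so these rows are not $L$ and the upper part of $P$. Matching the resulting factorization with the claimed one would require an identity of the form ``inserting the first row of $\insss{b_1}{L}$ and then further values into the upper part of $P$ gives the same tableau as inserting the first row of $\insss{B}{L}$,'' a composition statement about iterated Hecke insertions that is supplied neither by the single-insertion sub-claim nor by the induction hypothesis, and which is of essentially the same depth as the lemma itself. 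Your closing remark that one only needs to check that the second row of $T_{b_1}$ ``merges correctly'' with the later insertions into the last row misplaces the difficulty: the obstruction sits in the upper rows, not in the bottom-row bookkeeping or in the no-change branches of Lemma~\ref{ins-lem}.

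Two smaller points. Your base case, that re-inserting $L$ in decreasing order into the upper rows recovers $P$, is true, but Lemma~\ref{ins-lem}(a) is not the relevant tool: it concerns two-row tableaux and the insertion of a value smaller than the top-left entry, whereas here every inserted value exceeds all entries of the corresponding column of the upper part, and the correct mechanism is a cascade of replacements along the newly created bottom row; this should be argued directly. For contrast, the paper sidesteps the composition problem entirely by inducting on the tableau rather than on $|B|$: it peels off the first column of $P$, splits $B$ according to whether its elements are smaller than or at least $x$, and treats the cases $\min(B)>x$ and $\min(B)=x$ separately with the two-row Lemma~\ref{ins-lem}, so that every statement in the argument concerns a single multi-insertion $\insss{B}{\cdot}$ and no identity comparing composite insertion procedures is ever needed.
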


\begin{exa}
If $P = \ytab{
1 & 2 & 3 & 5 & 6 \\
2 & 3 & 6 & 9 \\
3 & 4 & 7}$ 
and 
 $B = \{6,5,3,1\}$ then it holds that 
\[\insss{B}{P} = 
\ytab{
1 & 2 & 3 & 5 & 6 & 9 \\
2 & 3 & 6 & 7 & 9 \\
3 & 4 & 7 \\
5 & 6 
}
.\] 
In the notation of the lemma, we have $
\Plast = \ytab{3 & 4 & 7}
$
and
$
\Ptop = \ytab{
1 & 2 & 3 & 5 & 6 \\
2 & 3 & 6 & 9}
$,
along with
\[
T=\insss{B}{\Plast}=\ytab{1 & 3 & 4 & 6 & 7 \\ 5 & 6}
\quand
U= \insss{\{7,6,4,3,1\}}{\Ptop} = \ytab{
1 & 2 & 3 & 5 & 6 & 9 \\
2 & 3 & 6 & 7 & 9 \\
3 & 4 & 7
}.
\]
\end{exa}

\begin{proof}
We first decompose the $P$ into smaller pieces as in the following pictures:
\newcommand{\cnb}{\ynobottom}
\newcommand{\cnbt}{\ynobottom\ynotop}
\newcommand{\cnt}{\ynotop}
\[
P \ = \ \begin{young}[16pt][c] 
]=\cnb &  \cnb & \cnb &  \cnb & \cnb & \cnb & \cnb & =] \\ 
]=\cnbt & \Ptop\cnbt & \cnbt & \cnbt & \cnbt & \cnbt & =]\cnt  \\ 
]=\cnt & \cnt &  \cnt & \cnt & \cnt & =] \cnt  \\ 
]=  &   \Plast &    &    \\ 
\end{young}
\  = \ \begin{young}[16pt][c] 
]=\cnb & ]= \cnb & \cnb &  \cnb & \cnb & \cnb & \cnb & =] \\ 
]=C\cnbt & ]= \cnbt & Q\cnbt & \cnbt & \cnbt & \cnbt & =]\cnt  \\ 
]=\cnbt & ]= \cnbt &  \cnbt & \cnbt & \cnt & =] \cnt  \\ 
]=\cnt & ]= \cnt & \cnt  & \cnt  \\ 
\end{young}
\  = \ 
\begin{young}[16pt][c] 
]=\cnb & ]= \cnb & \cnb &  \cnb & \cnb & \cnb & \cnb & =] \\ 
]=\cnbt & ]= \cnbt & \Qtop\cnbt & \cnbt & \cnbt & \cnbt & =]\cnt  \\ 
]=\cnt & ]= \cnt &  \cnt & \cnt & \cnt & =] \cnt  \\ 
\Clast & ]=  &  \Qlast &   \\ 
\end{young}.
\]
More precisely, we consider the following subtableaux of $P$:
\begin{itemize}
    \item Let $C$ be the first column of $P$.
    
    \item Let $\Clast$ be the single box in the last row of $C$, and write $x\in\PP$ for its unique entry.

    \item Let $Q$ be the part of $P$ excluding $C$ so that $P = C \sqcup Q$.

    \item Let $\Qlast$ be the last row of $Q$ and let $\Qtop$ be the other rows, so that $\Plast = \Clast \sqcup \Qlast$.

\end{itemize}
While $P$, $\Ptop$, and $C$ are increasing tableaux,
the other objects just defined are translations of increasing tableaux. 

We  adopt the convention that if $R$ is a nonempty tableau occupying at most two consecutive rows, then $\Rtop$ denotes the (nonempty) first row of $R$ and $\Rbot$ denotes the (possibly empty) second row.
Let $B = \{b_1>b_2>\dots>b_t\}$ and define $R=\insss{B}{\Plast}$. This means that we can draw
\[\insss{B}{\Plast} \ =\ 
\begin{young}[16pt][c] 
]=  &  &  \Rtop &    &  & & =]   \\
]=  &  &  \Rbot & & =]
\end{young}
\] 
with $\Rbot = \varnothing$ if $\insss{B}{\Plast}$ has only one row.
The lemma is equivalent to the claim that
\begin{equation}\label{ins-claim}
\insss{B}{P} =  \insss{\Rtop}{\Ptop} \sqcup \Rbot
\end{equation}

We first reduce this identity to the case when $\min(B)\geq x$.
To this end, define
\[B^{<x} = \{ b \in B : b < x\}
\quand 
B^{\geq x} = \{ b \in B : b \geq x\}.
\]
Observe that 
$\Rbot = \insss{B^{\geq x}}{\Plast}^{\mathsf{bot}}$,
while
$\Rtop = B^{<x} \sqcup \insss{B^{\geq x}}{\Plast}^{\mathsf{top}}$ is obtained by first listing $B^{< x}$ in order then appending the first row of $\insss{B^{\geq x}}{\Plast}$. 
Also notice that  
\[
\insss{B}{P} =
\insss{B^{<x}}{\insss{B^{\geq x}}{P}}
\]
can computed from $\insss{B^{\geq x}}{P}$ by deleting the row directly after $\Plast$, then inserting $B^{<x}$ as usual, and then adding back the deleted row.
Combining these observations shows that if we knew \eqref{ins-claim} in the case when $B=B^{\geq x}$, then the same identity  would hold for all subsets $B$.

Thus, we may assume  $\min(B) \geq x$.
Suppose this inequality is strict and let $y := \min(B) >x$.
Let $Y$ be the one-box tableau containing $y$
directly below $\Clast$, as in the following picture:
\[\begin{young}[16pt][c] 
]=\cnb & \cnb & \cnb &  \cnb & \cnb & \cnb & \cnb & =] \\ 
]=\cnbt & \Ptop\cnbt & \cnbt & \cnbt & \cnbt & \cnbt & =]\cnt  \\
]=\cnt & \cnt &  \cnt & \cnt & \cnt & =] \cnt  \\ 
]= \Clast & ]=  &  \Qlast &   \\ 
]=] Y  
\end{young}
\  = \ \begin{young}[16pt][c] 
]=\cnb & ]= \cnb & \cnb &  \cnb & \cnb & \cnb & \cnb & =] \\ 
]=C\cnbt & ]= \cnbt & Q\cnbt & \cnbt & \cnbt & \cnbt & =]\cnt  \\ 
]=\cnbt & ]= \cnbt &  \cnbt & \cnbt & \cnt & =] \cnt  \\ 
]=\cnt & ]= \cnt & \cnt  & \cnt  \\ 
]=] Y
\end{young}.\]
Note that $Y$ is not contained in $P = \Ptop  \sqcup \Clast \sqcup \Qlast   =  C \sqcup  Q$.
Now, we have
\begin{equation}
\label{ins-eq0}
    \insss{B}{P} = C \sqcup \insss{B\setminus\{y\}}{Q} \sqcup Y.
\end{equation}
Let $S = \insss{B\setminus\{y\}}{\Qlast}$.
By induction on tableau size, we may assume that
\begin{equation}\label{ins-eq3}
\insss{B\setminus\{y\}}{Q} = \insss{\Stop}{\Qtop}\sqcup \Sbot.
\end{equation}
In this case it is clear that
$\Rtop =   \Clast \sqcup \Stop$
and
$
 \Rbot = \Sbot \sqcup Y$ along with
\[
\insss{\Rtop}{\Ptop}=\insss{(\Clast \sqcup \Stop)}{\Ptop} = C \sqcup \insss{\Stop}{\Qtop}.
\]
These identities are valid even when $\Qlast$ is empty, 
interpreting $\Sbot$ to be empty and $\Stop$ to be $B\setminus\{y\}$ arranged in increasing order.
By combining \eqref{ins-eq0} and \eqref{ins-eq3} we get the needed identity
\[
\insss{\Rtop}{\Ptop} \sqcup \Rbot
=
C \sqcup \insss{\Stop}{\Qtop} \sqcup \Sbot \sqcup Y=\insss{B}{P}.
\]

The lemma is evident if $B = \{x\}$ so, as our final case, assume that $x =\min(B)<\max(B)$.
Redefine $y := \min(B\setminus\{x\})$.
We claim that  $\insss{B}{P}$ and $\insss{B\setminus\{x\}}{P}$ have the same last row.
This is because the last two entries in the first column of $\insss{B\setminus\{x\}}{P}$ are $x$ followed by $y$.  
Thus, when we insert $x$ into the first column of $\insss{B\setminus\{x\}}{P}$ to form $\insss{B}{P}$, the first column is unchanged and its last entry $y$ is inserted into the second column. 
Then, this number either bumps an entry before the last row (as will all subsequent insertions), or the column is unchanged and its last entry is inserted into the third column, and so on. Later insertions of numbers less than $x$ will only bump entries before the last row.
It follows similarly that  
$\Rbot=\insss{B}{\Plast}^{\mathsf{bot}}$
is equal to 
$\insss{B\setminus\{x\}}{\Plast}^{\mathsf{bot}}$.

The previous case shows that the last row of $\insss{B\setminus\{x\}}{P}$ is equal to $\insss{B\setminus\{x\}}{\Plast}^{\mathsf{bot}}$.
Therefore, if the index of this row is $i$,
then we just need to show that 
\[
 \insss{B}{P}
\iequiv 
\insss{\Rtop}{\Ptop}
\]
where $\iequiv$ means equality outside row $i$.
A crucial observation needed to prove this is that
\begin{equation}
\label{2ins-eq0}
    \insss{B}{P} \iequiv  C \sqcup \insss{B\setminus\{x\}}{Q}.
\end{equation}
This identity follows from the observations in the previous paragraph. In more detail, from the previous case, we know that
$
\insss{B\setminus\{x\}}{P} =  C \sqcup \insss{B\setminus\{x,y\}}{Q} \sqcup Y.
$
When we insert $x$ to form $\insss{B}{P}$,
the first column $C$ does not change and $y$ is inserted into the second column. Subsequent insertions leave row $i$ unchanged,
but the sequence of inserted numbers is the same as the one that results from inserting $y$ into $\insss{B\setminus\{x,y\}}{Q}$,
and so \eqref{2ins-eq0} follows.

Once again, if $S = \insss{B\setminus\{x\}}{\Qlast}$,
then we may assume by induction 
that
\begin{equation}\label{2ins-eq3}
\insss{B\setminus\{x\}}{Q} = \insss{\Stop}{\Qtop}\sqcup \Sbot.
\end{equation}
As we have
$\Rtop =   \Clast \sqcup \Stop$
 it follows that
\[
\insss{\Rtop}{\Ptop}=\insss{(\Clast \sqcup \Stop)}{\Ptop} = C \sqcup \insss{\Stop}{\Qtop},
\]
and so combining \eqref{2ins-eq0} and \eqref{2ins-eq3} gives
$
\insss{\Rtop}{\Ptop}
=
C \sqcup \insss{\Stop}{\Qtop} \iequiv \insss{B}{P}
$.
\end{proof}

\subsection{Characters of square root crystals}\label{Tab-sect}

This section contains the proof of Theorems~\ref{ch-thm} and \ref{dhf-thm}. 
These will be derived
as consequences of
Theorem~\ref{P: Insertion = rect} and \ref{main-thm}, which
relate the highest weight elements of normal $\sqgln$-crystals to Hecke insertion.

Our starting point is the following slightly unintuitive method of 
converting a set-valued word to a compatible sequence.
 
\begin{defn}
Given $S \in \SVWordsss{n}{m}$
we construct words $A(S)$ and $I(S)$
in the following way.
For each $i \in [n]$, if $j_1<j_2<\dots<j_k$ are the indices $j \in [m]$ with $i \in S_j$ then we define
\[ 
A_i = (m+1-j_1)(m+1-j_2)\cdots(m+1-j_k)\quand I_i = ii\cdots i \text { ($k$ repetitions)}.\]
We then concatenate these sequences to form
 $A(S)=A_1A_2\cdots A_n$
and $I(S)=I_1I_2\cdots I_n$.
By construction $(A(S), I(S))$ is a compatible sequence
with $\wt(I(S)) = \weight(S)$.
\end{defn}

\begin{exa}\label{ASIS-ex}
If 
$S = (\{1,2\},\{2\},\{1\},\{2,3\})$
then $A(S) = 424311$ and $I(S) = 112223$.
\end{exa}

The following statement is a straightforward exercise:

\begin{prop}\label{compat-prop}
The map $S \mapsto (A(S),I(S))$ is a bijection between
$\SVWordsss{n}{m}$ and the set of compatible sequences
$(A, I)$ such that the entries in $A$ and $I$
are all in $[m]$ and $[n]$, respectively.
\end{prop}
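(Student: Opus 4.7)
The plan is to exhibit an explicit two-sided inverse $\Psi$ to the map $S \mapsto (A(S), I(S))$ by reversing the construction block by block. At heart, both $\SVWordsss{n}{m}$ and the target set of compatible sequences are simply parametrizing the same Boolean data, namely the subset of $[n]\times[m]$ consisting of pairs $(i,j)$ with $i \in S_j$; the proposition records that the bijection between these two encodings works out.

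First I would check that $(A(S), I(S))$ actually lands in the target. By definition $I(S) = I_1 I_2 \cdots I_n$ is the weakly increasing word with $I_i = i^{|A_i|}$, so its entries lie in $[n]$. Each entry of $A(S)$ has the form $m+1-j$ with $j \in [m]$, so the entries lie in $[m]$. Within the block where $I(S)$ equals $i$, the corresponding entries of $A(S)$ are $A_i = (m+1-j_1)(m+1-j_2)\cdots(m+1-j_k)$, which is strictly decreasing because $j_1 < j_2 < \cdots < j_k$. This is exactly the compatibility condition.

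Next I would define $\Psi$. Given a compatible pair $(A,I)$ with entries in $[m]$ and $[n]$, the fact that $I$ is weakly increasing means that for each $i \in [n]$ the positions where $I = i$ form a contiguous block, and compatibility forces the corresponding subword $A^{(i)}$ of $A$ to be strictly decreasing (possibly empty). Define $\Psi(A,I) := (S_1, \dots, S_m) \in \SVWordsss{n}{m}$ by
\[
S_j := \{\, i \in [n] : m+1-j \text{ appears in } A^{(i)} \,\}.
\]

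Finally I would verify that $\Psi$ and $S \mapsto (A(S), I(S))$ are mutual inverses. For $\Psi \circ (S \mapsto (A(S), I(S)))$: by construction $A(S)^{(i)} = A_i$ lists $\{m+1-j : i \in S_j\}$ in decreasing order, so $i \in \Psi(A(S), I(S))_j$ iff $m+1-j \in A_i$ iff $i \in S_j$. For the other composition: starting from $(A,I)$, applying $\Psi$ and then reading off $A(\Psi(A,I))_i$ produces the strictly decreasing rearrangement of $A^{(i)}$, which equals $A^{(i)}$ itself since it is already strictly decreasing; the block lengths of $I(\Psi(A,I))$ agree with $|A^{(i)}|$ and hence recover $I$. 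There is no genuine obstacle here --- the only thing to be careful about is consistently applying the order reversal $j \leftrightarrow m+1-j$ and remembering that within each $i$-block the entries of $A$ are listed in strictly decreasing order (as dictated by compatibility), matching the choice made in the definition of $A(S)$.
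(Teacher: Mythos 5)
Your proof is correct: the explicit inverse $\Psi$ and the verification of both compositions (using that each $A^{(i)}$ is strictly decreasing, so it is determined by its underlying set) is exactly the straightforward argument the paper has in mind, since the paper states this proposition without proof, calling it ``a straightforward exercise.'' Nothing is missing, and the only delicate point --- the order reversal $j \leftrightarrow m+1-j$ and the decreasing convention within each $i$-block --- is handled correctly.
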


We let $(\TT(S), \TQ(S))$ 
be the image of $(A(S), I(S))$ 
under the Hecke insertion; formally:
\begin{equation}
\TT(S) := \TT(A(S),I(S))
\quand
\TQ(S) := \TQ(A(S),I(S))
\end{equation}
for each $S \in \SVWordsss{n}{m}$.
If $S = (\{1,2\},\{2\},\{1\},\{2,3\})$ 
then we see from Examples~\ref{AI-ex} and \ref{ASIS-ex} that 
\[
\ytableausetup{boxsize = .6cm,aligntableaux=center}
\TT(S) = \begin{ytableau}
1 & 2 & 4 \\
3
\end{ytableau}
\quand
\TQ(S) = \begin{ytableau}
1 & 12 & 23 \\
2
\end{ytableau}.
\]
Recall from Proposition~\ref{tab-highest} that we associate
a tableau
$\Tab(S)$ to each set-valued word $S$.
The tableaux that arise in this way from
the highest-weight elements of $\SVWordsss{n}{m}$ are closely related to the Hecke insertion tableau $\TT(S)$ just defined.

\begin{thm}
\label{P: Insertion = rect}
If $S \in \SVWordsss{n}{m}$ then $\TT(S) =\Tab(\rect(S))$.
\end{thm}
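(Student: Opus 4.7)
The plan is to prove the theorem by induction on $n$. For the base case $n=1$, the operator $\rect$ is the empty composition so $\rect(S)=S$; since each entry of $S\in\SVWordsss{1}{m}$ is either $\emptyset$ or $\{1\}$, both $\Tab(S)$ and $\TT(S)$ reduce to the single row listing $m+1-j$ in increasing order for the positions $j$ with $1\in S_j$. Indeed, Hecke inserting the strictly decreasing word $A(S)$ into the empty tableau successively bumps the previously inserted entry one column to the right, producing the reverse of $A(S)$.

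For the inductive step, fix $n\geq 2$ and $S\in\SVWordsss{n}{m}$, and let $S^{<n}\in\SVWordsss{n-1}{m}$ be obtained from $S$ by deleting $n$ from each $S_j$. Directly from the definitions, $A(S)=A(S^{<n})\cdot A_n$, where $A_n$ is the strictly decreasing sequence of values $m+1-j$ over positions $j$ with $n\in S_j$, and $I(S)$ extends $I(S^{<n})$ with the constant word $n\cdots n$. Hence $\TT(S)=\insss{A_n}{\TT(S^{<n})}$. Moreover, since each operator $E_i$ with $i\leq n-2$ only alters memberships of indices in $\{i,i+1\}\subseteq[n-1]$, these operators commute with the projection $\cdot^{<n}$. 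Writing $\rect_{n-1}=(E_1\cdots E_{n-2})\cdots(E_1 E_2)(E_1)$ and $T=\rect_{n-1}(S)$, we have $T^{<n}=\rect_{n-1}(S^{<n})$ and $A_n(T)=A_n(S)$, so applying the induction hypothesis to $S^{<n}$ gives
\[
\TT(S)\;=\;\insss{A_n(T)}{\Tab(T^{<n})}.
\]
The theorem thus reduces to showing, for every $T\in\SVWordsss{n}{m}$ that is highest weight for $E_1,\dots,E_{n-2}$, that
\[
\Tab(E_1 E_2 \cdots E_{n-1}(T))\;=\;\insss{A_n(T)}{\Tab(T^{<n})}.
\]

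To establish this reduced identity, I would match the successive applications of $E_{n-1},E_{n-2},\dots,E_1$ to $T$ against the bottom-up row-by-row Hecke insertion procedure furnished by Lemma~\ref{hecke-lem}: first insert $A_n(T)$ into the bottom row of $\Tab(T^{<n})$ to obtain a two-row tableau (which we control cell-by-cell via Lemma~\ref{ins-lem}), then iteratively push the resulting top row into the row above. On the crystal side, $E_{n-1}$ only depends on the $(n-1)$-word of $T$, which the analog of Lemma~\ref{E1-lem} describes via its combined and left forms, pinpointing which $n$'s are removed and which $n-1$'s are added. The structure of the bottom two rows of $\Tab(T^{<n})$ in relation to these forms is exactly what Lemma~\ref{highest-lem} records. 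A direct case analysis (combined form present vs.\ absent; null vs.\ right form at the end of the $(n-1)$-word) should then show that $\Tab(E_{n-1}(T))$ agrees with the output of this initial bottom-row insertion step.

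The main obstacle is propagating this match upward as $E_{n-2},\dots,E_1$ act in turn. After $E_{n-1}$, the element is typically no longer highest weight for $E_{n-2}$ since new copies of $n-1$ may have appeared; however, $E_{n-1}$ commutes with $E_j$ for $j\leq n-3$, so $E_{n-2}$ perturbs only the freshly disturbed $(n-1)$-row relative to the still highest-weight $(n-2)$-row. One then repeats the previous level's argument one row up, with the $(n-1)$'s that $E_{n-1}$ introduced playing the role $A_n(T)$ played in the bottom step, and Lemma~\ref{hecke-lem} again governing how this interacts with rows above. Iterating this bookkeeping $n-1$ times completes the induction. The real technical challenge lies in verifying cleanly at each level that the combined and left forms in the relevant $i$-word correspond precisely to the sequence of bumped entries predicted by Lemma~\ref{hecke-lem}; that is where the bulk of the case analysis lives.
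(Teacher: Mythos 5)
Your reduction is correct and is in fact the same as the paper's: induct on $n$, observe that $E_1,\dots,E_{n-2}$ commute with intersecting every entry with $[n-1]$ and never move the letters $n$, use Theorem~\ref{rect-thm} to know that $\rect_{n-1}(S^{<n})$ is highest weight in $\SVWordsss{n-1}{m}$, and use the fact that $\TT(S)=\insss{A_n}{\TT(S^{<n})}$ by the very definition of Hecke insertion. The identity you reduce to, $\Tab(E_1E_2\cdots E_{n-1}(T))=\insss{A_n(T)}{\Tab(T^{<n})}$ for $T$ whose restriction to $[n-1]$ is highest weight, is precisely the paper's Lemma~\ref{med-lem}, and the ingredients you name (Lemmas~\ref{hecke-lem}, \ref{ins-lem}, \ref{E1-lem}, \ref{highest-lem}) are exactly the ones the paper uses.

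The genuine gap is that you stop where the real content begins, and you say so yourself. The bottom step --- that rows $n-1$ and $n$ of $\Tab(E_{n-1}(T))$ coincide with the result of Hecke inserting $A_n(T)$ into row $n-1$ of $\Tab(T^{<n})$ --- is not a routine check; in the paper it is Lemma~\ref{tech-lem}, proved by an induction on the number of letters $n$ with a three-case analysis keyed to whether the terminal class of the relevant $i$-word is a left, combined, or null/right form, and it is exactly there that Lemmas~\ref{E1-lem}, \ref{highest-lem} and \ref{ins-lem} must be meshed. Writing that ``a direct case analysis should then show'' this leaves the core of the theorem unproven. Your description of the upward propagation is also, as literally stated, incorrect: by Lemma~\ref{hecke-lem} the entries to be inserted into the rows above are the \emph{entire} first row of the two-row tableau produced at the bottom step (equivalently, all positions containing $n-1$ after $E_{n-1}$ acts, the original ones included), not merely ``the $(n-1)$'s that $E_{n-1}$ introduced.'' Finally, ``repeating the previous level's argument one row up'' must be organized as a precise induction, as in Lemma~\ref{med-lem}: one needs to record that the restriction to $[n-2]$ of $E_{n-1}(T)$ is still highest weight (because $E_{n-1}$ does not disturb the $i$-words for $i\le n-3$), apply the inductive statement to the restriction of $E_{n-1}(T)$ to $[n-1]$, and only then splice the untouched bottom row back in via Lemma~\ref{hecke-lem}. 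So the architecture is right and matches the paper, but the two-row matching lemma and the exact inductive formulation of the propagation are missing, and these carry essentially all of the difficulty.
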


To show this result,
we take a recursive approach
using the following technical lemma.

\begin{lemma}\label{tech-lem}
Let $S=(S_1,S_2,\dots,S_m) \in \SVWordsss{n}{m}$ and $i\in[n-1]$.
Suppose that 
\begin{itemize}
\item[] $a_1<a_2< \cdots< a_t$ are the indices $a \in [m]$ with $i \in S_a$, and 
\item[] $b_1 <b_2< \cdots < b_{t'}$ are the indices $b \in [m]$ with $i+1 \in S_b$.
\end{itemize}
Then  rows $i$ and $i+1$ of $\Tab(E_i(S))$ coincide with $\insss{B}{R}$ where 
\[B:=\{m + 1 - b_i : i \in [t']\}
\quand
R:=\begin{array}{|c|c|c|c|} \hline m+1 - a_t & \cdots & m+1 - a_2 & m+1 - a_1 \\ \hline\end{array}.\]
\end{lemma}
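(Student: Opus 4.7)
My strategy is to reduce to a local $\sqrt{\gl_2}$-computation, describe both sides of the identity explicitly using Lemma~\ref{E1-lem}, and then verify the equality by induction on $|B|$ via step-by-step analysis of Hecke insertion with Lemma~\ref{ins-lem}.

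\textbf{Reduction.} The raising operator $E_i$ modifies only those entries of $S$ lying in $\{i,i+1\}$, and the rows $i$ and $i+1$ of $\Tab(E_i(S))$ depend only on the positions of these letters. Hence I first replace $S$ by its projection to the alphabet $\{i,i+1\}$ (relabeled as $\{1,2\}$), reducing to the case $S\in\SVWordsss{2}{m}$ with $i=1$. Under this reduction $R$ becomes the top row of $\Tab(S)$ and $B$ is the set of reverse-indexed positions of $2$ in $S$, and the claim becomes $\insss{B}{R}=\Tab(E_1(S))$.

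\textbf{Explicit form of the target.} Next I apply Lemma~\ref{E1-lem} to compute $E_1(S)$ from the equivalence-class structure of the $1$-word of $S$: the positions of $1$ in $E_1(S)$ are $\{a_1,\dots,a_t\}\cup\{i_1,\dots,i_s\}$ and the positions of $2$ in $E_1(S)$ are $\{b_1,\dots,b_{t'}\}\setminus\{j_0,j_1,\dots,j_s\}$, where $i_k,j_k$ are the starts/ends of the $s$ left forms of the $1$-word and $j_0$ is the end of the combined form (if one exists). Translating via $j\mapsto m+1-j$ immediately yields the predicted rows $1$ and $2$ of $\Tab(E_1(S))$.

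\textbf{Inductive matching with Hecke insertion.} The main task is to show that $\insss{B}{R}$ produces exactly these rows. I proceed by induction on $|B|=t'$, inserting the elements of $B$ in decreasing order; since decreasing order on $\{m+1-b_k\}_{k}$ corresponds to left-to-right order on the indices $b_1<\dots<b_{t'}$, each insertion step can be identified with the ``('' contributed to the $1$-word by $S_{b_k}$. Using Lemma~\ref{ins-lem} one then analyzes three cases: (i) if this ``('' is matched by some ``)'' to its right (lying either in a null form or inside the $u$-body of a left/combined form), then the insertion bumps a row-$1$ entry into row $2$, producing a row-$2$ entry for the position $b_k$, which retains its $2$ in $E_1(S)$; (ii) if it is the unpaired opening of a left form, the insertion appends a new entry to row $1$ with no row-$2$ entry, consistent with adding $1$ to $S_{i_k}$ and removing $2$ from $S_{j_k}$; and (iii) if it is the closing ``-('' of the combined form, the insertion leaves the tableau unchanged by case~(c) of Lemma~\ref{ins-lem}, which exactly matches the removal of $2$ from $S_{j_0}$ without any corresponding addition of $1$.

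The main obstacle will be making this case analysis precise and exhaustive, particularly around the combined form, whose nested $u$-body can trigger intermediate bumps before the terminal ``no-op'' step. The cleanest formulation is to maintain an inductive invariant stating that after the first $k$ insertions the running tableau equals $\Tab(E_1(\widetilde S_k))$ for an appropriate partial set-valued word $\widetilde S_k$ built from the first $k$ processed $2$'s; this reduces the argument to checking the effect of a single transition $\widetilde S_k\to\widetilde S_{k+1}$ by inspecting the local ``('' structure, which is directly handled by Lemmas~\ref{E1-lem} and~\ref{ins-lem}.
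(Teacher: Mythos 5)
Your overall skeleton matches the paper's: reduce to the two-letter case $i=1$, describe $E_1(S)$ via Lemma~\ref{E1-lem}, and induct on the number of occurrences of $i+1$, matching one Hecke insertion at a time against Lemma~\ref{ins-lem}. Since inserting the elements of $B$ in decreasing order processes the letters $i+1$ from left to right, your final transition is exactly the paper's inductive step: insert $m+1-b_{t'}$ into $\Tab(E_1(S'))$ where $S'$ omits the rightmost $i+1$. The problem is that this single-insertion step is where all the work lies, and your three-case analysis is neither exhaustive nor accurate there, so the claim that it is ``directly handled'' by Lemmas~\ref{E1-lem} and~\ref{ins-lem} papers over genuine gaps. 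Concretely: (1) Your case (i) is misdescribed and too coarse. When the new ``('' is paired, the inserted value lands in row $2$, but what is bumped is typically a row-$2$ entry that may cascade through several columns (Lemma~\ref{ins-lem}(b) peels a column and recursively re-inserts the bumped entry), and correspondingly adding one ``('' to the $1$-word can steal the partner of an earlier ``('', which steals the next, and so on; the effect on $E_1$ is non-local (a previously retained $2$ can become removed and a new $1$ added elsewhere). Matching this bump chain with the re-pairing chain is precisely the hard content; the paper avoids analyzing the cascade directly by restructuring the induction in its Case 1 ($b_{t'}<a_t$): it removes both the rightmost $i$ and the rightmost $i+1$, deletes box $(1,1)$ of $R$, and peels the whole first column of $\insss{B}{R}$ at once. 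Your fixed left-to-right invariant has no analogue of this move. (2) Your case analysis omits the situation where the rightmost $i+1$ shares its box with an $i$ and contributes the trailing ``('' of a left form opened earlier (the paper's Case 3, left-form subcase); there the inserted value equals the $(1,1)$ entry, Lemma~\ref{ins-lem}(d) applies, and locating where the row-$1$ shift begins requires the quantitative facts about highest-weight tableaux in Lemma~\ref{highest-lem}(a). Similarly, your case (iii) invokes Lemma~\ref{ins-lem}(c), but its hypothesis (that $T_{2,j}\geq T_{1,j+1}$ throughout and $T$ is not a rectangle) must be verified, which is exactly Lemma~\ref{highest-lem}(b). Nothing playing the role of Lemma~\ref{highest-lem} appears in your proposal, and without such structural input the appeal to Lemma~\ref{ins-lem}(c)/(d) cannot be completed.
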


\begin{proof}
By the symmetry of all crystal operator definitions, it suffices to prove the lemma when $i=1$.
If $t=0$ then $E_1$ just  changes every $2$ appearing in an entry of $S$ to $1$,
and so
\[
\Tab(E_1(S)) = \begin{array}{|c|c|c|c|} \hline m+1 - b_{t'} & \cdots & m+1 - b_2 & m+1 - b_1 \\ \hline\end{array}= \insss{B}{\emptyset}=\insss{B}{R}.
\]
Alternatively, if $t'=0$ then $S=E_1(S)$ and \[\insss{B}{R}=\insss{\emptyset}{R} = R=\Tab(S)=\Tab(E_1(S)).\] From this point on we  assume that $t>0$ and $t'>0$.
Our argument is by induction on $t'$.
Let $\tilde B = B \setminus\{m+1-b_{t'}\}$ and form $\tilde R$ from $R$ by omitting box $(1,1)$.
The rest of the proof is divided into three cases according to the sign of the difference $a_t - b_{t'}$.

Case 1: Suppose 
$b_{t'} < a_t$.
Form $S'$ from $S$ by 
removing $1$ from $S_{a_t}$ and $2$ from $S_{b_{t'}}$.
The $2$ in $S_{b_{t'}}$ corresponds to a left parenthesis
followed by one or more right parentheses. 
Thus, the $1$-word of $S'$ is obtained from that of 
$S$ by removing this left parenthesis
and its paired right parenthesis.
 
In view of Lemma~\ref{E1-lem},\ we see that the set in position $a_t$ of $E_1(S')$ 
still does not contain $1$ while the set in position $b_{t'}$ still does not contain $2$.
Thus, $E_1(S)$ can be obtained from $E_1(S')$ by adding $1$ and $2$
to the corresponding positions.
Now,  it is clear
that 
\[
\Tab(S'') = 
\begin{array}{|c|}
    \hline 
    m+1-a_t \\  \hline
    m+1-b_{t'} \\ 
    \hline
\end{array} \sqcup (\text{the tableau $\Tab(E_1(S'))$ shifted to the right by one column}).
\]
On the other hand, because $m+1-b_{t'} > m+1-a_{t}$, we see from the definition of Hecke insertion that we can express
$\insss{B}{R}$ as the disjoint union
\[ \insss{B}{R}= 
\begin{array}{|c|}
    \hline 
    m+1-a_t \\  \hline
    m+1-b_{t'} \\ 
    \hline
\end{array} \sqcup \insss{\tilde B}{ \tilde R}.\] 
Since we may assume by induction that $\insss{\tilde B}{\tilde R}$ is exactly $\Tab(E_1(S'))$ shifted to right by one column, we conclude that 
$\Tab(E_1(S)) = \Tab(S'')=\insss{B}{R}$.

Case 2:
Suppose 
$j:=b_{t'} > a_t$.
Form $S'$ from $S$ by removing $2$ from $S_j$.
This $2$ corresponds to the last equivalence class in the $1$-word of $S$, which is a single-character  left form. Passing from $S$ to $S'$ has the effect of omitting this equivalence class and otherwise leaving the $1$-word unchanged.
Hence, by Lemma~\ref{E1-lem} we see that the set in position $j$ of 
$E_1(S')$
does not contain $1$ or $2$.
Form $S''$ from $E_1(S')$ by 
adding $1$ to this set.
Then it holds that  $E_1(S) = S''$.

Now, we may assume by induction that $\insss{\tilde B}{R} = \Tab(E_1(S'))$, and so we also have
\[
\label{ind-assume-eq}
\insss{B}{R} = \insss{m+1-j}{\insss{\tilde B}{R}} = \insss{m+1-j}{\Tab(E_1(S'))}.
\]
Because all entries of $S'$ 
containing $1$ or $2$ are in positions $1,2,\dots,j-1$,
the same is true of $E_1(S')$, and so
the increasing tableau
$\Tab(E_1(S'))$ has all entries greater than $m+1-j$.
Thus, Lemma~\ref{ins-lem} tells us that
$\insss{B}{R}$
is formed from 
$\Tab(E_1(S'))$ by shifting the first row to the right and placing $m+1-j$ in box $(1,1)$.
But this tableau is exactly $\Tab(S'')=\Tab(E_1(S))$ as desired.

Case 3: 
Finally suppose   $j := a_t = b_{t'}$.
Then $S_j$ contains both $1$ and $2$ and the numbers contribute a ``$)-($'' to the end of the last equivalence class in the $1$-word of $S$,
which is a combined or left form that starts in some position $i$ with $i\leq j$.
If this equivalence class is a combined form, then 
  $S_i$ will also contain both $1$ and $2$  
 and it is possible for us to have $i=j$.
 If the equivalence class is a left form then we must have $i<j$ and
$S_i$ will contain $2$ but not $1$.

Form $S'$ from $S$ by removing $2$ from $S_j$.
We may assume again by induction that 
$
\insss{B}{R} = \insss{m+1-j}{\Tab(E_1(S'))}.
$
The last entry of $E_1(S')$ containing $1$ is still in position $j$, so $m+1-j$ is in box $(1,1)$ of $\Tab(E_1(S'))$.
Thus, when inserting $m+1-j$ into $\Tab(E_1(S'))$ we are in case (c) or (d) of Lemma~\ref{ins-lem}.

Suppose the $1$-word of $S$ ends with a left form. Then $i<j$ and the $1$-word of $S'$ ends with a null form starting in position $i$. 
By Lemma~\ref{E1-lem},
the $1$-word of $S'$ also ends with a null form starting in position $i$.
In this case the set in position $i$ of $E_1(S')$
contains $2$ but not $1$,
so we can
form $S''$ from $E_1(S')$ by
adding $1$ to this set. 
Then it follows from 
Lemma~\ref{E1-lem} that $E_1(S) = S''$.

Suppose that exactly $r$ of the entries of $E_1(S')$ in positions $i,i+1,i+2,\dots,m$
contain $1$.
Since both $S''$ and $E_1(S')$ are highest weight elements,  
  $\Tab(S'')$ is formed from $\Tab(E_1(S'))$ by moving all boxes $(1,k)$ with $r+1\leq k$ to the right by one column and then adding $m+1-i$ to the vacant box in position $(1,r+1)$.
 Consulting Lemmas~\ref{highest-lem}(a) and \ref{ins-lem}(d), we see that this exactly describes $\insss{m+1-j}{\Tab(E_1(S'))}=\insss{B}{R} $,
 so $\insss{B}{R} = \Tab(S'') = \Tab(E_1(S))$ as needed.
 
When the $1$-word of $S$ ends in a combined form, the $1$-word of $S'$
 ends with a right form, and by Lemma~\ref{E1-lem} we have $S' =E_1(S')= E_1(S)$. To deduce that $\insss{B}{R} = \Tab(E_1(S))$ we just need to check that $\insss{m+1-j}{S'} = S'$, and this holds by Lemmas~\ref{highest-lem}(b) and \ref{ins-lem}(c).
\end{proof}

Combining Lemmas~\ref{hecke-lem} and \ref{tech-lem} leads to this composite statement:

\begin{lemma}\label{med-lem}
Suppose $S \in  \SVWordsss{n}{m}$.
Form $S' \in \SVWordsss{n-1}{m}$ by setting $S'_i = S_i \cap [n-1]$ for all $i \in [m]$.
Assume $S'$ is highest-weight  in $\SVWordsss{n-1}{m}$
and let $P = \Tab(S')$ be its increasing tableau.
If  $b_1 <b_2< \cdots <b_t$ 
are the indices $b \in [m]$ with $n \in S_b$,
then 
\[\Tab(E_1E_2 \cdots E_{n-1}(S))= \insss{B}{P}
\quad\text{for }
B := \{m+1-b_i : i \in [t]\}
.\] 
\end{lemma}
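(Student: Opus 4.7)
The plan is to proceed by induction on $n$, with Lemma~\ref{tech-lem} (which describes how a single operator $E_i$ affects rows $i$ and $i+1$ of $\Tab$) and Lemma~\ref{hecke-lem} (which recursively expresses an iterated Hecke insertion in terms of insertion into the last row) as the two main ingredients. For the base case $n = 2$, the operator $E_1 E_2 \cdots E_{n-1}$ collapses to $E_1$ and $P = \Plast$ is a single row, so Lemma~\ref{tech-lem} applied with $i = 1$ immediately yields $\Tab(E_1(S)) = \insss{B}{P}$.

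For the inductive step, I first apply $E_{n-1}$ and then $E_1 E_2 \cdots E_{n-2}$. Lemma~\ref{tech-lem} with $i = n-1$ tells us that $E_{n-1}$ modifies only rows $n-1$ and $n$ of $\Tab$, replacing them with the one- or two-row increasing tableau $T := \insss{B}{\Plast}$; write $T^{\mathsf{top}}$ and $T^{\mathsf{bot}}$ for its top and (possibly empty) bottom rows. Rows $1, \dots, n-2$ of $\Tab(E_{n-1}(S))$ remain equal to $\Ptop$, so after this first step the full tableau consists of $\Ptop$ stacked with $T^{\mathsf{top}}$ in rows $1, \dots, n-1$ together with $T^{\mathsf{bot}}$ in row $n$.

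Next, let $\tilde S \in \SVWordsss{n-1}{m}$ be obtained from $E_{n-1}(S)$ by deleting every occurrence of $n$. Each $E_i$ with $i \leq n-2$ depends only on the presence of $i$ and $i+1$ in the sets and so commutes with this deletion; in particular, rows $1, \dots, n-1$ of $\Tab(E_1 \cdots E_{n-2} E_{n-1}(S))$ agree with those of $\Tab(E_1 \cdots E_{n-2}(\tilde S))$, while row $n$ stays equal to $T^{\mathsf{bot}}$. To invoke the inductive hypothesis on $\tilde S$, I verify that the restriction $(\tilde S)'$ of $\tilde S$ to $[n-2]$ is highest weight in $\SVWordsss{n-2}{m}$: since $E_{n-1}$ does not modify sets after restriction to $[n-2]$, we have $(\tilde S)' = (S')'$, and this is highest weight because any $e_i$ with $i \leq n-3$ depends only on entries in $[n-2]$ and already annihilates $S'$ by hypothesis.

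By induction, $\Tab(E_1 \cdots E_{n-2}(\tilde S)) = \insss{B'}{P'}$ where $P' = \Tab((\tilde S)') = \Ptop$ and $B'$ is the set of values $m + 1 - c$ over indices $c$ with $n-1 \in \tilde S_c$. But row $n-1$ of $\Tab(\tilde S)$ is precisely $T^{\mathsf{top}}$, and by the definition of the tableau map the set of its entries is exactly this $B'$. Assembling the pieces, $\Tab(E_1 \cdots E_{n-1}(S)) = \insss{T^{\mathsf{top}}}{\Ptop} \sqcup T^{\mathsf{bot}}$, which is exactly the description of $\insss{B}{P}$ supplied by Lemma~\ref{hecke-lem}. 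The main obstacle I foresee is bookkeeping: one must check carefully that restriction to $[n-2]$ preserves being highest weight and that the inductive ``$B'$'' matches the entries of $T^{\mathsf{top}}$, so that Lemmas~\ref{tech-lem} and~\ref{hecke-lem} interlock cleanly with the two-step application of the raising operators.
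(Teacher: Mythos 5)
Your proposal is correct and takes essentially the same route as the paper's proof: induction on $n$, using Lemma~\ref{tech-lem} with $i=n-1$ to replace the last row by $\insss{B}{\Plast}$, restricting $E_{n-1}(S)$ to $[n-1]$ (your $\tilde S$ is the paper's $W'$) so as to apply the inductive hypothesis, and reassembling via Lemma~\ref{hecke-lem}. The only cosmetic differences are your base case $n=2$ in place of the paper's trivial $n=1$ case, and your explicit crystal-operator check that the restriction $(\tilde S)'$ is highest weight, which the paper obtains implicitly from the identity $\Tab(W'')=\Ptop$.
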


\begin{proof}
This result is trivial when $n=1$ so assume $n\geq 2$. We argue by induction on $n$.
    Let $\Plast$ be the last row of $P$ and form $\Ptop$ from $P$ by omitting $\Plast$. Define $\Rtop$ and $\Rbot$ to be the first two rows of $\insss{B}{\Plast}$, with $\Rbot=\emptyset$ if $\insss{B}{\Plast}$ has only one row.

    Lemma~\ref{tech-lem} tells us that $\Ptop \sqcup \Rtop \sqcup \Rbot = \Tab(E_{n-1}(S))$.
    Thus, if we define $W'$ and $W''$ to be the set-valued words obtained by intersecting each
    entry of $E_{n-1}(S)$ with $[n-1]$ and $[n-2]$, respectively,
    then $\Ptop = \Tab(W'')$
    and
    we may assume by induction that
    \[
    \Tab(E_1E_2\cdots E_{n-2}(W')) = \insss{\Rtop}{\Ptop}.
    \]
   Since it is clear that 
   \[
\Tab(E_1E_2\cdots E_{n-1}(S)) = \Tab(E_1E_2\cdots E_{n-2}(W')) \sqcup \Rbot,
   \]
   we conclude from Lemma~\ref{hecke-lem} that $\Tab(E_1E_2\cdots E_{n-1}(S)) = \insss{\Rtop}{\Ptop} \sqcup \Rbot= \insss{B}{P}$.
\end{proof}

We now use the preceding lemma to quickly prove Theorem~\ref{P: Insertion = rect}.

\begin{proof}[Proof of Theorem~\ref{P: Insertion = rect}]
This result is also trivial when $n=1$ so assume $n\geq 2$. We argue by induction on $n$.
Again let 
$
\rect_j := (E_1E_2 \cdots E_{j-1}) \cdots (E_1 E_2 E_3) (E_1 E_2) (E_1)
$ for $j \in [n]$.
    
    Define $S'$ and $B$ as in Lemma~\ref{med-lem}.
   Then $\rect_{n-1}(S')$ is obtained by intersecting each entry of 
     $\rect_{n-1}(S)$ with $[n-1]$, 
     and the entries of $\rect_{n-1}(S)$ containing $n$ occur in the same positions as in $S$.
    As $\rect_{n-1}(S')$
    is a highest weight element of $\SVWordsss{n-1}{m}$ by Theorem~\ref{rect-thm},
 Lemma~\ref{med-lem}
    implies that 
    $
    \Tab(\rect(S)) 
    = \Tab(E_1E_2\cdots E_{n-1}(\rect_{n-1}(S))) = \insss{B}{\Tab(\rect_{n-1}(S'))} .
    $
    
    We may assume by induction that
    $\Tab(\rect_{n-1}(S'))=\TT(S')$.
    The theorem now follows since the identity
    $\insss{B}{\TT(S')} = \TT(S)$ holds by definition.
\end{proof}

From these results  we deduce
the following main theorems.

\begin{thm}\label{main-thm}
Suppose
$\cB\subseteq \SVWordsss{n}{m}$ is a full subcrystal.
Then 
$S \mapsto (\TT(S), \TQ(S))$
is a bijection from $\cB$ to
the set of pairs of tableaux $(P,Q)$ with the same shape such that 
\begin{enumerate}
    \item[(a)] $P$ is increasing and equal to $\Tab(S)$ for some highest weight element $S \in \cB$, and
    \item[(b)] $Q$ is semistandard and set-valued with all entries contained in $[n]$.
\end{enumerate}
\end{thm}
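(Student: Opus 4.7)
The plan is to obtain this theorem as a restriction of the unrestricted Hecke insertion bijection. The map $S \mapsto (A(S), I(S))$ identifies $\SVWordsss{n}{m}$ with the set of compatible sequences $(A,I)$ whose letters lie in $[m]$ and $[n]$ respectively, and composing with Hecke insertion and invoking its standard bijectivity \cite{BKSTY} shows that $S \mapsto (\TT(S), \TQ(S))$ is already a bijection from $\SVWordsss{n}{m}$ onto the set of all pairs $(P, Q)$ where $P$ is an increasing tableau with entries in $[m]$ and $Q$ is a semistandard set-valued tableau of the same shape with entries in $[n]$. It therefore suffices to show that the image of $\cB$ under this ambient bijection is precisely the subset of pairs satisfying conditions (a) and (b).

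For the forward inclusion, given $S \in \cB$, Theorem~\ref{P: Insertion = rect} identifies $\TT(S) = \Tab(\rect(S))$. Since $\cB$ is a full subcrystal, i.e.\ a weakly connected component of the crystal graph, it is closed under the raising operators $e_i$ and hence under the composite operator $\rect$; thus $\rect(S) \in \cB$, and Theorem~\ref{rect-thm} guarantees that $\rect(S)$ is highest weight. This realizes $P := \TT(S)$ as $\Tab(S_0)$ for the highest weight element $S_0 := \rect(S) \in \cB$, verifying (a); condition (b) is immediate from the general Hecke insertion bijection.

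For the reverse inclusion, suppose $(P, Q)$ satisfies (a) and (b), write $P = \Tab(S_0)$ for some highest weight $S_0 \in \cB$, and let $S \in \SVWordsss{n}{m}$ be the unique preimage of $(P, Q)$ under $(\TT, \TQ)$. Since $e_i(S_0) = 0$ for every $i$, we have $E_i(S_0) = S_0$ for all $i$, so $\rect(S_0) = S_0$. Applying Theorem~\ref{P: Insertion = rect} then gives $\Tab(\rect(S)) = \TT(S) = P = \Tab(S_0)$, and since the map $\Tab$ is injective on $\SVWordsss{n}{m}$ (it is inverted by the map $\gamma$ recalled in Section~\ref{svw-sect}), we conclude $\rect(S) = S_0$. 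Because $S$ and $\rect(S)$ lie in the same weakly connected component of the crystal graph and $\cB$ is such a component, $\rect(S) = S_0 \in \cB$ forces $S \in \cB$, as needed.

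I do not anticipate a serious technical obstacle at this point: all genuine combinatorial content has been absorbed into Theorem~\ref{P: Insertion = rect}, which in turn rests on Lemmas~\ref{tech-lem} and~\ref{hecke-lem}. The present theorem is essentially a formal packaging of the identity $\TT = \Tab \circ \rect$ together with the definition of a full subcrystal and the bijectivity of Hecke insertion.
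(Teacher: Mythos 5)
Your proposal is correct and takes essentially the same route as the paper: both rest on the identity $\TT(S)=\Tab(\rect(S))$ from Theorem~\ref{P: Insertion = rect}, Theorem~\ref{rect-thm} to see that $\rect$ produces highest weight elements, and the ambient bijectivity of Hecke insertion on $\SVWordsss{n}{m}$. Your reverse-inclusion argument (injectivity of $\Tab$ plus the fact that $\rect$ stays within the connected component) merely spells out a step the paper's terser proof leaves implicit.
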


\begin{proof}
Let $\cP = \{\Tab(S) : S\in \HW(\cB)\}$.
Every element of this set is in $\IncT{n}{m}$ by Proposition~\ref{tab-highest}, and $\Tab(S)\notin \cP$ for all $S \in \SVWordsss{n}{m}$ with $S\notin \HW(\cB)$.
Theorems~\ref{rect-thm} and \ref{P: Insertion = rect}
therefore tell us that
if $S \in \SVWordsss{n}{m}$ then $\TT(S) \in \cP$ precisely when $S \in \cB$. 
Given this observation, the theorem follows from the fact that 
$S \mapsto (\TT(S), \TQ(S))$ 
is already known---from \cite{BKSTY} via Proposition~\ref{compat-prop}---to be a bijection from $\SVWordsss{n}{m}$ to
the set of pairs of tableaux $(P,Q)$ of the same shape,
where $P\in \IncT{n}{m}$ and $Q$ is semistandard and set-valued with all entries  in $[n]$.
\end{proof}

Our main application of the previous result is to deduce Theorem~\ref{ch-thm} from the introduction.
Recall that $\HW(\cB)$ denotes the set of highest weight elements in a $\gl_n$- or $\sqgln$-crystal $\cB$.

\begin{proof}[Proof of Theorem~\ref{ch-thm}]
Assume $\cB$ is a finite normal $\sqgln$-crystal.
We wish to show that 
\[\textstyle \ch(\cB) = \sum_{b \in \HW(\cB)} G_{\weight(b)}(x_1,x_2,\dots,x_n).\]
  Theorem~\ref{main-thm} implies this result when $\cB$ is a full subcrystal of $\SVWordsss{n}{m}$,
since it holds that $\weight(S) = \weight(\TQ(S))$ for all $S \in \SVWordsss{n}{m}$
and since $\weight(S)$ is the partition shape of both $\TT(S)$ and $\TQ(S)$ when $S$ is highest weight.
As every connected normal $\sqgln$-crystal is isomorphic to a full subcrystal of $\SVWordsss{n}{m}$ for some $m$,
the theorem follows.
\end{proof}

Suppose $\cB$ is a $\sqgln$-crystal.
Recall from Section~\ref{crystal-sect} that there is a seminormal $\gl_n$-crystal  $\cB^{(2)}$ with the same elements and weight map.
Define 
$\textstyle
\minlevel(\cB) = \min\left\{ \sum_{i \in[n]} \weight(b)_i : b \in \cB\right\}$
and
\begin{equation}
\textstyle
    \cB^{(2:\delta)} = \left\{ b\in \cB : \sum_{i \in [n]} \weight(b)_i = \minlevel(\cB) + \delta\right\}
\quad\text{for each $\delta \in \NN$.}
\end{equation}
Each of these subsets is a full $\gl_n$-subcrystal of $\cB^{(2)}$ and so may be viewed as a seminormal $\gl_n$-crystal on its own.

\begin{cor}\label{2d-cor}
If $\cB$ is a finite normal $\sqgln$-crystal then the character of $\cB^{(2:\delta)}$ is Schur positive.
\end{cor}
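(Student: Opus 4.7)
The plan is to identify $\ch(\cB^{(2:\delta)})$ as a single homogeneous component of $\ch(\cB)$, expand via Theorem~\ref{ch-thm}, and then invoke the well-known Schur positivity of each homogeneous component of a (signless) symmetric Grothendieck polynomial.

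First I would observe that because $\cB$ is normal, every $b \in \cB$ has $\weight(b) \in \NN^n$, as $\cB$ embeds as a full subcrystal of $\SVWordsss{n}{m}$ for some $m$ and the elements of $\SVWordsss{n}{m}$ have only non-negative weight components. Consequently $x^{\weight(b)}$ is a monomial of total degree $\sum_j \weight(b)_j$, and grouping $\ch(\cB) = \sum_{b \in \cB} x^{\weight(b)}$ by this total degree exactly recovers $\ch(\cB^{(2:\delta)})$ as the degree-$(\minlevel(\cB)+\delta)$ homogeneous part. Writing $[f]_d$ for the degree-$d$ piece of a polynomial, Theorem~\ref{ch-thm} then yields
\[
\ch(\cB^{(2:\delta)}) \;=\; \sum_{b\in\HW(\cB)} \bigl[G_{\weight(b)}(x_1,\dots,x_n)\bigr]_{\minlevel(\cB)+\delta}.
\]

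It therefore suffices to show that for any partition $\lambda$ of length at most $n$, each homogeneous component of $G_\lambda(x_1,\dots,x_n)$ is Schur positive. This is a theorem of Lenart, which asserts that the signless symmetric Grothendieck polynomial admits an expansion $G_\lambda = \sum_{\mu \supseteq \lambda} c_{\lambda\mu}\, s_\mu$ with $c_{\lambda\mu}\in\NN$, the coefficients being counted by so-called \emph{elegant fillings} of $\mu/\lambda$. Since each $s_\mu(x_1,\dots,x_n)$ is homogeneous of degree $|\mu|$, the degree-$d$ component of $G_\lambda(x_1,\dots,x_n)$ is a non-negative sum of Schur polynomials, hence Schur positive. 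Substituting into the displayed formula gives the desired conclusion. There is no real technical obstacle here: the corollary follows immediately from Theorem~\ref{ch-thm} together with Lenart's external Schur positivity theorem; the only point requiring care is that the minimum-level slicing on the crystal side matches the degree slicing on the polynomial side, which is exactly what the normality of $\cB$ guarantees.
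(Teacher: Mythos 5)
Your argument is correct and is essentially the paper's own proof: both identify $\ch(\cB^{(2:\delta)})$ with the degree-$(\minlevel(\cB)+\delta)$ homogeneous component of $\ch(\cB)$, apply Theorem~\ref{ch-thm}, and invoke the known Schur positivity of each homogeneous piece of $G_\lambda$ (the paper cites Buch's Eq.~(6.5), listing Lenart's elegant-fillings expansion, which you use, as an equivalent alternative). No substantive difference in route.
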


\begin{proof}
Each homogeneous component of $G_\lambda(x_1,x_2,\dots,x_n)$ is Schur positive.
This follows from \cite[Eq.~(6.5)]{Buch2002} (see also \cite[Thm.~2.2]{Lenart} and \cite[Cor.~3.11]{MoPeSc} for other proofs) after adjusting for signs,
since what is denoted ``$G_\lambda(x)$'' in \cite{Buch2002} is $(-1)^{|\lambda|}G_\lambda(-x)$ in our notation.
The corollary follows from this observation and
Theorem~\ref{ch-thm}, since the character of $\cB^{(2:\delta)}$ is the homogeneous component of $\ch(\cB)$ of degree $\minlevel(\cB)+\delta$.
\end{proof}

\begin{exa}
If $\cB = \SVT_n(\lambda)$, then
applying the corollary to $\cB^{(2:1)}$ shows that the weight-generating function for the \definition{barely set-valued tableaux} \cite{RTY} of a given shape is Schur positive.
 \end{exa}

Using Theorem~\ref{P: Insertion = rect}, we can now also prove Theorem~\ref{dhf-thm} from the introduction.
Fix a permutation $w \in S_\infty$ and recall the definition of $\DHF_n(w)$ from Section~\ref{pi-sect}.

Choose $m \in \PP$ such that $w \in S_{m+1}$. Then the Hecke words of $w$ have all letters in $[m]$.
Given $a \in \DHF_n(w)$, define
$\svword(a) \in \SVWordsss{n}{m}$
to be the set-valued word \begin{equation}
    \svword(a):=(S_1,S_2,\dots,S_m) \quad\text{where $S_j = \{ i \in [n]: m+1-j \in a^i\}$.}
\end{equation}
For example,
if $m=3$ and $w=1432\in S_4$ then $\svword$ acts on $\DHF_2(w)$ from Example~\ref{pi-ex} as
\[
(2,32) \mapsto 
 (\{2\}, \{1,2\}, \emptyset),
 \quad
(32,3)\mapsto 
 (\{1,2\}, \{1\}, \emptyset), 
 \quand
(32,32)\mapsto 
    (\{1,2\}, \{1,2\}, \emptyset).
\]
Recall the definitions of $\concat(a)$ and $\Tab(a)$ for $a \in \DHF_n(w)$ from Section~\ref{pi-sect}.
After unpacking these notations, the following properties are evident:

 \begin{lemma}\label{rts-lem}
If $a \in \DHF_n(w)$  then
it holds that $\concat(a) = \revrow(\Tab(a))$
along with 
\[\Tab(a) = \Tab(\svword(a))
 \quand 
\TT(\concat(a))=\TT(\svword(a))
 .\]
 \end{lemma}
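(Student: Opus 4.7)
The proof plan is just careful bookkeeping: each of the three claims follows by unwinding the definitions of $\concat$, $\Tab$, $\svword$, $\revrow$, and $A(\cdot)$, with no actual Hecke insertion analysis required. I expect no real obstacle—the only place to slip up is keeping straight the two reversal conventions (words $a^i$ are strictly \emph{decreasing} while tableau rows are strictly \emph{increasing}).

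For $\concat(a) = \revrow(\Tab(a))$: by definition, the $i$th row of $\Tab(a)$ is the reversal of $a^i$, which is strictly increasing. Reading rows bottom-up, left-to-right, $\row(\Tab(a))$ is the concatenation $\mathrm{rev}(a^n)\,\mathrm{rev}(a^{n-1})\cdots \mathrm{rev}(a^1)$. Reversing the whole word gives $a^1a^2\cdots a^n=\concat(a)$.

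For $\Tab(a) = \Tab(\svword(a))$: write $\svword(a)=(S_1,\dots,S_m)$. By definition of $\Tab$ for a set-valued word, row $i$ of $\Tab(\svword(a))$ lists the numbers $m+1-j$ in increasing order of $j$, over all $j$ with $i\in S_j$. But $i\in S_j$ iff $m+1-j\in a^i$, so as $j$ increases, $m+1-j$ decreases through the elements of $a^i$. Thus row $i$ of $\Tab(\svword(a))$ records the elements of $a^i$ in increasing order, which is the reversal of $a^i$, matching row $i$ of $\Tab(a)$.

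For $\TT(\concat(a)) = \TT(\svword(a))$: I will show that the word $A(\svword(a))$ is literally equal to $\concat(a)$, from which the identity is immediate since $\TT$ depends only on the $A$-word of the compatible sequence. Writing $a^i=x_1x_2\cdots x_k$ with $x_1>x_2>\cdots>x_k$, the indices $j$ with $i\in S_j$ are $j_\ell := m+1-x_\ell$, sorted as $j_1<j_2<\cdots<j_k$. Then the block $A_i$ of $A(\svword(a))$ equals $(m+1-j_1)(m+1-j_2)\cdots(m+1-j_k) = x_1x_2\cdots x_k = a^i$. Concatenating over $i$ gives $A(\svword(a)) = a^1a^2\cdots a^n = \concat(a)$, which completes the proof.
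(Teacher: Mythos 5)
Your proof is correct and takes essentially the same route as the paper, which simply declares the lemma "evident" after unpacking the notation—your write-up supplies exactly that unpacking, with the key point being the literal identity $A(\svword(a))=\concat(a)$, which indeed settles the third claim since the insertion tableau $\TT$ depends only on the $A$-word. One minor wording slip: row $i$ of $\Tab(\svword(a))$ lists the values $m+1-j$ in increasing order of those values (equivalently, decreasing order of $j$), not "increasing order of $j$"; since these values are precisely the letters of $a^i$ and both rows in question are strictly increasing, your stated conclusion is still forced and the argument stands.
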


We may now present our delayed proof.

\begin{proof}[Proof of Theorem~\ref{dhf-thm}]
 We first argue that $\DHF_n(w)$  has a unique  $\sqgln$-crystal structure 
that makes the operation $\svword$ into a crystal embedding $\DHF_n(w) \to \SVWordsss{n}{m}$.
This structure will make $\DHF_n(w)$ into a normal $\sqgln$-crystal with the desired weight map.

 To this end, notice that $\svword$ is a bijection $\bigsqcup_{w \in S_{m+1}} \DHF_n(w) \to \SVWordsss{n}{m}$.
 Thus, 
    we only need to check that the crystal operators on $\SVWordsss{n}{m}$
    preserve the image of $\DHF_n(w)$  when they do not act as zero.
We show this  
using the fact that if $i \in \cH(w)$ then $\revrow(\TT(i)) \in \cH(w)$.
This property of Hecke insertion follows by \cite[Lem.~1]{BKSTY} after checking that
the row and column words of an increasing tableau are equivalent under the  relation \eqref{hecke-rel}; see \cite[Lem.~2.7]{Mar2019}.

 Choose $a\in \DHF_n(w)$. For some $v \in S_{m+1}$ there is $b \in \DHF_n(v)$
with $\rect(\svword(a)) = \svword(b)$.
As the sets $\DHF_n(w)$ are disjoint, it suffices by Theorem~\ref{rect-thm} 
    to show that  $v=w$, or equivalently that $\concat(b) \in \cH(w)$.
As $\Tab(\rect(S)) 
=  \TT(S)$ for any set-valued word $S \in \SVWordsss{n}{m}$ by Theorem~\ref{P: Insertion = rect}, 
we deduce from Lemma~\ref{rts-lem} that
\[ \Tab(b) = \Tab(\svword(b))=  \Tab(\rect(\svword(a))) 
=  \TT(\svword(a)) = \TT(\concat(a)).\]
Using Lemma~\ref{rts-lem} again along with the property mentioned in the previous paragraph, we get
\[  \concat(b) = \revrow(\Tab(b)) = \revrow(\TT(\concat(a))) \in \cH(w)\]
as needed.
This establishes the first part of Theorem~\ref{dhf-thm}. 

It remains to identify the highest weight elements in the $\sqgln$-crystal $\DHF_n(w)$. 
However, in view of Lemma~\ref{rts-lem}, the desired classification is clear from
Proposition~\ref{tab-highest}. 
\end{proof}

\begin{rem}
Suppose $\cB = \DHF_n(w)$ for some permutation $w \in S_\infty$. A \definition{reduced word} for $w$ is a Hecke word of minimal length.
The set $\cB^{(2:0)}$ 
is a seminormal $\gl_n$-crystal structure on the set of \definition{decreasing reduced factorizations} 
$\textstyle
 \left\{ a \in \DHF_n(w) : \concat(a)\text{ is reduced}\right\}.
$
This object was previously studied in \cite{MorseSchilling},
and turns out to be a normal $\gl_n$-crystal.
A nontrivial argument is required to establish this normality, as the squaring operation $\cB \mapsto \cB^{(2)}$ does not send normal $\sqgln$-crystals to normal $\gl_n$-crystals. 

The character of $\cB^{(2:0
)}$ in this instance is the \definition{Stanley symmetric function} of $w $ defined in \cite{Stan}.
Corollary~\ref{2d-cor}
implies that this function is Schur positive, which was first shown in \cite{EG}.
\end{rem}

\section{Square root Demazure crystals}

Suppose $\cB$ is a finite normal $\gl_n$-crystal.
For any $i \in [n-1]$ and $X \subseteq \cB$ define 
\begin{equation}\label{fkD-eq} \fkD_i (X) := \left\{ b \in \cB : e_i^k(b) \in X\text{ for some }k \in \NN\right\}.
\end{equation}
Given $w \in S_n$ with a reduced word $i_1i_2\cdots i_k$, we consider the \definition{Demazure crystal}
\begin{equation}\label{fkDfkD-eq}
\cB_w := \fkD_{i_1}\fkD_{i_2}\cdots \fkD_{i_k}(\HW(\cB)).
\end{equation}
This subset does not depend on the choice of reduced word for $w$ by \cite[Thm.~13.5]{BumpSchilling} (see also \cite{Kashiwara93}).
Its \definition{character} is defined to be $\ch(\cB_w) = \sum_{b \in \cB_w} x^{\weight(b)} \in \ZZ[x_1,x_2,\dots,x_n]$.

The Demazure crystal $\cB_w$ is usually not a full subcrystal of $\cB$, 
 and so is not necessarily a seminormal $\gl_n$-crystal in the sense of Definition~\ref{crystal-def}. However, if
we view this object as simply a directed graph
with vertices weighted by $\ZZ^n$ and edges labeled by $[n-1]$, then
$\cB_w$ 
is determined up to isomorphism by its character. Morever, this character 
is always a $\NN$-linear combination of \definition{key polynomials} 
as defined below.

In this final section we describe a square root analogue of the Demazure crystal $\cB_w$ and present a positivity conjecture about its character that would generalize Theorem~\ref{ch-thm}.

\subsection{Key and Lascoux polynomials}\label{sG-sect}

Key and Lascoux polynomials are non-symmetric generalizations
of Schur and Grothendieck polynomials, which are usually defined in terms of the following \definition{divided difference operators}.

Recall that  $x_1,x_2,\dots,x_n$ are commuting indeterminates.
The symmetric group $S_n$ acts on $\ZZ[x_1,x_2,\dots,x_n]$
by permuting variables.
Fix $i \in [n-1]$ and let $s_i = (i,i+1) \in S_n$, so that if $f \in \ZZ[x_1,x_2,\dots,x_n]$ then $s_if = f(\dots, x_{i+1},x_i,\dots)$.
Then set
\begin{equation}
\partial_i(f) := \tfrac{f - s_if}{x_i - x_{i+1}}
\quand
\partial^K_i(f) := \partial_i((1 + x_{i+1})f)  = \tfrac{(1 + x_{i+1})f - (1+ x_i)s_if}{x_i- x_{i+1}}.
\end{equation}
Both $\partial_i$ and $\partial^K_i$ are $\ZZ$-linear maps from $\ZZ[x_1,x_2,\dots,x_n]\to\ZZ[x_1,x_2,\dots,x_n]$.
We also define 
\begin{equation}
\pi_i (f) = \partial_i(x_i f)\quand  \pi^K_i( f)= \partial^K_i(x_i f).
\end{equation}
It is easy to check that these operators satisfy
\begin{equation}\label{dd-eq}
\partial_i\partial_i=0,\qquad \partial_i^K\partial_i^K=-1,\qquad \pi_i\pi_i=\pi_i, \qquand \pi^K_i\pi^K_i=\pi^K_i.\end{equation}

A \definition{weak composition}  
 is an $n$-tuple $\alpha=(\alpha_1,\alpha_2,\dots,\alpha_n)$ of non-negative integers.
The group $S_n$ acts on weak compositions on the right by permuting entries.
This means that if $i \in [n-1]$ then $\alpha s_i$ is the weak composition
obtained by swapping entries $\alpha_i$ and $\alpha_{i+1}$ of $\alpha$.
To each weak composition $\alpha$, there is an associated \definition{key polynomial} $\kappa_\alpha$
and \definition{Lascoux polynomial} $\fL_\alpha$. These polynomials are defined recursively by setting
$\kappa_\alpha = \fL_{\alpha} = x^\alpha$ when $\alpha$ is weakly decreasing,
and then requiring that 
\begin{equation}\label{kappa-eq}
\pi_i( \kappa_\alpha) = \begin{cases}
    \kappa_{\alpha s_i} & \text{if $\alpha_i>\alpha_{i+1}$}\\
    \kappa_\alpha &\text{if $\alpha_i \leq \alpha_{i+1}$}
    \end{cases}
\quand 
\pi^K_i( \fL_\alpha) = \begin{cases}
    \fL_{\alpha s_i} & \text{if $\alpha_i>\alpha_{i+1}$}\\
    \fL_\alpha &\text{if $\alpha_i \leq \alpha_{i+1}$.}
    \end{cases}
\end{equation}

Results in \cite{Mon16,ReinerShimozono} show that if $\lambda= (\lambda_1\geq \lambda_2 \geq \dots\geq \lambda_n\geq 0)$ is a partition with at most $n$ parts and $\rev(\lambda) := (\lambda_n,\dots,\lambda_2,\lambda_1)$ denotes its reversal, then 
\begin{equation}
s_\lambda(x_1,\dots,x_n) := \kappa_{\rev(\lambda)}
\quand
G_\lambda(x_1,\dots,x_n) := \fL_{\rev(\lambda)}.\end{equation}
In addition, if $\cB_w$ is the Demazure crystal defined in \eqref{fkDfkD-eq}, then one has 
\begin{equation}
\label{EQ: Demazure character}
\textstyle
    \ch(\cB_w) = \sum_{b \in \HW(\cB)}  \pi_{i_1}\pi_{i_2}\cdots \pi_{i_k} x^{\weight(b)}
\end{equation}
as a consequence of the \definition{Demazure character formula}  \cite[Thm.~13.7]{BumpSchilling}.
Notice that each summand is equal to $\kappa_\alpha$ for some weak composition $\alpha$ in the $S_n$-orbit of the partition $\weight(b) \in \ZZ^n$ by \eqref{kappa-eq}.

\subsection{Rectangular pipe dreams}

This section presents an equivalent way of constructing the $\sqgln$-crystal $\SVWordsss{n}{m}$ in terms of the following objects,
which are the special cases of 
(non-reduced) \definition{pipe dreams}
considered previously, for example, in \cite{bergeron-billey,KnutsonMiller2004,KnutsonMiller}.
 
\begin{defn}
A \definition{rectangular pipe dream (RPD)} of size $n$-by-$m$
is a rectangular array of $n$ rows and $m$ columns,
filled by two types of tiles drawn as $\bumptile$ (the ``bump'' tile) and $\ptile$ (the ``crossing'' tile).
We index the rows and columns using matrix coordinates,
so that row $1$ is the highest row 
and column $1$ is the leftmost column. 
Let $\RPD_{m,n}$ be the set of all such RPDs.
\end{defn}

Each element of $\RPD_{m, n}$ is associated with a permutation
in $S_{m + n}$.
To find this permutation, we trace the pipes that are drawn by the RPD's tiles in the following way.

Consider the pipes that enter from the left edge and bottom edge. 
The pipe entering from row $i$ of the left edge
has label $i$.
The pipe entering from column $j$ of the bottom edge
has label $n + j$.
When pipes with labels $a$ and $b$ enter
from the left and bottom edges of a $\ptile$,
we consider the pipe with label $\max(a,b)$
to exit from the top while the other pipe exits from the right. 
Finally, we obtain the permutation of the RPD
by reading the numbers from left to right on the top edge,
then from top to bottom on the right edge.

\begin{defn}\label{sigma-def}
Given $D \in \RPD_{m,n}$,
let $\sigma_D \in S_{m+n}$ be its associated permutation.
Then, 
for any permutation $w \in S_{m+n}$, 
let $\RPD_{m,n}(w) = \{D\in \RPD_{m,n} : \sigma_D = w\}$.
\end{defn}

The permutation $\sigma_D \in S_{m+n}$ associated to $D \in \RPD_{m,n}$
can also be defined in terms of the Demazure product from Section~\ref{pi-sect}.
List the positions of the $\ptile$ tiles in $D$ row by row from left to right, but starting with the bottom row. 
In other words, order these positions  
$(r_1,c_1),(r_2,c_2),\dots,(r_l,c_l)$ 
in the unique way such that $r_1 \geq r_2 \geq \dots \geq r_l$ and $c_j < c_{j+1}$ whenever $r_j = r_{j+1}$.
Then it holds that 
$\sigma_D=s_{i_1}\circ s_{i_2} \circ \cdots \circ s_{i_l}$
for the indices
$i_j := r_j + c_j - 1$.

In the terminology of 
\cite{KnutsonMiller2004,KnutsonMiller},
our family $\RPD_{m,n}(w)$
is the subset of \definition{(non-reduced) pipe dreams} for $w^{-1}$ with all crossings in the first $n$ rows and first $m$ columns.

\begin{exa}
\label{E: RPDs}
The following elements of $\RPD_{4,3}$ 
$$
\begin{tikzpicture}[x=1.5em,y=1.5em,thick,rounded corners, color = blue]
\draw[step=1,black, thin] (0,0) grid (4,3);
\node[color=black] at (-0.5,2.5) {$1$};
\draw(0, 2.5)--(.5,2.5)--(.5, 3);
\node[color=black] at (-0.5,1.5) {$2$};
\draw(0, 1.5)--(2.5,1.5)--(2.5, 2.5)--(3.5, 2.5)--(3.5, 3);
\node[color=black] at (-0.5,.5) {$3$};
\draw(0, .5)--(.5, .5)--(.5, 2.5)--(1.5,2.5)--(1.5, 3);
\node[color=black] at (.5,-0.5) {$4$};
\draw(.5, 0)--(.5, .5)--(4, .5);
\node[color=black] at (1.5,-0.5) {$5$};
\draw(1.5, 0)--(1.5, 2.5)--(2.5,2.5)--(2.5, 3);
\node[color=black] at (2.5,-0.5) {$6$};
\draw(2.5, 0)--(2.5, 1.5)--(4, 1.5);
\node[color=black] at (3.5,-0.5) {$7$};
\draw(3.5, 0)--(3.5, 2.5)--(4, 2.5);
\node[color=red] at (0.5,3.5) {$1$};
\node[color=red] at (3.5,3.5) {$2$};
\node[color=red] at (1.5,3.5) {$3$};
\node[color=red] at (4.5,0.5) {$4$};
\node[color=red] at (2.5,3.5) {$5$};
\node[color=red] at (4.5,1.5) {$6$};
\node[color=red] at (4.5,2.5) {$7$};
\end{tikzpicture}
\quad\quad
\begin{tikzpicture}[x=1.5em,y=1.5em,thick,rounded corners, color = blue]
\draw[step=1,black, thin] (0,0) grid (4,3);
\node[color=black] at (-0.5,2.5) {$1$};
\draw(0, 2.5)--(.5,2.5)--(.5, 3);
\node[color=black] at (-0.5,1.5) {$2$};
\draw(0, 1.5)--(2.5,1.5)--(2.5, 3);
\node[color=black] at (-0.5,.5) {$3$};
\draw(0, .5)--(.5, .5)--(.5, 2.5)--(1.5,2.5)--(1.5, 3);
\node[color=black] at (.5,-0.5) {$4$};
\draw(.5, 0)--(.5, .5)--(4, .5);
\node[color=black] at (1.5,-0.5) {$5$};
\draw(1.5, 0)--(1.5, 2.5)--(3.5,2.5)--(3.5, 3);
\node[color=black] at (2.5,-0.5) {$6$};
\draw(2.5, 0)--(2.5, 1.5)--(4, 1.5);
\node[color=black] at (3.5,-0.5) {$7$};
\draw(3.5, 0)--(3.5, 2.5)--(4, 2.5);
\node[color=red] at (0.5,3.5) {$1$};
\node[color=red] at (3.5,3.5) {$2$};
\node[color=red] at (1.5,3.5) {$3$};
\node[color=red] at (4.5,0.5) {$4$};
\node[color=red] at (2.5,3.5) {$5$};
\node[color=red] at (4.5,1.5) {$6$};
\node[color=red] at (4.5,2.5) {$7$};
\end{tikzpicture}
$$
are both
associated with the permutation 
\[1352764 =s_4 \circ s_5 \circ s_6 \circ s_2 \circ s_3 \circ s_5
=
s_4 \circ s_5 \circ s_6 \circ s_2 \circ s_3 \circ s_5 \circ s_3
\in S_7.\]
 Notice that in the left RPD there are no pipes that cross more than once, so the labels on the top and right edges can be read off by tracing the pipes in the intuitive way.
 However, in the right RPD there some pipes that cross multiple times. Specifically, the pipes labeled by $2$ on the left edge and by $5$ on the bottom edge cross twice. The $\ptile$ tile at either of these crossings could be replaced by  a $\bumptile$ without changing the associated permutation, since the top edge of a $\ptile$ is always labeled by the maximum of the labels on the left and bottom edges.
\end{exa}

The following operation is a straightforward bijection from $\SVWordsss{n}{m}$
to $\RPD_{m,n}$:
we send a set-valued word $(S_1, S_2,\cdots, S_m) \in \SVWordsss{n}{m}$
to the RPD whose tile at row $i$ column $j$
is $\bumptile$ if $i \in S_j$ and $\ptile$ if $i \notin S_j$. 
For instance, 
the RPD's in Example~\ref{E: RPDs} are respectively associated with 
\[(\{1,3\},\{1\},\{1,2\},\{1\})
\quand (\{1,3\},\{1\},\{2\},\{1\}).\]
We give $\RPD_{m,n}$ the unique (normal) $\sqgln$-crystal structure that makes this bijection an isomorphism.
The following result describes
the permutations associated to the RPDs in a given $i$-string for this crystal structure.
 
\begin{prop}\label{P: string and Permutation}
Let $D_0, D_1, \cdots, D_l$ be an $i$-string
in $\RPD_{m,n}$ with associated permutations $u_0, u_1,\cdots, u_l \in S_{m+n}$.
If row $i$ and row $i+1$ of $D_0$ only contain $\ptile$ tiles, then
we must have $l = 0$ as well as $u_0(m + i) = i$ and $u_0(m+i + 1) = i+1$.
Otherwise, the following holds:
\begin{itemize}
\item[(a)] The permutation $u_0$ has a  descent at $m+i$, in the sense that
$u_0(m + i) > u_0(m + i+1)$.
\item[(b)] The value $i$ appears after the value $i+1$ 
in $u_l$,
in the sense that $u_l^{-1}(i) > u_l^{-1}(i+1)$.
\item[(c)] It holds that $u_1 = \cdots = u_{l-1} \in \{u_0,u_0 s_{m+i}\} \cap \{u_l, s_i u_l\}$.
 
\end{itemize}
\end{prop}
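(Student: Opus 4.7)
My plan is to analyze the $i$-string by factoring the Demazure product of $\sigma_D$ according to row groups and reducing everything to the two middle rows.

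First I would decompose $\sigma_D = \alpha_D \circ \beta_D \circ \gamma_D \circ \delta_D$, where $\alpha, \beta, \gamma, \delta$ are the Demazure products (in the standard bottom-up, left-to-right order) arising from the row groups $\{n,\ldots,i+2\}$, $\{i+1\}$, $\{i\}$, $\{i-1,\ldots,1\}$, respectively. Because $f_i$ modifies only rows $i$ and $i+1$, the outer factors $\alpha_D$ and $\delta_D$ remain constant along the entire $i$-string. The parabolic subgroup containing each factor gives explicit fixed-point data: $\delta$ fixes every position $\geq m+i$; $\gamma$ fixes every position $\geq m+i+1$; $\beta$ is supported on positions $\{i+1,\ldots,i+m+1\}$; and $\alpha$ fixes every value $\leq i+1$. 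It then follows that $u_j(m+i)=\alpha(\beta_j(\gamma_j(m+i)))$ and $u_j(m+i+1)=\alpha(\beta_j(m+i+1))$, and also that $s_i$ commutes with $\alpha$ while $s_{m+i}$ commutes with $\delta$. Consequently all three conditions in the proposition translate into parallel statements about the middle factor $\mu_j := \beta_j \circ \gamma_j$, so it suffices to show (i) $\mu_0$ has a descent at $m+i$, (ii) $\mu_l$ has a value inversion at $(i, i+1)$, and (iii) $\mu_1 = \cdots = \mu_{l-1}$ lies in $\{\mu_0, \mu_0 s_{m+i}\} \cap \{\mu_l, s_i \mu_l\}$.

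Next I would handle the degenerate case where rows $i, i+1$ are all $\ptile$: the $i$-word is empty (so $l = 0$), and a direct computation with the strictly increasing products $\gamma = s_i s_{i+1}\cdots s_{i+m-1}$ and $\beta = s_{i+1}\cdots s_{i+m}$ gives $\gamma(m+i)=i$ and $\beta(m+i+1)=i+1$, hence $u_0(m+i)=i$ and $u_0(m+i+1)=i+1$. In the non-degenerate case, since $D_0$ is highest weight, Proposition~\ref{sv-e-def} and its $f_i$-analogue tell us the $i$-word of $D_0$ consists only of right forms $R_1,\ldots,R_R$ and null forms, and $f_i$-applications alternate case~(c) (which removes $s_{i+j_k^{(c)}}$ from $\beta$ by changing $\ptile \to \bumptile$ at row $i+1$ in the column $j_k^{(c)}$ of the rightmost ``$)$'' of the current last right form $R_{R-k+1}$) and case~(a) (which inserts $s_{i+j_k^{(a)}-1}$ into $\gamma$ by changing $\bumptile \to \ptile$ at row $i$ in the column $j_k^{(a)}$ of the leftmost ``$)-($'' of the resulting combined form). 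This completely prescribes the sequences $\beta_0,\ldots,\beta_l$ and $\gamma_0,\ldots,\gamma_l$.

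With this explicit description in hand I would verify (i), (ii), (iii) as follows. For (i), the non-degenerate hypothesis forces at least one $\bumptile$ in rows $i, i+1$, and pipe-tracing through the rightmost such $\bumptile$—combined with the absence of left or combined forms in $D_0$'s $i$-word—yields $\mu_0(m+i) > \mu_0(m+i+1)$. For (ii) I would apply the symmetric pipe-tracing argument at $D_l$, which is lowest weight and whose $i$-word contains only left and null forms. For (iii), Demazure idempotency guarantees that each middle $f_i$-step leaves $\mu_j$ unchanged: the alternating case~(c)/case~(a) pattern, together with the pairing structure between the surviving right forms and the newly-created left forms, ensures the identity $s\circ\mu=\mu$ or $\mu\circ s=\mu$ holds at the moment the reflection is added or removed, so it is absorbed. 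Only the first transition $\mu_0 \to \mu_1$ (equivalent to right-multiplication by $s_{m+i}$) and the last transition $\mu_{l-1} \to \mu_l$ (equivalent to left-multiplication by $s_i$) genuinely change the permutation, which produces the intersection formula.

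The main obstacle is part (iii): establishing that every middle $f_i$-step preserves $\mu_j$ requires a careful induction that tracks how each added or removed simple reflection interacts with the current Demazure factorization, and verifying the required idempotency at each step depends delicately on the pairing structure of right and left forms produced by the alternating case~(c)/case~(a) procedure.
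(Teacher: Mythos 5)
Your overall plan (factor $\sigma_D$ by row groups, reduce everything to rows $i$ and $i+1$, and track how $f_i$ toggles tiles in those two rows via the right/combined/left-form mechanics) is a genuinely different route from the paper, which instead inducts on the number of columns $m$, deleting the last column and case-analyzing its configuration in rows $i,i+1$ together with $\varepsilon_i$ of the truncated string. But as written your argument has two genuine gaps. First, the reduction to the middle factor $\mu_j=\beta_j\circ\gamma_j$ treats the Demazure product as if it were ordinary composition: the identity $u_j(m+i)=\alpha(\beta_j(\gamma_j(m+i)))$ and the commutation tricks with $\alpha$ and $\delta$ presuppose that $\sigma_{D_j}$ equals the ordinary product $\alpha\,\beta_j\,\gamma_j\,\delta$ of the row-group factors. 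That fails in general, because a pair of pipes can cross in two different row strips (the paper's Example~\ref{E: RPDs} exhibits exactly this), and then the Demazure product of the factors differs from their ordinary product. To salvage the reduction you would have to argue with the label sequences entering and leaving each row strip (the $0$-Hecke action on sequences), which is a substantially more delicate bookkeeping than evaluating composed permutations, and it is not supplied.

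Second, and more seriously, part (c) --- the heart of the proposition --- is not actually proved. The letter removed at a case-(c) step is $s_{i+j}$ for the column $j$ of the affected tile in row $i+1$, and the letter inserted at a case-(a) step is $s_{i+j'-1}$ for some other column $j'$; neither is literally $s_{m+i}$ or $s_i$. So the assertions that the first transition ``is'' right multiplication by $s_{m+i}$, that the last ``is'' left multiplication by $s_i$, and that every intermediate toggle is absorbed by ``Demazure idempotency and the pairing structure'' are precisely the statements to be established, not consequences of your setup. (Note also that the proposition allows $u_1=u_0$ and $u_{l-1}=u_l$, so the extreme steps need not change the permutation at all, which your narrative glosses over.) You flag this as the main obstacle but give no mechanism for why removing a crossing in a middle step leaves the traced permutation unchanged; this is exactly where the paper's induction on $m$, with its four-case analysis of the last column and of $\varepsilon_i(D_0')\in\{0,1,2\}$, does the real work. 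Parts (a) and (b) are likewise only sketched (``pipe-tracing through the rightmost bump tile''), though these are more routine to complete. As it stands the proposal is a plausible plan, not a proof.
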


Before giving the proof,
we explain this statement through an example.

\begin{exa}
\label{E: string}
Consider the following $1$-string from $\SVWordsss{2}{2}$:
$$
(\{1\},\{1\})\quad\xrightarrow{f_1}\quad
(\{1\},\{1,2\})\quad\xrightarrow{f_1}\quad
(\{1\},\{2\})\quad\xrightarrow{f_1}\quad
(\{1,2\},\{2\})\quad\xrightarrow{f_1}\quad
(\{2\},\{2\})
$$
which corresponds to the following $1$-string in $\RPD_{2,2}$:
$$
\begin{tikzpicture}[x=1.5em,y=1.5em,thick,rounded corners, color = blue]
\draw[step=1,black, thin] (0,0) grid (2,2);
\node[color=black] at (-0.5,1.5) {$1$};
\draw(0, 1.5)--(.5,1.5)--(.5, 2);
\node[color=black] at (-0.5,.5) {$2$};
\draw(0, .5)--(2, .5);
\node[color=black] at (.5,-0.5) {$3$};
\draw(.5, 0)--(.5, 1.5)--(1.5, 1.5)--(1.5,2);
\node[color=black] at (1.5,-0.5) {$4$};
\draw(1.5, 0)--(1.5, 1.5)--(2, 1.5);
\node[color=red] at (.5,2.5) {$1$};
\node[color=red] at (1.5,2.5) {$3$};
\node[color=red] at (2.5,1.5) {$4$};
\node[color=red] at (2.5,0.5) {$2$};
\end{tikzpicture}
\quad\raisebox{1 cm}{$\xrightarrow{f_1}$}\quad
\begin{tikzpicture}[x=1.5em,y=1.5em,thick,rounded corners, color = blue]
\draw[step=1,black, thin] (0,0) grid (2,2);
\node[color=black] at (-0.5,1.5) {$1$};
\draw(0, 1.5)--(.5,1.5)--(.5, 2);
\node[color=black] at (-0.5,.5) {$2$};
\draw(0, .5)--(1.5, .5)--(1.5, 1.5)--(2, 1.5);
\node[color=black] at (.5,-0.5) {$3$};
\draw(.5, 0)--(.5, 1.5)--(1.5, 1.5)--(1.5,2);
\node[color=black] at (1.5,-0.5) {$4$};
\draw(1.5, 0)--(1.5, .5)--(2, .5);
\node[color=red] at (.5,2.5) {$1$};
\node[color=red] at (1.5,2.5) {$3$};
\node[color=red] at (2.5,1.5) {$2$};
\node[color=red] at (2.5,0.5) {$4$};
\end{tikzpicture}
\quad\raisebox{1 cm}{$\xrightarrow{f_1}$}\quad
\begin{tikzpicture}[x=1.5em,y=1.5em,thick,rounded corners, color = blue]
\draw[step=1,black, thin] (0,0) grid (2,2);
\node[color=black] at (-0.5,1.5) {$1$};
\draw(0, 1.5)--(.5,1.5)--(.5, 2);
\node[color=black] at (-0.5,.5) {$2$};
\draw(0, .5)--(1.5, .5)--(1.5,2);
\node[color=black] at (.5,-0.5) {$3$};
\draw(.5, 0)--(.5, 1.5)--(2, 1.5);
\node[color=black] at (1.5,-0.5) {$4$};
\draw(1.5, 0)--(1.5, .5)--(2, .5);
\node[color=red] at (.5,2.5) {$1$};
\node[color=red] at (1.5,2.5) {$3$};
\node[color=red] at (2.5,1.5) {$2$};
\node[color=red] at (2.5,0.5) {$4$};
\end{tikzpicture}
\quad\raisebox{1 cm}{$\xrightarrow{f_1}$}\quad
\begin{tikzpicture}[x=1.5em,y=1.5em,thick,rounded corners, color = blue]
\draw[step=1,black, thin] (0,0) grid (2,2);
\node[color=black] at (-0.5,1.5) {$1$};
\draw(0, 1.5)--(.5,1.5)--(.5, 2);
\node[color=black] at (-0.5,.5) {$2$};
\draw(0, .5)--(.5, .5)--(.5,1.5)--(2,1.5);
\node[color=black] at (.5,-0.5) {$3$};
\draw(.5, 0)--(.5, .5)--(1.5, .5)--(1.5, 2);
\node[color=black] at (1.5,-0.5) {$4$};
\draw(1.5, 0)--(1.5, .5)--(2, .5);
\node[color=red] at (.5,2.5) {$1$};
\node[color=red] at (1.5,2.5) {$3$};
\node[color=red] at (2.5,1.5) {$2$};
\node[color=red] at (2.5,0.5) {$4$};
\end{tikzpicture}
\quad\raisebox{1 cm}{$\xrightarrow{f_1}$}\quad
\begin{tikzpicture}[x=1.5em,y=1.5em,thick,rounded corners, color = blue]
\draw[step=1,black, thin] (0,0) grid (2,2);
\node[color=black] at (-0.5,1.5) {$1$};
\draw(0, 1.5)--(2, 1.5);
\node[color=black] at (-0.5,.5) {$2$};
\draw(0, .5)--(.5, .5)--(.5,2);
\node[color=black] at (.5,-0.5) {$3$};
\draw(.5, 0)--(.5, .5)--(1.5, .5)--(1.5, 2);
\node[color=black] at (1.5,-0.5) {$4$};
\draw(1.5, 0)--(1.5, .5)--(2, .5);
\node[color=red] at (.5,2.5) {$2$};
\node[color=red] at (1.5,2.5) {$3$};
\node[color=red] at (2.5,1.5) {$1$};
\node[color=red] at (2.5,0.5) {$4$};
\end{tikzpicture}.
$$
For this example, the proposition claims the following statements, which can be verified directly:
\begin{itemize}
\item[(a)] We have $u_0 = 1342$,
so $3=m+i$ (for $m=2$ and $i=1$) is a descent of $u_0$.
\item[(b)] We have $l=4$ and $u_l = 2314$,
so $i=1$ appears after $i+1=2$ in $u_l$.
\item[(c)] We have $u_1 = u_2 = u_3 = 1324 = u_0 s_{m+i} = s_i u_l$ (for $m=2$ and $i=1$).
\end{itemize}
\end{exa}

\begin{proof}[Proof of Proposition~\ref{P: string and Permutation}]
We prove the statement by induction on $m$.
Let $D_i'$ be the RPD obtained by removing
the last column of $D_i$.
We first consider $D_0$.
Row $i$ and row $i+1$ in the last column
of $D_j$ must be one of the following four
configurations:
$$
\begin{tikzpicture}[x=1.5em,y=1.5em,thick,rounded corners, color = blue]
\draw[step=1,black, thin] (0,0) grid (1,2);
\node[color=black] at (-0.5,1.5) {$a_j$};
\node[color=black] at (-0.5,.5) {$b_j$};
\node[color=black] at (.5,-.5) {$c_j$};
\node[color=black] at (1.5,1.5) {$a'_j$};
\node[color=black] at (1.5,.5) {$b'_j$};
\node[color=black] at (.5,2.5) {$c'_j$};
\node[color=red] at (.5,-1.5) {(A)};
\draw(.5, 0)--(.5, 2);
\draw(0, 1.5)--(1, 1.5);
\draw(0, 0.5)--(1, .5);
\end{tikzpicture}
\quad\quad\quad
\begin{tikzpicture}[x=1.5em,y=1.5em,thick,rounded corners, color = blue]
\draw[step=1,black, thin] (0,0) grid (1,2);
\node[color=black] at (-0.5,1.5) {$a_j$};
\node[color=black] at (-0.5,.5) {$b_j$};
\node[color=black] at (.5,-.5) {$c_j$};
\node[color=black] at (1.5,1.5) {$a'_j$};
\node[color=black] at (1.5,.5) {$b'_j$};
\node[color=black] at (.5,2.5) {$c'_j$};
\draw(0, 1.5)--(.5, 1.5)--(.5, 2);
\draw(0.5, 0)--(.5, 1.5)--(1, 1.5);
\draw(0, 0.5)--(1, .5);
\node[color=red] at (.5,-1.5) {(B)};
\end{tikzpicture}
\quad\quad\quad
\begin{tikzpicture}[x=1.5em,y=1.5em,thick,rounded corners, color = blue]
\draw[step=1,black, thin] (0,0) grid (1,2);
\node[color=black] at (-0.5,1.5) {$a_j$};
\node[color=black] at (-0.5,.5) {$b_j$};
\node[color=black] at (.5,-.5) {$c_j$};
\node[color=black] at (1.5,1.5) {$a'_j$};
\node[color=black] at (1.5,.5) {$b'_j$};
\node[color=black] at (.5,2.5) {$c'_j$};
\draw(0, 1.5)--(.5, 1.5)--(.5, 2);
\draw(0.5, 0)--(0.5, 0.5)--(1,.5);
\draw(0, 0.5)--(.5, .5)--(.5, 1.5)--(1, 1.5);
\node[color=red] at (.5,-1.5) {(C)};
\end{tikzpicture}
\quad\quad\quad
\begin{tikzpicture}[x=1.5em,y=1.5em,thick,rounded corners, color = blue]
\draw[step=1,black, thin] (0,0) grid (1,2);
\node[color=black] at (-0.5,1.5) {$a_j$};
\node[color=black] at (-0.5,.5) {$b_j$};
\node[color=black] at (.5,-.5) {$c_j$};
\node[color=black] at (1.5,1.5) {$a'_j$};
\node[color=black] at (1.5,.5) {$b'_j$};
\node[color=black] at (.5,2.5) {$c'_j$};
\node[color=red] at (.5,-1.5) {(D)};
\draw(0, 1.5)--(1, 1.5);
\draw(0.5, 0)--(0.5, 0.5)--(1,.5);
\draw(0, 0.5)--(.5, .5)--(.5, 2);
\end{tikzpicture}
$$
where $a_j,b_j,c_j,a_j',b_j',c_j'$ are the labels of the pipes.

We know the last column of $D_0$ must be 
in Case~(A) or Case~(B).
In Case~(A), $D_0', \dots, D_l'$ is an $i$-string,
and the last columns of $D_0, \dots, D_l$
are identical.
The result follows immediately by applying
the inductive hypothesis to the $i$-string 
$D_0', \dots, D_l'$.

Now assume the last column of $D_0$ 
is in Case~(B).
We have $a_0' = \max(b_0,c_0)$ and 
$b_0' = \min(b_0,c_0)$,
so $u_0(m + i) = a_0' > b_0' = u_0(m + i + 1)$.

Next, consider the edge case where $D_0'$ 
is an $i$-string of length $1$.
In this case, $l = 2$ while $D_1$ and $ D_2$ are obtained 
by changing the last column of $D_0$
to Cases~(C) and (D), respectively.
Thus, $a_j$, $b_j$, and $c_j$ remain unchanged 
for $j = 0, 1, 2$.

Suppose $D_0'$ contains only $\ptile$ tiles 
in rows $i$ and $i+1$.
Then $a_0 = i$, $b_0 = i+1$, and $c_0 > i+1$.
Clearly, $u_1 = u_0 s_{m+i}$ and 
$u_2 = s_i u_1$.
To show that $u_2^{-1}(i) > u_2^{-1}(i+1)$,
observe that pipe $i$ and pipe $i+1$ 
cross in row $i$ of the last column in $D_2$.
Otherwise, applying the inductive hypothesis 
to the $i$-string of $D_0'$,
we conclude that
(i) $a_0 > b_0$, and 
(ii) pipe $i$ and pipe $i+1$ have already crossed 
in $D_0'$.
From (ii), it follows that $u_2^{-1}(i) > u_2^{-1}(i+1)$.
From (i), we see that $u_2 = u_1$.
In addition, our claim that 
$u_1 \in\{ u_0, u_0 s_{m+i}\}$
is immediate.

Now, assume the $i$-string of $D_0'$
has more than one element,
so $D_l'$ is the sink of its $i$-string.
By the inductive hypothesis,
pipe $i$ and pipe $i+1$ already cross
in $D_l'$,
so $u_l^{-1}(i) > u_l^{-1}(i+1)$.
Furthermore, the top and right edge labels
of $D_{l-1}'$ agree with those of $D_l'$
or differ by swapping $i$ and $i+1$.
Since $D_l$ and $D_{l-1}$ agree in the 
last column, we have $u_{l-1} \in \{u_l,s_i u_l\}$.

Finally, we show that $u_1 = \cdots = u_{l-1}$
and $u_1 \in \{u_0, u_0 s_{m+i}\}$
by analyzing $\varepsilon_i(D_0')$.

\begin{itemize}
\item Case 1: suppose $\varepsilon_i(D_0') = 0$,
so $D_0' = D_1' = D_2'$.
By the inductive hypothesis, 
$a_j' > b_j'$ for $j = 0, 1, 2$.
We know that $D_1$ and $D_2$ are obtained by changing the 
last column of $D_0$ into Cases~(C) and (D), respectively.
Therefore, $u_1 \in \{u_0,u_0s_{m+i}\}$,
and we deduce that
$u_2 = u_1$ from the fact that $a_2 > b_2$.
Finally, we know that $D_2', \cdots, D_l'$
is an $i$-string.
By the inductive hypothesis, 
$c_k = c_2$ and 
$(a_k, b_k)$ is either $(a_2, b_2)$ or $(b_2, a_2)$
for $3 \leq k < l$.
Thus, $a_k' = \min(a_k, b_k) = \min(a_2, b_2) = a_2'$
and $b_k' = b_2'$ similarly,
so $u_2 = u_3 = \cdots = u_{l-1}$.

\item Case 2: suppose $\varepsilon_i(D_0') = 1$.
Then we know that $D_0' = D_1'$,
and $D_1', \dots, D_l'$
is an $i$-string without the root. 
The last column of $D_1$ is in Case~(C),
and then stays unchanged in $D_2, \cdots, D_l$.
Similar to Case 1, 
we have $u_1 = u_0$ or $u_0 s_{m + i}$.
By the inductive hypothesis, we have
$a_k = a_1$, $b_k = b_1$ and $c_k = c_1$
for $2 \leq k < l$,
so $u_1 = u_2 \cdots = u_{l-1}$.

\item Case 3: otherwise, we must have 
$\varepsilon_i(D_0') = 2$ since $D_0$
is a root.
The last columns of $D_0, \dots, D_l$ are the same,
and $D_0', \dots, D_l'$ is an $i$-string without the root
and the second element. 
By the inductive hypothesis,
$a_k = a_1$, $b_k = b_1$ and $c_k = c_1$
for $1 \leq k < l$,
so $u_0 = u_1 = \cdots = u_{l-1}$. \qedhere 
\end{itemize}
\end{proof}

Given $v \in S_n$, let $1^m \times v \in S_{m+n}$
denote the permutation that fixes each $i \in [m]$ and sends $m+i \mapsto m+v(i)$ for $i \in [n]$.
The group $S_n\times S_n$ acts on $S_{m+n}$ by $(u,v) : w\mapsto u\cdot w\cdot (1^m\times v^{-1})$.
The elements in the orbit of $w$ under this action 
are 
obtained from each other by permuting both the values $1,2, \dots, n$ and the last $n$ positions in the one-line representation of $w$.

\begin{defn}\label{sigma-def2}
For $D \in \RPD_{m,n}$,
let $\bar\sigma_D \in S_{m+n}$
 be the unique element in
 the $(S_n\times S_n)$-orbit of $\sigma_D$ satisfying   
$ \bar\sigma^{-1}_D(1)
 >
  \bar\sigma^{-1}_D(2)
  >\dots>
   \bar\sigma^{-1}_D(n)
$
and
$
\bar\sigma_D(m+1)
>
\bar\sigma_D(m+2)
>
\dots>
\bar\sigma_D(m+n).
$
\end{defn}

Recall that the $\sqgln$-crystal structure on $\RPD_{m,n}$ lets us view this set as a directed graph
whose edges correspond to the $f_i$ operators.

\begin{cor}\label{sigma-cor}
    If $D_1,D_2 \in \RPD_{m,n}$ are in the same connected component then $\bar\sigma_{D_1} = \bar\sigma_{D_2}$.
\end{cor}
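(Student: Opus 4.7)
The plan is to reduce the corollary to the statement that $\bar\sigma_D$ is constant along every $i$-string of the crystal graph of $\RPD_{m,n}$. Since any two RPDs in the same weakly connected component are joined by a sequence of crystal edges and each such edge lies within some $i$-string, this reduction suffices. I therefore fix $i \in [n-1]$ and an $i$-string $D_0, D_1, \dots, D_l$ with associated permutations $u_j = \sigma_{D_j}$, and aim to show $\bar\sigma_{D_0} = \bar\sigma_{D_1} = \dots = \bar\sigma_{D_l}$.

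The main step is to prove that all of $u_0, u_1, \dots, u_l$ lie in a single $(S_n \times S_n)$-orbit under the action $(u,v)\cdot w = u\cdot w \cdot (1^m \times v^{-1})$ from Definition~\ref{sigma-def2}. The degenerate $l=0$ case of Proposition~\ref{P: string and Permutation} is immediate. In the main case, part (c) of that proposition gives $u_1 = u_2 = \dots = u_{l-1}$ with this common value lying in $\{u_0, u_0 s_{m+i}\} \cap \{u_l, s_i u_l\}$. Since $s_{m+i} = 1^m \times s_i$ as elements of $S_{m+n}$, right-multiplication by $s_{m+i}$ corresponds to acting by the position-factor element $(1, s_i) \in S_n \times S_n$, while left-multiplication by $s_i$ corresponds to acting by the value-factor element $(s_i, 1)$. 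Hence each consecutive pair $u_j, u_{j+1}$ lies in a common $(S_n \times S_n)$-orbit, so all the $u_j$ share one orbit. By the uniqueness of the distinguished representative in Definition~\ref{sigma-def2}, this yields $\bar\sigma_{D_0} = \dots = \bar\sigma_{D_l}$ and hence the corollary.

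The heavy lifting has already been carried out in Proposition~\ref{P: string and Permutation}, which tracks in detail how the pipe-dream permutation changes along an $i$-string. Given that result, no serious obstacle remains here: the only thing to verify is the correct matching of left-multiplication by $s_i$ with the value factor of $S_n \times S_n$ and right-multiplication by $s_{m+i}$ with the position factor, which follows directly from the definitions.
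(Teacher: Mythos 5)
Your proposal is correct and follows the same route as the paper: reduce to constancy of $\bar\sigma_D$ along a single $i$-string and invoke Proposition~\ref{P: string and Permutation}(c), noting that right multiplication by $s_{m+i}=1^m\times s_i$ and left multiplication by $s_i$ both stay within one $(S_n\times S_n)$-orbit. The paper's proof is just a terser version of this same argument.
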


\begin{proof}
It suffices to show that $\sigma_{D_1}$ and $\sigma_{D_2}$ are in the same $(S_n\times S_n)$-orbit when $D_1$ and $D_2$ belong to the same $i$-string in $\RPD_{m,n}$, and this is clear from part (c) of Proposition~\ref{P: string and Permutation}.
\end{proof}

The $\sqgln$-crystal weight of $D \in \RPD_{m, n}$ is the tuple
$\weight(D) \in \NN^n$
whose $i^\textsuperscript{th}$ entry is
the number of $\bumptile$ tiles in row $i$ of $D$,
and for convenience we define
\begin{equation}\textstyle\bumpchi_{m,n,w} := \sum_{D \in \RPD_{m,n}(w)} x^{\wt(D)}.\end{equation}
\begin{rem}
One might consider the alternative weight
$\crosswt(D)   \in \NN^n$ given by the sequence whose $i^\textsuperscript{th}$ entry is
the number of $\ptile$ tiles in row $i$ of $D$.
The related generating function
\begin{equation}\textstyle\crosschi_{m,n,w} := \sum_{D \in \RPD_{m,n}(w)} x^{\crosswt(D)}\end{equation} turns out to be closely connected to the \definition{Grothendieck polynomial} $\fG_w$ studied in \cite{Buch2002,FK,LenartTr},
which is a non-symmetric generalization of $G_w(x_1,x_2,\dots,x_n)$
from Section~\ref{pi-sect}.

Here, we define $\fG_w$ for $w \in S_\infty$ recursively by setting 
$\fG_{n\cdots 321} := x_{1}^{n-1} x_2^{n-2} \cdots x_{n-1}$ for each $n \in \PP$,
and then requiring that
$\partial_i^K(\fG_w) = \fG_{w s_i}$ for each $i \in \PP$
with $w(i) > w(i+1)$.
This definition omits signs that are often included in the literature, exactly as in Remark~\ref{beta-rem}.
The polynomials $\fG_w$ are significant in geometry as $K$-theory representatives for the structure sheaves of the Schubert varieties in the complete variety; see \cite[\S2]{Buch2002}.

 Assume that $w \in S_{m+n}$ satisfies $n<w(m+1)<\dots<w(m+n)$.
 Then the pipe dreams for $w^{-1}$ in the conventions of \cite{KnutsonMiller2004,KnutsonMiller} never have crossings past column $m$. 
 It therefore follows by examining
\cite[Cor.~5.4]{KnutsonMiller2004}
(and adjusting signs appropriately)  
that 
\begin{equation}
    \crosschi_{m,n,w} = \fG_w|_{x_{n+1}=x_{n+2}=\dots=0}
\end{equation}
Thus $\crosschi_{m,n,w}$ is just $\fG_w$ restricted to $x_1,x_2,\dots,x_n$.
Using a sign-adjusted version of the transition formula \cite[Cor.~3.10]{LenartTr}, one can prove by induction that 
in this case $\crosschi_{m,n,w}$ is $\fG$-positive in the sense of being a $\NN$-linear combination of Grothendieck polynomials.
 \end{rem}

\subsection{Demazure analogues and Lascoux positivity}

Suppose $\cC$ is a finite normal $\sqgln$-crystal and $w \in S_n$. We wish to define a subset $\cC_w\subseteq \cC$  analogous to the Demazure crystal $\cB_w$ discussed at the beginning of this section. 
The definition of $\fkD_i$ given in \eqref{fkD-eq} still makes sense for $\sqgln$-crystals, 
but the set 
$\fkD_{i_1}\fkD_{i_2}\cdots \fkD_{i_k}(\HW(\cC))$
now depends on the choice of reduced word $i_1i_2\cdots i_k$ for $w$. We shall instead define $\cC_w$ in a different way. 

We assume that $\cC\subseteq \RPD_{m,n}$ for some fixed $m \in \PP$. This restriction is fairly mild, since any finite normal $\sqgln$-crystal can be embedded in  $\RPD_{m,n}$ for some sufficiently large $m$. However, unlike the $\gl_n$-case, it is not clear that such an embedding is unique (up to permutation of isomorphic components).
   Therefore it will not be  immediately apparent how to upgrade our definition to a canonical one for arbitrary normal crystals.

Because $\cC$ is a set of rectangular pipe dreams, Definition~\ref{sigma-def2} associates to each $c \in \cC$ a  permutation $\bar \sigma_c \in S_{m+n}$,
which is constant on connected components by Corollary~\ref{sigma-cor}.

\begin{defn}
Write $\leq$ for the \definition{(strong) Bruhat order} on $S_n$.
Then define
\[
\cC_w = \left\{ c \in \cC :
\sigma_c = u \cdot \bar\sigma_c\cdot (1^m\times v)
\text{ for some $u,v \in S_n$ with $v\leq w$}\right\}.\]
\end{defn}

This set is not always a full subcrystal of $\cC$, although $\cC = \cC_{w_0}$ for $w_0=n\cdots 321 \in S_n$.
Thus $\cC_w$ is not necessarily a $\sqgln$-crystal. However, 
we define the character of $\cC_w$ in the usual way
to be the polynomial
$\ch(\cC_w) = \sum_{c \in \cC_w} x^{\weight(c)} \in \ZZ[x_1,x_2,\dots,x_n]$.

One reason for considering $\cC_w$ to be an interesting analogue of the Demazure crystal $\cB_w$ is because both sets share the nontrivial properties listed in the following result. 
\begin{thm}\label{Cw-thm}
    Suppose $i \in [n-1]$ and $w \in S_n$ are such that $w(i) <w(i+1)$. Then
    \[
    \cC_{ws_i} = \fkD_i (\cC_w)
    \quand
    \ch(\cC_{ws_i}) = \pi^K_i (\ch(\cC_w)).
    \]
\end{thm}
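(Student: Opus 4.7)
The plan is to reduce both identities to an analysis on individual $i$-strings of $\cC$. By Corollary~\ref{sigma-cor}, $\bar\sigma_c$ is constant on each connected component of $\cC$, hence on each $i$-string. Fix such an $i$-string $D_0 \to D_1 \to \cdots \to D_l$ and write $\sigma_{D_j} = a_j \bar\sigma (1^m \times b_j)$ with $a_j, b_j \in S_n$ and $\bar\sigma = \bar\sigma_{D_0}$. Since right-multiplication of $\sigma_{D_j}$ by $s_{m+i}$ corresponds to replacing $b_j$ by $b_j s_i$ (and leaves $a_j$ fixed), while left-multiplication by $s_i$ only affects $a_j$, Proposition~\ref{P: string and Permutation}(c) translates into the statement $b_1 = b_2 = \cdots = b_l \in \{b_0, b_0 s_i\}$.

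I will then invoke the Bruhat-order lifting property: for $w \in S_n$ with $w < ws_i$ and any $v \in S_n$, one has $v \leq ws_i \Leftrightarrow v \leq w$ when $v(i) < v(i+1)$, and $v \leq ws_i \Leftrightarrow vs_i \leq w$ when $v(i) > v(i+1)$. Applying this to $v \in \{b_0, b_0 s_i\}$ and splitting into subcases according to the sign of $b_0(i+1) - b_0(i)$ directly determines which $D_j$ lie in $\cC_w$ and in $\cC_{ws_i}$; verifying $\cC_{ws_i} \cap \{D_0, \ldots, D_l\} = \fkD_i(\cC_w) \cap \{D_0, \ldots, D_l\}$ then reduces to checking that the latter set is exactly the $e_i$-downward closure (toward the sink) of $\cC_w$ on the string.

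For the character identity, I will use the weight formula following from Definition~\ref{sqgln-def}(b): if $\weight(D_0) = \alpha$, then $\weight(D_j) = \alpha + \lceil j/2\rceil \e_{i+1} - \lfloor j/2\rfloor \e_i$ and $l = 2(\alpha_i - \alpha_{i+1})$. A direct computation shows
\[
\pi^K_i(x^\alpha) = \sum_{j=0}^l x^{\weight(D_j)},
\]
so $\pi^K_i$ applied to the starting monomial reproduces the full $i$-string character. Combined with the set identity and the idempotency $\pi^K_i \pi^K_i = \pi^K_i$ from \eqref{dd-eq}, the character identity follows by examining the possible shapes of $\cC_w \cap \{D_0,\ldots,D_l\}$ (empty, the whole string, or $\{D_0\}$).

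The main obstacle is ruling out a potentially problematic configuration: if $b_0(i) > b_0(i+1)$ at the root of a nontrivial $i$-string and $b_0 s_i \leq w$ but $b_0 \not\leq w$, then $D_0$ would belong to $\cC_{ws_i}$ yet not to $\fkD_i(\cC_w)$, breaking the set identity. To exclude this, I will combine Proposition~\ref{P: string and Permutation}(a) with the structure of $\bar\sigma$: the descent $\sigma_{D_0}(m+i) > \sigma_{D_0}(m+i+1)$ reads as $a_0(\bar\sigma(m+b_0(i))) > a_0(\bar\sigma(m+b_0(i+1)))$, and since $\bar\sigma$ is decreasing on $\{m+1,\ldots,m+n\}$, the hypothesis $b_0(i) > b_0(i+1)$ forces $\bar\sigma(m+b_0(i)) < \bar\sigma(m+b_0(i+1))$. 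As $a_0 \in S_n$ fixes everything outside $[n]$, inverting this pair requires both values to lie in $[n]$, imposing strong constraints on $\bar\sigma$ that, together with Proposition~\ref{P: string and Permutation}(b) applied at the sink $D_l$, should yield the required contradiction.
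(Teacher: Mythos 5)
Your overall strategy is the same as the paper's: reduce both identities to a single $i$-string, use Proposition~\ref{P: string and Permutation} to control the permutations along the string, use the lifting property of Bruhat order, and verify $\pi^K_i(x^{\weight(D_0)})=\sum_{j=0}^l x^{\weight(D_j)}$ directly (this is exactly Lemma~\ref{L: Las op}, and your weight formula along the string is correct). The character half of your argument is fine.

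The gap is in the set identity, at precisely the point you flag as the ``main obstacle,'' and your proposed resolution would not work. Membership in $\cC_w$ is an \emph{existential} condition over all factorizations $\sigma_c=u\cdot\bar\sigma_c\cdot(1^m\times v)$, and such factorizations are genuinely non-unique; so fixing one choice of $(a_j,b_j)$ and ``splitting into subcases by the sign of $b_0(i+1)-b_0(i)$'' does not by itself determine which $D_j$ lie in $\cC_w$ or $\cC_{ws_i}$. You correctly extract the key facts: if a root factorization has $b_0(i)>b_0(i+1)$, then $\bar\sigma(m+b_0(i))<\bar\sigma(m+b_0(i+1))$ while $\sigma_{D_0}(m+i)>\sigma_{D_0}(m+i+1)$, and since $a_0$ fixes all values outside $[n]$, both values $\bar\sigma(m+b_0(i))$ and $\bar\sigma(m+b_0(i+1))$ must lie in $[n]$. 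But the correct conclusion from this is \emph{not} a contradiction --- the configuration you describe can occur and cannot be ruled out, and Proposition~\ref{P: string and Permutation}(b) at the sink gives you nothing here. The right move is constructive: because both values lie in $[n]$, the transposition $t$ swapping them belongs to $S_n$, and absorbing it into the left factor yields a second factorization $\sigma_{D_0}=(a_0t)\cdot\bar\sigma\cdot(1^m\times b_0 s_i)$. Since $b_0s_i\leq w$ by lifting, this shows $D_0\in\cC_w$ after all, so the feared element of $\cC_{ws_i}\setminus\fkD_i(\cC_w)$ never arises. This re-factorization (``we may assume $v(i)<v(i+1)$'') is the crux of the paper's proof, and it is also what legitimizes your subcase analysis for the forward inclusion; without it the identity $\cC_{ws_i}=\fkD_i(\cC_w)$ at the roots of strings is not established. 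Two smaller points: Proposition~\ref{P: string and Permutation}(c) only gives $b_1=\cdots=b_{l-1}$, with $\sigma_{D_l}$ possibly differing by a left $s_i$ (harmless, since a left $s_i$ can be absorbed without changing the right factor); and the degenerate string with $l=0$ (rows $i,i+1$ all crossings), where part (a) gives an ascent rather than a descent, needs the separate observation that there $\sigma_{D_0}(m+i)=i$ and $\sigma_{D_0}(m+i+1)=i+1$ lie in $[n]$ automatically, so the same absorption argument applies.
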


\begin{rem}\label{lasc-rem}
Because $\fkD_i^2 = \fkD_i$, this theorem implies that 
 for any $w \in S_n$ with a reduced word $i_1i_2\cdots i_k$,
we have these analogues of~\eqref{fkDfkD-eq} and~\eqref{EQ: Demazure character}:
\begin{equation}
\cC_w = \fkD_{i_1}\fkD_{i_2}\cdots \fkD_{i_k}(\cC_{\textrm{id}})
\quand
\ch(\cC_w) =  \pi^K_{i_1}\pi^K_{i_2}\cdots \pi^K_{i_k} \ch(\cC_\textrm{id})
.\end{equation}
\end{rem}

Before proving this result, let us mention our second motivation for considering $\cC_w$, which comes from the following conjectural positivity property.
A polynomial is \definition{Lascoux positive} if it is a $\NN$-linear combination of Lascoux polynomials $\fL_\alpha$.

\begin{conj}
    For any  $\sqgln$-crystal $\cC \subseteq \RPD_{m,n}$ it holds that $\ch(\cC_w)$ is Lascoux positive.
\end{conj}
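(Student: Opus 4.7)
My proof plan proceeds by induction on the Bruhat length $\ell(w)$ of $w \in S_n$, starting from $w = \mathrm{id}$ and ascending to $w_0$. The inductive step is essentially automatic once the base case is in hand: if $w(i) < w(i+1)$, Theorem~\ref{Cw-thm} yields $\ch(\cC_{ws_i}) = \pi^K_i(\ch(\cC_w))$, and the defining recursion \eqref{kappa-eq} shows that $\pi^K_i(\fL_\alpha)$ equals either $\fL_\alpha$ (when $\alpha_i \le \alpha_{i+1}$) or $\fL_{\alpha s_i}$ (when $\alpha_i > \alpha_{i+1}$). In either case the output is a single Lascoux polynomial, so $\pi^K_i$ preserves $\mathbb{N}$-linear combinations of $\fL_\alpha$'s coefficient-wise. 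This reduces the entire conjecture to Lascoux positivity of $\ch(\cC_{\mathrm{id}})$.

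The base case is the crux of the argument. Unpacking the definitions, $c \in \cC_{\mathrm{id}}$ precisely when $\sigma_c = u \cdot \bar\sigma_c$ for some $u \in S_n$, meaning the values occupying the last $n$ positions of $\sigma_c$ are already in strict descending order. Motivated by the classical analogy $\cB_{\mathrm{id}} = \HW(\cB)$, I would aim to establish a clean decomposition
\[
\ch(\cC_{\mathrm{id}}) \;=\; \sum_{b \in \HW(\cC)} \fL_{\alpha(b)}
\]
indexed by highest weight elements of $\cC$, for some weak composition $\alpha(b)$ satisfying $\rev(\alpha(b)) = \weight(b)$. This conjectural formula is consistent with Theorem~\ref{ch-thm}: composing the full $\pi^K_{w_0}$ with both sides (through the inductive step) would yield $\ch(\cC_{w_0}) = \ch(\cC) = \sum_b G_{\weight(b)}$, matching the known expansion since $G_\lambda = \fL_{\rev(\lambda)}$.

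To construct such a decomposition I would attempt to refine the Hecke-insertion bijection of Theorem~\ref{main-thm}, which identifies $\cC$ with pairs $(\Tab(b), Q)$ where $b \in \HW(\cC)$ and $Q$ is a semistandard set-valued tableau of shape $\weight(b)$. The plan is to characterize which $Q$'s correspond to elements of $\cC_{\mathrm{id}}$ via the pipe-dream realization and Proposition~\ref{P: string and Permutation}, and then biject this subfamily with a known combinatorial model enumerating $\fL_{\alpha(b)}$, for instance set-valued skyline tableaux or K-Kohnert diagrams. The main obstacle will be pinning down $\alpha(b)$ and verifying that the restricted weight-generating function equals $\fL_{\alpha(b)}$; this is where one expects delicate interaction between the RPD data controlling $\bar\sigma_c$ and the Demazure combinatorics underlying Lascoux polynomials. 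A parallel strategy worth pursuing is to prove, directly on pipe dreams, a $\sqgln$-analogue of the Demazure character formula~\eqref{EQ: Demazure character}, namely
\[
\ch(\cC_w) \;=\; \sum_{b \in \HW(\cC)} \pi^K_{i_1}\pi^K_{i_2}\cdots \pi^K_{i_k}\, x^{\alpha(b)}
\]
for any reduced word $s_{i_1}\cdots s_{i_k}$ of $w$. Such a formula would bypass the base case entirely and deliver Lascoux positivity in one stroke via \eqref{kappa-eq}.
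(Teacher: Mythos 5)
The statement you are addressing is stated in the paper as a \emph{conjecture}: the authors do not prove it, they only observe (via Remark~\ref{lasc-rem}, i.e.\ iterating Theorem~\ref{Cw-thm}) that it reduces to Lascoux positivity of $\ch(\cC_{\mathrm{id}})$, and they report computer verification of that base case for small $m,n$. Your inductive step is correct and is exactly this reduction: since $\pi^K_i$ sends each $\fL_\alpha$ to either $\fL_\alpha$ or $\fL_{\alpha s_i}$ by \eqref{kappa-eq}, Lascoux positivity propagates up reduced words once it holds at the identity. So up to that point you have reproduced what the paper already establishes, not gone beyond it.

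The genuine gap is the base case, which you correctly identify as the crux but do not prove. Your proposed decomposition $\ch(\cC_{\mathrm{id}}) = \sum_{b \in \HW(\cC)} \fL_{\alpha(b)}$ with $\rev(\alpha(b)) = \weight(b)$ is an unproven strengthening of the conjecture, and there are concrete reasons it cannot be taken for granted: unlike the $\gl_n$ case, connected normal $\sqgln$-crystals can have several highest weight elements, non-isomorphic normal $\sqgln$-crystals can share the same highest weights and characters, and $\cC_{\mathrm{id}}$ is defined through the RPD permutation data $\sigma_c$, $\bar\sigma_c$ rather than intrinsically through the crystal structure, so the embedding $\cC \subseteq \RPD_{m,n}$ (which need not be unique) genuinely enters. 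Consistency with Theorem~\ref{ch-thm} after applying $\pi^K_{w_0}$ only constrains $\alpha(b)$ to be a rearrangement of $\weight(b)$; it does not show the restricted generating function over $\cC_{\mathrm{id}}$ splits into single Lascoux polynomials indexed by highest weights, nor does your ``parallel strategy'' (a $K$-theoretic Demazure character formula on pipe dreams) come with any argument --- it is again a restatement of what needs to be proved. As written, your proposal reduces an open conjecture to another open conjecture, so it does not constitute a proof; to make progress you would need an actual combinatorial analysis of $\cC_{\mathrm{id}}$ inside an $i$-string decomposition of $\RPD_{m,n}$ (in the spirit of Proposition~\ref{P: string and Permutation}) or a bijection with a known Lascoux model, neither of which is carried out.
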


To prove this conjecture, it suffices by Remark~\ref{lasc-rem} to show that $\ch(\cC_\textrm{id})$ is Lascoux positive.
We have verified this by computer for all $m,n\leq 5$ excluding $m=n=5$.
The special case of this conjecture when $w=w_0 := n\cdots 321 \in S_n$
is a consequence of Theorem~\ref{ch-thm} since $\cC_{w_0}= \cC$.
In this sense, the conjecture is a refinement of our main theorem.

To prove Theorem~\ref{Cw-thm}, we need one lemma.

\begin{lemma}
\label{L: Las op}
Let $D_0,D_1, \cdots, D_l$ be an $i$-string in a $\sqgln$-crystal.
Then
$$
\textstyle
\pi_i^K(x^{\weight(D_0)}) = \sum_{j = 0}^{l}  x^{\weight(D_j)}
\quand
\pi_i^K\left(\sum_{j = 0}^{l}  x^{\weight(D_j)}\right) = \sum_{j = 0}^{l}  x^{\weight(D_j)}.
$$
\end{lemma}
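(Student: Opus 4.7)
The plan is to establish both identities by computing both sides explicitly and comparing. The second identity will then follow instantly from the first plus the idempotency $\pi_i^K \pi_i^K = \pi_i^K$ recorded in~\eqref{dd-eq}, so the real work is proving the first identity.

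Write $a := \weight(D_0)_i$ and $b := \weight(D_0)_{i+1}$. First I would use the axioms of Definition~\ref{sqgln-def} to track how $\weight(D_j)$ evolves along the $i$-string. Applying Definition~\ref{sqgln-def}(b) to the edge $D_j \xrightarrow{f_i} D_{j+1}$ (so that $e_i(D_{j+1}) = D_j$ and $\varepsilon_i(D_{j+1}) = j+1$), the weight decreases by $\e_i$ when $j+1$ is even and increases by $\e_{i+1}$ when $j+1$ is odd. Combined with Definition~\ref{sqgln-def}(a) applied at $D_0$ (where $\varepsilon_i(D_0) = 0$ and $\varphi_i(D_0) = l$), this forces $l = 2(a-b)$ to be even; setting $k := a-b$, the zigzag pattern on the $(i, i+1)$-components of the weights is
\[
(a,b),\ (a, b+1),\ (a-1, b+1),\ (a-1, b+2),\ \dots,\ (a-k+1, b+k),\ (a-k, b+k).
\]
Letting $M := \prod_{r \neq i, i+1} x_r^{\weight(D_0)_r}$ (a monomial independent of $x_i$ and $x_{i+1}$, since no crystal operator $f_i$ touches those variables), I obtain
\[
\sum_{j=0}^{l} x^{\weight(D_j)} \;=\; M \cdot \Bigl(\,\sum_{s=0}^{k} x_i^{a-s} x_{i+1}^{b+s} \;+\; \sum_{s=0}^{k-1} x_i^{a-s} x_{i+1}^{b+s+1}\Bigr).
\]

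Next I would compute $\pi_i^K(x^{\weight(D_0)}) = M \cdot \pi_i^K(x_i^a x_{i+1}^b)$, since $\pi_i^K$ acts only on the variables $x_i, x_{i+1}$ and $M$ is symmetric (trivially) in these. Unpacking the definitions,
\[
\pi_i^K(x_i^a x_{i+1}^b) \;=\; \partial_i\bigl((1 + x_{i+1}) \cdot x_i^{a+1} x_{i+1}^b\bigr) \;=\; \frac{x_i^{a+1} x_{i+1}^b - x_i^b x_{i+1}^{a+1}}{x_i - x_{i+1}} + \frac{x_i^{a+1} x_{i+1}^{b+1} - x_i^{b+1} x_{i+1}^{a+1}}{x_i - x_{i+1}},
\]
and each quotient expands via the standard geometric identity $\frac{u^{m+1} - v^{m+1}}{u - v} = \sum_{j=0}^{m} u^{m-j} v^j$ into, respectively, $\sum_{s=0}^{k} x_i^{a-s} x_{i+1}^{b+s}$ and $\sum_{s=0}^{k-1} x_i^{a-s} x_{i+1}^{b+s+1}$. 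This matches the bracketed expression above exactly, yielding the first identity. For the second, since $S := \sum_{j=0}^{l} x^{\weight(D_j)} = \pi_i^K(x^{\weight(D_0)})$, applying $\pi_i^K$ again gives $\pi_i^K(S) = (\pi_i^K)^2(x^{\weight(D_0)}) = \pi_i^K(x^{\weight(D_0)}) = S$ by~\eqref{dd-eq}.

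The main obstacle is really just the bookkeeping in the first step: the $\sqgln$-axioms produce a zigzag weight pattern rather than the familiar uniform pattern of a $\gl_n$-string, so one must be careful that $l$ is indeed even and that the sequence of weights on $x_i, x_{i+1}$ agrees term-by-term with the two-row expansion of $\pi_i^K(x_i^a x_{i+1}^b)$. Once the weight zigzag is correctly identified, the comparison is entirely formal.
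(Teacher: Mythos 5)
Your proposal is correct and follows essentially the same route as the paper: both use Definition~\ref{sqgln-def}(a) at the source to get $l = 2(\weight(D_0)_i - \weight(D_0)_{i+1})$, track the zigzag weight pattern down the string via Definition~\ref{sqgln-def}(b), match the resulting sum against $\pi_i^K$ of the source monomial, and get the second identity from $\pi_i^K\pi_i^K = \pi_i^K$. The only difference is that you carry out the final comparison with $\pi_i^K(x_i^a x_{i+1}^b)$ explicitly, where the paper leaves it as a check.
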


Informally, 
this lemma says that applying the operator $\pi_i^K$
to the source will yield the whole string, and 
applying this operator to the whole string 
will fix it. 
For instance, by applying the lemma to the $1$-string
in Example~\ref{E: string},
we have
$
\pi_1^K(x_1^2) = x_1^2 + x_1^2x_2 + x_1x_2 + x_1x_2^2 + x_2^2
$
and
$
\pi_1^K(x_1^2 + x_1^2x_2 + x_1x_2 + x_1x_2^2 + x_2^2) = x_1^2 + x_1^2x_2 + x_1x_2 + x_1x_2^2 + x_2^2.
$

\begin{proof}
We just prove the first equation, since the second immediately follows as $(\pi_i^K)^2=\pi_i^K$. 
As $\varphi_i'(D_{0}) = l$ and $\varepsilon_i'(D_{0}) = 0$ by hypothesis, 
Definition~\ref{sqgln-def}(a)
asserts that $\weight (D_{0})_i - \weight (D_{0})_{i+1} = \frac{l-0}{2} \geq 0$. Therefore we can write 
$ x^{\weight(D_l)} = f(x) x_i^M x_{i+1}^N 
$
for some monomial $f(x)$ not involving $x_i$ or $x_{i+1}$ and some integers $M \geq N\geq 0$ with $l =2(M-N)$.
Then we have
$$ 
\sum_{j=0}^l x^{\weight(D_j)}
=
f(x)\left(  x_i^{M}x_{i+1}^N + x_i^{M}x_{i+1}^{N+1} + x_i^{M-1}x_{i+1}^{N+1}+ x_i^{M-1}x_{i+1}^{N+2} + \dots + 
  x_i^{N+1}x_{i+1}^{M} + x_i^{N}x_{i+1}^{M} \right)
$$
by Definition~\ref{sqgln-def}(b), and one can check that the right side is $\pi_i^K(x^{\weight(D_0)})=f(x)\pi_i^K(x_i^M x_{i+1}^N)$. 
\end{proof}

We now prove Theorem~\ref{Cw-thm}
using use Proposition~\ref{P: string and Permutation} and Lemma~\ref{L: Las op}.

\begin{proof}[Proof of Theorem~\ref{Cw-thm}]

Let $D_0,D_1, \cdots, D_l$ be an $i$-string
in $\RPD_{m,n}$. 
Then define 
\[\textstyle A := \cC(w) \cap \{D_0, D_1,\cdots, D_l\}
\quand B := \cC(ws_i) \cap \{D_0,D_1, \cdots, D_l\}.\]
It suffices to show that if $A$ and $B$ are not both empty, then 
$B = \fkD_i(A)$ and
\begin{equation}\label{oth-eq} \textstyle
\pi_i^K\left(\sum_{D \in A} x^{\bumpwt(D)}\right)
= \sum_{D \in B} x^{\bumpwt(D)}.\end{equation} 

Let $\sigma = \sigma_{D_0}$
and
$\bar\sigma = \bar\sigma_{D_0}$.
Suppose $u, v \in S_n$ 
are such that $\sigma = u \cdot \overline{\sigma} \cdot (1^m\times v)$.
In view of Proposition~\ref{P: string and Permutation},
we know that each $\sigma_{D_i}$
for $ i \in [l]$
has the form $u' \cdot\overline{\sigma} \cdot(1^m \times v)$
or $u' \cdot\overline{\sigma} \cdot (1^m \times vs_i)$ for some $u' \in S_n$.
If we cannot find such any $v$ satisfying $v \leq w s_i$,
then $A = B = \emptyset$.
As there is nothing to show in this case, we may assume that $v\leq ws_i$.

We claim that we may further assume that $v(i) < v(i+1)$.
Suppose not. 
Let 
\[\tau = u^{-1} \cdot \sigma=\overline{\sigma} \cdot (1^m\times v).\]
By assumption, $\overline{\sigma}(m+1) > \cdots > \overline{\sigma}(m+n)$.
Since $v(i) > v(i+1)$,
we have 
\[(1^m\times v)(m+i) > (1^m\times v)(m+i+1) >m\]
so
$\tau(m+i) < \tau(m+i+1)$.
On the other hand,
$\sigma(m + i) > \sigma(m + i + 1)$ by Proposition~\ref{P: string and Permutation}.
The left action of $u^{-1}$ on $\sigma$
only permutes the values $1, \dots, n$
so $\tau(m + i)$
and $\tau(m + i + 1)$ must be in $[n]$.
Finally notice
if $t \in S_{m+n}$ is the transposition that 
swaps 
$\tau(m + i)$ and $\tau(m + i+1)$, then
$$
t \cdot \tau
= \tau\cdot s_{m + i}
\qquad\text{and therefore} 
\qquad
t \cdot u^{-1} \cdot \sigma
= \overline{\sigma} \cdot (1^m\times v s_i) .
$$
Thus, if we replace $u$ by $ut$
and $v$ by $vs_i$,
then $\sigma = u \cdot \overline{\sigma} \cdot (1^m\times v)$
still holds
but now $v(i) < v(i+1)$.
The Bruhat inequality $v \leq ws_i$ also still holds after this replacement, since if $v(i)>v(i+1)$ then $vs_i \leq v$.

Now, as we may assume that  $v(i) <v(i+1)$, $w(i) < w(i+1)$,
and $v \leq w s_i$,
the \definition{Lifting Property} of the Bruhat order (see \cite[\S2.2]{CCG})
implies that both $v \leq w$ and $v s_i \leq w s_i$.
The first inequality tells us that $D_0 \in A$ and the two inequalities together imply that $A \subseteq B$.

To finish the proof, we consider whether or not $\sigma_{D_1}$ belongs to $A$. There are two cases:
\begin{itemize}
    \item Suppose $D_1 \in A$,
so that $\sigma_{D_1} = u' \cdot \overline{\sigma}\cdot  (1^m\times v')$
for some $u', v' \in S_n$ with $v' \leq w$.
By Proposition~\ref{P: string and Permutation},
$\sigma_{D_2}, \dots, \sigma_{D_l}$
all agree with $\sigma_{D_1}$ or $s_i\sigma_{D_1}$,
so $\{D_1, \cdots, D_l\} \subseteq A \subseteq B$.

\item
Suppose instead that $D_1 \notin A$. Then $\sigma_{D_1} \neq \sigma$
so  Proposition~\ref{P: string and Permutation} tells us that
$\sigma_{D_1} = \sigma s_{m+i} = u \cdot \overline{\sigma} \cdot (1^m\times v s_{i})$.
Since $vs_i \leq w s_i$, it follows that
$D_1 \in B$. 
As it again holds that  $\sigma_{D_2}, \dots, \sigma_{D_l}$
all agree with $\sigma_{D_1}$ or $s_i\sigma_{D_1}$,
we deduce that $\{D_1, \cdots, D_l\}\subseteq  B$.
\end{itemize}
From this case analysis, we see that $B = \{D_0,D_1, \dots, D_l\}$
and that $A = B$ or $A=\{D_0\}$.
Clearly $A = \fkD_i(B)$ and the remaining identity \eqref{oth-eq} follows from Lemma~\ref{L: Las op}.
\end{proof}

\bibliographystyle{alpha}
\bibliography{citation}{}
\end{document}